\documentclass[12pt]{amsart}
\usepackage{amssymb,amsmath, amsthm,latexsym,verbatim,stmaryrd}
\usepackage{a4wide}
\usepackage[hypertex]{hyperref}
\usepackage{enumitem}

\theoremstyle{plain}
\newtheorem{theorem}{Theorem}
\newtheorem{lem}{Lemma}[section]
\newtheorem{theo}[lem]{Theorem}
\newtheorem{prop}[lem]{Proposition}
\newtheorem{corollary}[lem]{Corollary}

\theoremstyle{definition}

\theoremstyle{remark}
\newtheorem {bmrk}[theorem] {Remark}

\newcommand{\C}{\mathbb{C}}
\newcommand{\R}{\mathbb{R}}
\newcommand{\Z}{\mathbb{Z}}

\newcommand{\N}{\mathbb{N}}

\newcommand{\cH}{\mathcal{H}}
\newcommand{\soL}{\mathfrak{so}}
\newcommand{\slL}{\mathfrak{sl}}

\newcommand{\gL}{\mathfrak{g}}
\newcommand{\kL}{\mathfrak{k}}
\newcommand{\mL}{\mathfrak{m}}
\newcommand{\aL}{\mathfrak{a}}
\newcommand{\pL}{\mathfrak{p}}
\newcommand{\qL}{\mathfrak{q}}
\newcommand{\nL}{\mathfrak{n}}
\newcommand{\hL}{\mathfrak{h}}
\newcommand{\bL}{\mathfrak{b}}
\newcommand{\tL}{\mathfrak{t}}

\newcommand{\Real}{\operatorname{Re}}
\newcommand{\Spin}{\operatorname{Spin}}

\newcommand{\SO}{\operatorname{SO}}
\newcommand{\Tr}{\operatorname{Tr}}
\newcommand{\tr}{\operatorname{tr}}
\newcommand{\Id}{\operatorname{Id}}
\newcommand{\Ad}{\operatorname{Ad}}

\newcommand{\vol}{\operatorname{vol}}
\newcommand{\rk}{\operatorname{rank}}
\newcommand{\End}{\operatorname{End}}
\newcommand{\Hom}{\operatorname{Hom}}
\newcommand{\GL}{\operatorname{GL}}
\newcommand{\SL}{\operatorname{SL}}
\newcommand{\Ind}{\operatorname{Ind}}
\newcommand{\bs}{\backslash}

\newcommand{\diag}{\operatorname{diag}}

\newcommand{\Rep}{\operatorname{Rep}}
\renewcommand{\Re}{\operatorname{Re}}

\setcounter{equation}{0}
\setcounter{section}{0}
\parindent=0.3cm

\begin{document}

\title[]
{Analytic torsion and $L^2$-torsion of compact locally symmetric manifolds}
\date{\today}

\author{Werner M\"uller}
\address{Universit\"at Bonn\\
Mathematisches Institut\\
Endenicher Allee 60\\
D--53115 Bonn, Germany\\}
\email{mueller@math.uni-bonn.de}

\author{Jonathan Pfaff}
\address{Universit\"at Bonn\\
Mathematisches Institut\\
Endenicher Alle 60\\
D--53115 Bonn, Germany\\}
\email{pfaff@math.uni-bonn.de}

\keywords{analytic torsion, locally symmetric manifolds}
\subjclass{Primary: 58J52, Secondary: 53C53}

\begin{abstract}
In this paper we study the analytic torsion and the $L^2$-torsion of compact
locally symmetric manifolds. We consider the analytic torsion with respect to
representations of the fundamental group which are obtained by restriction of
irreducible representations of the group of isometries of the underlying
 symmetric space.
The main purpose is to study the asymptotic behavior of the analytic torsion
with respect to  sequences of representations associated to rays of highest 
weights.
\end{abstract}

\maketitle
\setcounter{tocdepth}{1}
\section{Introduction}
\setcounter{equation}{0}

Let $G$ be a real, connected, linear semisimple Lie group 
with finite center and of  noncompact type. 
Let $K\subset G$ be a maximal compact subgroup. Then $\widetilde X=G/K$ is a 
Riemannian symmetric space of the noncompact type. 
Let $\Gamma\subset G$ be a discrete, torsion free, co-compact subgroup. Then 
$X=\Gamma\bs \widetilde X$ is a compact oriented locally symmetric manifold. 
Let $d=\dim X$. Let $\tau$ be a finite-dimensional irreducible representation
of $G$ on a complex vector space $V_\tau$. Denote by $E_\tau$ the flat vector
bundle over $X$ associated to the representation $\tau|_\Gamma$ of $\Gamma$. 
By \cite[Lemma 3.1]{MtM}, $E_\tau$ can be equipped with a distinguished 
Hermitian
fiber metric, called admissible. Let $\Delta_p(\tau)$ be the Laplace operator
acting on $E_\tau$-valued $p$-forms on $X$. Denote by  
$\zeta_p(s;\tau)$ the zeta function of $\Delta_p(\tau)$ (see \cite{Sh}). Then
the analytic torsion $T_X(\tau)\in\R^+$ is defined by
\begin{equation}\label{anator1}
\log T_X(\tau)=\frac{1}{2}\sum_{p=0}^d (-1)^pp\frac{d}{ds}
\zeta_p(s;\tau)\big|_{s=0}
\end{equation}
(see \cite{RS}, \cite{Mu2}). Since we have chosen distinguished metrics, we
don't
indicate the metric dependence of $T_X(\tau)$. We also consider the 
$L^2$-torsion $T_X^{(2)}(\tau)$. Following Lott
\cite{Lo} and Mathai \cite{Mat}, this torsion can be defined using the
$\Gamma$-trace of the heat operators on $\widetilde X$.

The main purpose of this paper is to study the asymptotic behavior of 
$T_X(\tau)$ and $T_X^{(2)}(\tau)$ for certain sequences of representations 
$\tau$ of $G$. This problem was first studied in \cite{Mu3} in the context of 
hyperbolic 3-manifolds.
The method used in this paper was based on the study of the twisted Ruelle 
zeta function. 
In \cite{MP} we have developed a different and simpler method which we
used to extend the results of \cite{Mu3} to compact hyperbolic manifolds 
of any dimension. In the present paper, we generalize the 
results of the previous papers to arbitrary compact locally symmetric spaces. 
Recently, Bismut, Ma, and Zhang \cite{BMZ1}, \cite{BMZ2}, studied the 
asymptotic behavior of 
the analytic torsion by a different method and in the more general context of 
analytic torsion forms on arbitrary compact manifolds. Furthermore, 
Bergeron and Venkatesh \cite{BV} studied the asymptotic behavior of the 
analytic torsion if the flat bundle is kept fixed, but the discrete 
group varies in a tower $\{\Gamma_N\}_{N\in\N}$ of normal subgroups of finite 
index of $\Gamma$. They used this to study the growth of the torsion subgroup
in the cohomology of arithmetic groups. In \cite{MaM} the results of
\cite{Mu3} have been used to study the growth of the torsion in
the cohomology of arithmetic hyperbolic 3-manifolds, if the lattice is kept
fixed and the flat bundle varies. The results of the present paper will be used
to study the growth of the torsion in the cohomology of arithmetic groups in
higher rank cases.  

Now we explain our results in more detail. 
Let $\delta(\widetilde X)=\rk_\C(G)-\rk_\C(K)$. Occasionally we will  
denote this number by $\delta(G)$. Let $\gL$ be the Lie
algebra of $G$. Let
$\hL\subset
\gL$ be a fundamental Cartan subalgebra. Let $G_\C$ denote the connected complex
linear Lie group
corresponding to the complexification $\gL_\C$ of $\gL$ and let 
$U$ be a compact real form of $G_\C$ such that $\hL_\C$ is the complexification 
of a Cartan-subalgebra of $U$. Then the 
irreducible finite dimensional complex representations of $G$ can be
identified with 
the irreducible finite dimensional complex representations of $U$.  
Fix positive roots
$\Delta^+(\gL_\C,\hL_\C)$. Let $\theta\colon\gL\to\gL$ be the Cartan
involution. For a highest weight $\lambda\in\hL^*_\C$ which we always assume 
to be analytically integral with respect to $U$ we let
$\tau_\lambda$ be the irreducible representation 
of $G$ corresponding to the representation of $U$ with highest weight
$\lambda$. 
We will also say that $\tau_\lambda$ is the representation of $G$ of highest
weight $\lambda$. 
Then
we denote by $\lambda_\theta\in\hL^*_\C$ the highest weight of
$\tau_\lambda\circ\theta$,
where 
we regard $\theta$ as an involution on $G$.
Our main result is the following theorem.
\begin{theo}\label{th-main1}
$\mathrm{(i)}$ Let $\widetilde X$ be even dimensional or let $\delta(\widetilde
X)\neq 1$. Then $T_X(\tau)=1$ for all
finite-dimensional representations $\tau$ of $G$.

\noindent
$\mathrm{(ii)} $Let $\widetilde X$ be odd-dimensional with $\delta(\widetilde
X)=1$. Let $\lambda\in\hL^*_\C$ be a 
highest weight with $\lambda_\theta\neq\lambda$. For $m\in\N$ let 
$\tau_\lambda(m)$ be the irreducible representation of $G$ with highest weight 
$m\lambda$. There exist constants $c>0$ and $C_{\widetilde X}\neq 0$, which 
depends on $\widetilde X$, and a polynomial $P_\lambda(m)$, which depends on 
$\lambda$, such that
\begin{equation}\label{asymptor1}
\log T_X(\tau_\lambda(m))=C_{\widetilde X}\vol(X)\cdot P_\lambda(m)
+O\left(e^{-cm}\right)
\end{equation}
as $m\to\infty$. Furthermore, there is a constant $C_\lambda>0$ such that
\begin{equation}\label{polyn1}
P_\lambda(m)=C_\lambda\cdot m\dim(\tau_\lambda(m))+R_\lambda(m),
\end{equation}
where $R_\lambda(m)$ is a polynomial whose degree equals the degree of
the polynomial $\dim(\tau_\lambda(m))$.
\end{theo}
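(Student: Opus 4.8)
The plan is to combine the Selberg trace formula on $\Gamma\bs G$ with Harish--Chandra's Plancherel theorem and standard bounds for semisimple orbital integrals. Writing the analytic torsion as a regularized Mellin transform, one has
\[
\log T_X(\tau)=\frac12\frac{d}{ds}\Big|_{s=0}\Big(\frac{1}{\Gamma(s)}\int_0^\infty t^{s-1}\sum_{q=1}^d(-1)^q q\,\big(\Tr e^{-t\Delta_q(\tau)}-h_q(\tau)\big)\,dt\Big),\qquad h_q(\tau)=\dim H^q(X,E_\tau).
\]
By Kuga's lemma, $\Delta_q(\tau)$ acts on a $\pi$-isotypic subspace of $(C^\infty(\Gamma\bs G)\otimes\Lambda^q\pL^*\otimes V_\tau)^K$ as the scalar $\tau(\Omega)-\pi(\Omega)$, which is \emph{independent of $q$}, so
\[
\Tr e^{-t\Delta_q(\tau)}=\sum_{\pi}m_\Gamma(\pi)\dim\Hom_K(\Lambda^q\pL^*,\mathcal H_\pi\otimes V_\tau)\,e^{-t(\tau(\Omega)-\pi(\Omega))}.
\]
The trace formula splits $\Tr e^{-t\Delta_q(\tau)}=\vol(X)k^q_t(\tau)+H^q(t,\tau)$ into the identity term (the constant local heat trace $k^q_t(\tau)$ on $\widetilde X$) and a sum of weighted orbital integrals $\mathcal O^q_\gamma(t,\tau)$ over the nontrivial semisimple classes $[\gamma]$ of $\Gamma$. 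Since the identity term is precisely what defines the $\Gamma$-trace appearing in the $L^2$-torsion of Lott \cite{Lo} and Mathai \cite{Mat}, this yields the master identity $\log T_X(\tau)=\log T_X^{(2)}(\tau)+\log T_X^{\mathrm{hyp}}(\tau)$ with $\log T_X^{(2)}(\tau)=\vol(X)\,t^{(2)}_{\widetilde X}(\tau)$ depending only on $\widetilde X$ and $\tau$.

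For part (i) with $d=\dim\widetilde X$ even, note that $\Lambda^d\pL^*$ is the trivial $K$-module (since $\operatorname{ad}(X)|_\pL$ is skew for the Killing form when $X\in\kL$), so $\Lambda^q\pL^*\cong\Lambda^{d-q}\pL^*$ as $K$-modules; together with the $q$-independence of the Kuga eigenvalue this forces $\Tr e^{-t\Delta_q(\tau)}=\Tr e^{-t\Delta_{d-q}(\tau)}$ for all $q,t$, hence $h_q(\tau)=h_{d-q}(\tau)$, whence $\sum_q(-1)^q q\,(\Tr e^{-t\Delta_q(\tau)}-h_q(\tau))=\tfrac d2\sum_q(-1)^q(\Tr e^{-t\Delta_q(\tau)}-h_q(\tau))\equiv0$ by McKean--Singer and the Euler--Poincar\'e principle, so $T_X(\tau)=1$. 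For $d$ odd with $\delta(\widetilde X)\neq1$, i.e.\ $\delta(\widetilde X)\ge3$, I expand $\sum_q(-1)^q q\,k^q_t(\tau)$ by the Plancherel theorem: for each cuspidal parabolic $P=M_PA_PN_P$, Frobenius reciprocity together with $\pL\cong\aL_P\oplus\qL_P$ (with $\aL_P$ trivial as an $M_P$-module) shows that the generating polynomial $\sum_q x^q\dim\Hom_{M_P}(\Lambda^q\pL^*,\sigma\otimes V_\tau)$ carries a factor $(1+x)^{\dim\aL_P}$; applying $x\,\partial_x$ and setting $x=-1$ — the operation producing $\sum_q(-1)^q q\,(\cdot)$ — kills every term with $\dim\aL_P\ge2$, and the rank identity $\dim\aL_P=\delta(G)+j$ with $j\ge0$ gives $\dim\aL_P\ge3$ in this case. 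Thus $\sum_q(-1)^q q\,k^q_t(\tau)\equiv0$; the same $(1+x)$-mechanism, applied to the semisimple orbital integrals, gives $\sum_q(-1)^q q\,\mathcal O^q_\gamma(t,\tau)\equiv0$, and therefore $\log T_X(\tau)=0$.

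For part (ii) the only cuspidal parabolics $P$ surviving the above are those with $\dim\aL_P=1$, which exist precisely because $\delta(\widetilde X)=1$; for these, $t^{(2)}_{\widetilde X}(\tau)$ reduces to an explicit one-dimensional integral over $\aL_P^*\cong\R$ of a polynomial built from the Plancherel density against a Gaussian in $t$. Specialising $\tau=\tau_\lambda(m)$ and carrying out this integral and the Mellin transform, and using that the Plancherel polynomial and the Casimir eigenvalue are polynomial in $m$, gives $\log T_X^{(2)}(\tau_\lambda(m))=C_{\widetilde X}\vol(X)P_\lambda(m)$ with $C_{\widetilde X}\neq0$ and $P_\lambda$ a polynomial. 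For \eqref{polyn1}: by the Weyl dimension formula $\dim\tau_\lambda(m)$ is a polynomial in $m$ of degree $|\Delta^+(\gL_\C,\hL_\C)|$, and tracking the highest-order contribution of the torsion integral shows its leading part equals $C_\lambda\,m\dim\tau_\lambda(m)$, with remainder $R_\lambda(m)$ of degree exactly $|\Delta^+(\gL_\C,\hL_\C)|$ and $C_\lambda>0$ forced by the positivity of the Plancherel density. The hypothesis $\lambda_\theta\neq\lambda$ enters through $(m\lambda)_\theta=m\lambda_\theta\neq m\lambda$, which by the Borel--Wallach computation of $(\gL,K)$-cohomology gives $H^*(X,E_{\tau_\lambda(m)})=0$, so all $h_q(\tau_\lambda(m))$ vanish and the heat traces decay exponentially as $t\to\infty$, legitimising the Mellin manipulations.

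It remains to bound $\log T_X^{\mathrm{hyp}}(\tau_\lambda(m))$. Since $H^q(t,\tau_\lambda(m))$ decays superexponentially as $t\to0$ (each closed geodesic has length $\ge\ell_0>0$, $\Gamma$ being cocompact and torsion-free) and, with $h_q=0$, exponentially as $t\to\infty$, one obtains $\log T_X^{\mathrm{hyp}}(\tau_\lambda(m))=\frac12\int_0^\infty t^{-1}\sum_{[\gamma]\neq1}\sum_q(-1)^q q\,\mathcal O^q_\gamma(t,\tau_\lambda(m))\,dt$. After the standard reduction to the centraliser of $\gamma$, each $\mathcal O^q_\gamma(t,\tau_\lambda(m))$ is controlled by the value of the character $\Theta_{\tau_\lambda(m)}$ at the hyperbolic element $\exp H_\gamma$ attached to $\gamma$; the estimate $|\Theta_{\tau_\lambda(m)}(\exp H_\gamma)|\le\dim\tau_\lambda(m)\cdot e^{-m\mu_\lambda(H_\gamma)}$ with $\mu_\lambda(H_\gamma)>0$ — here $\lambda_\theta\neq\lambda$ again ensures that the relevant weight is strictly positive on the geodesic direction, so that the leading exponential does not cancel — together with the convergence of the geodesic sum bounds the whole expression by $Ce^{-cm}$ with $c$ depending only on $\ell_0$ and $\lambda$, and the $t$-integration preserves this. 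I expect the two genuine obstacles to be the explicit Plancherel evaluation of $t^{(2)}_{\widetilde X}(\tau_\lambda(m))$ in the $\delta(\widetilde X)=1$ case, including the degree bookkeeping behind \eqref{polyn1}, and this uniform estimate for the family of orbital integrals; the vanishing statements of part (i) are comparatively routine once the trace-formula framework is in place.
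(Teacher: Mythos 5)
Your part (i) is essentially correct and close to the paper's argument. For $d$ even, your symmetry $\Lambda^q\pL^*\cong\Lambda^{d-q}\pL^*$ coupled with the $q$-independence of the Kuga eigenvalue, reducing the alternating-weighted sum to $\tfrac d2$ times the McKean--Singer supertrace, is the same mechanism the paper uses (the paper routes it through $(\gL,K)$-cohomology and observes $K(t,\tau)$ is then $t$-independent, hence equal to $h(\tau)$, but these are the same idea). For $d$ odd and $\delta\ge3$, your generating-function factor $(1+x)^{\dim\aL_P}$ giving vanishing under $x\partial_x|_{x=-1}$ is a restatement of the Euler contraction argument in the paper's Proposition \ref{equcharac}(ii); the inequality $\dim\aL_P\ge\delta(G)$ is correct, and $d$ odd with $\delta\ne1$ does force $\delta\ge3$. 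One small inefficiency: you argue separately that the hyperbolic orbital integrals vanish, which you do not justify; it is cleaner to stay on the spectral side and use, as the paper does, that the Grothendieck group of admissible representations is generated by the $\pi_{\xi,\nu}$ (Delorme), so $\Theta_\pi(k_t^\tau)=0$ for \emph{every} $\pi\in\hat G$ and therefore $K(t,\tau)=\sum_\pi m_\Gamma(\pi)\Theta_\pi(k_t^\tau)=0$ with no geometric side involved.

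Part (ii) contains a genuine gap, and it is precisely the step you flag as an expected ``obstacle.'' Your proposed bound on the hyperbolic contribution rests on the estimate
$|\Theta_{\tau_\lambda(m)}(\exp H_\gamma)|\le\dim\tau_\lambda(m)\,e^{-m\mu_\lambda(H_\gamma)}$
with $\mu_\lambda(H_\gamma)>0$. This is false: for a finite-dimensional representation of a noncompact $G$ and a hyperbolic element, the eigenvalues of $\tau_\lambda(m)(\exp H_\gamma)$ are positive reals of the form $e^{\mu(H_\gamma)}$ with $\mu$ a weight of $\tau_\lambda(m)$, and the extreme weight $m\lambda$ contributes $e^{m\lambda(H_\gamma)}$, which \emph{grows} exponentially in $m$ whenever $\lambda(H_\gamma)>0$. (In the $\SL(2,\R)$ model, $\Theta_{\Sym^m}(\diag(e^\ell,e^{-\ell}))\sim e^{m\ell}$.) Nothing in the hypothesis $\lambda_\theta\ne\lambda$ repairs this. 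Moreover the orbital integrals of the heat kernels $h_t^{\tau,p}$ are not directly expressed through $\Theta_{\tau_\lambda(m)}$ at $\gamma$ at all; you seem to be importing the Ruelle-zeta mechanism of \cite{Mu3}, but the present proof works instead with the convolution kernels and never touches $\Theta_\tau$.

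What actually makes the hyperbolic contribution (and, equally importantly, the large-$t$ part of the Mellin integral, which you treat too lightly) exponentially small in $m$ is the \emph{spectral gap} $\Delta_p(\tau_\lambda(m))\ge C_1m^2-C_2$, which your sketch never mentions. It comes from the Weitzenb\"ock decomposition $\Delta_p(\tau)=\Delta_{\nu_p(\tau)}+E_p(\tau)$ together with a lower bound $E_p(\tau_\lambda(m))\ge C_1m^2-C_2$ obtained from the Casimir eigenvalue formula and a convex-hull comparison of $K$-types; the hypothesis $\lambda_\theta\ne\lambda$ enters here, not through characters at hyperbolic elements but because the quadratic leading term in the Casimir of $\tau_\lambda(m)$ strictly dominates that of every $K$-type occurring in $\Lambda^\bullet\pL^*\otimes V_{\tau_\lambda(m)}$. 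Once the gap is in place one rescales the metric by $\sqrt m$ (legitimate because the bundle is acyclic and $d$ is odd, so $T_X$ is metric-independent), and controls the identity and hyperbolic terms on $[0,1]$ and the whole trace on $[1,\infty)$ by the combination of Kato's domination inequality $\|H_t^{\nu}(g)\|\le H_t^0(g)$, the factorization $H_t^{\tau,p}=e^{-tE_p(\tau)}H_t^{\nu_p(\tau)}$, and Donnelly's Gaussian estimate for the scalar heat kernel $H_t^0$. Vanishing of $H^*(X,E_{\tau_\lambda(m)})$ by itself, which is all you invoke, gives exponential decay in $t$ for each fixed $m$, but not the uniform $O(e^{-cm})$ rate required. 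Your outline of the $L^2$-torsion computation by the Plancherel formula is consistent with the paper, but the decisive analytic input that converts it into \eqref{asymptor1} is the spectral gap, not character asymptotics.

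Finally, for \eqref{polyn1}, your appeal to ``positivity of the Plancherel density'' does not by itself give $C_\lambda>0$; the paper's argument needs the interpolation polynomial device from Bergeron--Venkatesh to show the coefficient of $m\dim\tau_\lambda(m)$ is bounded below by $\tau_{n+1}>0$, which uses $\lambda_\theta\ne\lambda$ in a concrete way.
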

We note that \eqref{asymptor1} provides a complete asymptotic expansion for  
$\log T_X(\tau_\lambda(m))$. If one is only interested in the leading term, one 
can use \eqref{polyn1} which implies that there exists a constant 
$C=C(\widetilde X,\lambda)\neq 0$, 
which depends on $\widetilde X$ and $\lambda$, such that
\begin{equation}\label{asymptor2}
\log T_X(\tau_\lambda(m))=C\vol(X)\cdot m\dim(\tau_\lambda(m))+
O\left(\dim(\tau_\lambda(m))\right)
\end{equation}
as $m\to\infty$. Now the coefficient of the highest power can determined by
Weyl's dimension formula. 

The condition $\lambda\neq\lambda_\theta$ is essential for our method to work.
It implies the existence of an increasing  spectral gap for the corresponding
Laplace operators (see Corollary \ref{lowerbd7}). It is a challenging and
very interesting problem to extend Theorem \ref{th-main1} to the case 
$\lambda=\lambda_\theta$.

For hyperbolic manifolds, we proved the vanishing result (i) of Theorem 
\ref{th-main1} in \cite[Proposition 1.7]{MP}. 
In general it was first proved by Bismut,
Ma, and Zhang \cite{BMZ2}. It extends a  result of Moscovici and Stanton 
\cite{MS1} who showed that $T_X(\rho)=1$, if $\delta(\widetilde X)\ge 2$ and 
$\rho$ is a unitary representation of $\Gamma$. Our proof is different from the
previous proofs and, as we believe, also simpler. It does not rely on the
use of orbital integrals or the Fourier inversion formula. 

Part (ii) is a consequence of
the following two propositions. The first one shows  that  the asymptotic 
behavior of the analytic torsion with respect to the representations 
$\tau_\lambda(m)$ is determined by the asymptotic behavior of the
$L^2$-torsion. 
\begin{prop}\label{prop-l2tor1}
Let $\widetilde X$ be odd-dimensional with $\delta(\widetilde X)=1$. Let
$\lambda\in\hL^*_\C$ be a
highest weight. Assume that $\lambda_\theta\neq\lambda$. For $m\in\N$ let 
$\tau_\lambda(m)$ be the irreducible 
representation of $G$ with highest weight $m\lambda$. Then there exists $c>0$ 
such that
\begin{equation}\label{l2tor-tor}
\log T_X(\tau_\lambda(m))=\log T_X^{(2)}(\tau_\lambda(m))+O\left(e^{-cm}\right)
\end{equation}
for all $m\in\N$.
\end{prop}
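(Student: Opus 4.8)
The plan is to express $\log T_X(\tau_\lambda(m))-\log T_X^{(2)}(\tau_\lambda(m))$ as a Mellin transform of a difference of heat traces, to identify this difference via the Selberg trace formula with the contribution of the nontrivial conjugacy classes of $\Gamma$, and to show that this contribution is exponentially small in $m$ using the spectral gap of Corollary \ref{lowerbd7}. Write $\tau=\tau_\lambda(m)$, put $k^\tau(t)=\sum_{p=0}^d(-1)^pp\,\Tr(e^{-t\Delta_p(\tau)})$ and $k^\tau_\Gamma(t)=\sum_{p=0}^d(-1)^pp\,\Tr_\Gamma(e^{-t\widetilde\Delta_p(\tau)})$, where $\widetilde\Delta_p(\tau)$ is the corresponding Laplacian on $\widetilde X$ and $\Tr_\Gamma$ the $\Gamma$-trace. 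Since $\lambda_\theta\neq\lambda$, Corollary \ref{lowerbd7} yields a constant $c_0=c_0(m)>0$, which may be chosen to grow with $m$, such that for every $p$ the spectra of $\Delta_p(\tau)$ and of $\widetilde\Delta_p(\tau)$ are contained in $[c_0,\infty)$; in particular $H^p(X,E_\tau)=0$ for all $p$, the reduced $L^2$-cohomology of the corresponding flat bundle over $\widetilde X$ vanishes, and the heat-kernel formulas for the two torsions carry no zero-mode terms, so that
\begin{equation*}
2\bigl(\log T_X(\tau)-\log T_X^{(2)}(\tau)\bigr)=\frac{d}{ds}\Big|_{s=0}\frac{1}{\Gamma(s)}\int_0^\infty t^{s-1}\bigl(k^\tau(t)-\vol(X)\,k^\tau_\Gamma(t)\bigr)\,dt .
\end{equation*}

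First I would apply the method of images: the identity contribution to $\Tr(e^{-t\Delta_p(\tau)})$ is precisely $\vol(X)\Tr_\Gamma(e^{-t\widetilde\Delta_p(\tau)})$, so that $k^\tau(t)-\vol(X)k^\tau_\Gamma(t)=\sum_{[\gamma]\neq e}H^\tau_\gamma(t)$, where $H^\tau_\gamma(t)=\sum_p(-1)^pp\,O^p_\gamma(t)$ is an alternating sum of the weighted orbital integrals of the heat kernels on $\widetilde X$ attached to the conjugacy class $[\gamma]$. Because $\Gamma$ is torsion-free, each $\gamma\neq e$ is semisimple with displacement length $\ell(\gamma)\ge\ell_0>0$, where $\ell_0$ is the length of the shortest closed geodesic of $X$; hence $k^\tau(t)-\vol(X)k^\tau_\Gamma(t)$ has no small-$t$ singularity, is $O(e^{-\ell_0^2/(4t)})$ as $t\to0$, and decays exponentially as $t\to\infty$ by the spectral gap. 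Its Mellin transform is therefore entire, and since $1/\Gamma(s)=s+O(s^2)$ near $s=0$,
\begin{equation*}
2\bigl(\log T_X(\tau)-\log T_X^{(2)}(\tau)\bigr)=\int_0^\infty t^{-1}\sum_{[\gamma]\neq e}H^\tau_\gamma(t)\,dt .
\end{equation*}

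The heart of the proof is then a uniform estimate of the twisted orbital integrals. Writing $\gamma$, up to conjugacy, as a commuting product of an elliptic factor and a regular semisimple (``hyperbolic'') factor of length $\ell(\gamma)$, and expanding $O^p_\gamma(t)$ through the Harish-Chandra--Plancherel formula along the split direction, I would establish a bound of the shape
\begin{equation*}
\int_0^\infty t^{-1}\bigl|H^{\tau_\lambda(m)}_\gamma(t)\bigr|\,dt\ \le\ C\,(1+\ell(\gamma))^{N}\,\dim\bigl(\tau_\lambda(m)\bigr)\,e^{-\kappa(m)\,\ell(\gamma)},
\end{equation*}
with $C$ and $N$ depending only on $\widetilde X$: the character enters only via $|\tr\,\tau_\lambda(m)(\text{elliptic part})|\le\dim\tau_\lambda(m)$, the Plancherel density contributes a factor polynomial in $m$, and $\kappa(m)$ is a lower bound for the square root of the spectral shift of Corollary \ref{lowerbd7}, so that $\kappa(m)\to\infty$ and in fact $\kappa(m)\ge c_1m$ for some $c_1>0$ --- this growth being forced precisely by $\lambda\neq\lambda_\theta$. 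Since the number of $\Gamma$-conjugacy classes with $\ell(\gamma)\le L$ is $O(e^{\rho_0L})$ for a constant $\rho_0>0$ depending only on $\widetilde X$, the series over $[\gamma]\neq e$ converges once $c_1m>\rho_0$ and is bounded by $C'\dim(\tau_\lambda(m))\,e^{-(c_1m-\rho_0)\ell_0}$; together with the trivial bound for the finitely many remaining $m$, this gives \eqref{l2tor-tor}.

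The hard part will be this last estimate: it requires uniform-in-$m$ bounds for the twisted orbital integrals on $\widetilde X$ with the rate of exponential decay in $\ell(\gamma)$ made explicit in terms of the spectral gap, and it depends essentially on the growth of that gap --- the content of Corollary \ref{lowerbd7}, and the one place where the hypothesis $\lambda\neq\lambda_\theta$ is indispensable. If $\lambda=\lambda_\theta$ the shift need not grow, the orbital contributions cease to be exponentially small in $m$, the reduced $L^2$-cohomology may be nonzero, and the argument collapses.
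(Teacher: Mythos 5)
Your overall strategy---express $\log T_X(\tau_\lambda(m))-\log T_X^{(2)}(\tau_\lambda(m))$ as the Mellin transform of the sum over nontrivial conjugacy classes, and show that sum is exponentially small in $m$ using the growing spectral gap---is the right skeleton, and your reduction to the vanishing of cohomology via $\lambda\neq\lambda_\theta$ is correct. But the route you propose for the crucial uniform estimate differs materially from the paper's, and it is precisely the part you leave unproved.

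The paper does \emph{not} expand the nontrivial terms into weighted orbital integrals nor invoke the Harish-Chandra--Plancherel formula along split directions. Instead it combines three elementary devices. First, since the representation is acyclic, the torsion is metric-independent, so the Laplacian may be rescaled $\Delta_p\mapsto m^{-1}\Delta_p$; this turns the problem into estimating $K(t/m,\tau(m))$ and makes the $m$-dependence enter the time variable cleanly. Second, the crucial pointwise bound is Lemma \ref{Katotr}, obtained from the factorization \eqref{heatfact} of the kernel $H_t^{\tau(m),p}=e^{-tE_p(\tau(m))}\circ H_t^{\nu_p(\tau(m))}$ together with Proposition \ref{lowerbd6} ($E_p(\tau(m))\ge C_1m^2-C_2$, not merely a lower eigenvalue bound) and the Kato-type semigroup domination of Proposition \ref{Kato}: $\|H_t^{\nu}(g)\|\le H_t^0(g)$ where $H^0$ is the \emph{scalar} heat kernel on $\widetilde X$. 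This reduces everything to scalar-heat-kernel estimates, for which one uses Donnelly's Gaussian bound (Proposition \ref{esthyp}); no character values, Plancherel densities, or polynomial weights in $\ell(\gamma)$ ever appear. Third, the split of the $t$-integral at $t=1$ and the vanishing of the constant term in the small-time expansion (because $\dim X$ is odd) cleanly separate the identity contribution from the rest and let the $[1,\infty)$ piece be swallowed by the Gaussian in $m$.

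Your version pushes the entire burden onto a bound of the form $\int_0^\infty t^{-1}|H^{\tau_\lambda(m)}_\gamma(t)|\,dt\le C(1+\ell(\gamma))^N\dim(\tau_\lambda(m))e^{-\kappa(m)\ell(\gamma)}$, which you quite rightly flag as the hard part; but as stated it would require a detailed uniform analysis of weighted orbital integrals in higher rank, precisely the kind of Fourier-inversion-based argument that the authors say they wish to avoid (compare their remark about Moscovici--Stanton). The polynomial factor $(1+\ell(\gamma))^N$, the handling of the elliptic part of $\gamma$, and the needed uniformity in $m$ of the Plancherel contributions are all nontrivial and not supplied. Relative to the paper, your route would (if carried out) presumably give finer information about each conjugacy class, but at a cost in both length and generality; the paper's semigroup-domination argument is coarser per-class yet delivers the same $O(e^{-cm})$ with far less machinery and with no case analysis of the structure of $M$ or the orbital integrals. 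In short: same skeleton, different and considerably harder key lemma, and that key lemma is the gap.

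One smaller point: you assert that the spectral gap implies the contribution of $[1,\infty)$ decays exponentially in $m$ directly from "$\kappa(m)\ge c_1 m$." In the paper the mechanism is the rescaled factor $e^{-tm^2/2}$ evaluated at time $t/m$, which gives $e^{-tm/2}$, and then the integral $\int_1^\infty t^{-1}e^{-tm/4}\,dt$ is elementary; without the rescaling, just saying "spectral gap $\gtrsim m^2$" does not by itself convert into the $e^{-cm}$ error unless one argues as the paper does (or carries out your saddle-point estimate under the Gaussian in $\ell$). Worth making explicit if you pursue your route.
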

This result was first proved in \cite{MP} for hyperbolic manifolds. It was also 
proved in \cite{BMZ2} in the more general context of this paper 
(see Remark 7.8).
Our method of proof of \eqref{l2tor-tor} follows the method developed in 
\cite{MP}.

The key result on which part (ii) of Theorem \ref{th-main1} relies is
the computation of the $L^2$-torsion. The computation is based on the 
Plancherel formula. It gives

\begin{prop}\label{prop-l2tor2}
Let the assumptions be as in Proposition $\mathrm{\ref{prop-l2tor1}}.$
There exists a constant $C_{\widetilde X}$, which depends on
$\widetilde X$, and a polynomial $P_\lambda(m)$, which depends on $\lambda$,
such that
\begin{equation}
\log T^{(2)}_X(\tau_\lambda(m))=C_{\widetilde X}\vol(X)\cdot P_\lambda(m),\quad
m\in\N.
\end{equation}
Moreover there is a constant $C_\lambda>0$ such that 
\begin{equation}\label{polyasym}
P_\lambda(m)=C_\lambda \cdot m\cdot\dim(\tau_\lambda(m))
+O\left(\dim(\tau_\lambda(m)\right)
\end{equation}
as $m\to\infty$.
\end{prop}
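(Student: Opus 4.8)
The plan is to compute $\log T_X^{(2)}(\tau_\lambda(m))$ explicitly via the $L^2$-Lefschetz/Plancherel machinery and then extract the polynomial structure. First I would recall the Lott--Mathai definition of the $L^2$-torsion in terms of the $\Gamma$-trace of the heat operators $e^{-t\Delta_p(\tau)}$ on $\widetilde X$. Since $\widetilde X = G/K$ is a symmetric space and the bundle $E_\tau$ is homogeneous, the Schwartz kernel of $e^{-t\Delta_p(\tau)}$ on the diagonal is $G$-invariant, so the $\Gamma$-trace equals $\vol(X)$ times the local trace at the basepoint; this already produces the factor $\vol(X)$ in front. Thus $\log T_X^{(2)}(\tau_\lambda(m)) = \tfrac12 \vol(X)\,\frac{d}{ds}\big|_{s=0}\,\mathcal{M}(s;\tau_\lambda(m))$ where $\mathcal{M}$ is the Mellin transform of the weighted alternating sum $\sum_p (-1)^p p\,\operatorname{tr}_\Gamma e^{-t\Delta_p}$ on the level of densities, and the goal is to show this equals $C_{\widetilde X}\vol(X) P_\lambda(m)$.

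The second step is to evaluate this local quantity using the Plancherel formula for $G$. The operator $\Delta_p(\tau)$ acts on sections of $\widetilde X \times_K (\Lambda^p\pL^* \otimes V_\tau)$, and by Kuga's lemma it agrees with a shift of the Casimir. Decomposing $L^2(G)$ into tempered representations $\pi \in \widehat{G}$ via Plancherel measure $d\mu_{\mathrm{Pl}}(\pi)$, one writes the heat trace density as an integral over the (unitary) tempered dual of $e^{-t(\Lambda_\pi(\tau)\text{-eigenvalue})}$ times multiplicities $\dim(V_\pi \otimes \Lambda^p\pL^* \otimes V_\tau)^K$. Under the hypothesis $\delta(\widetilde X)=1$, only the principal series attached to the fundamental (maximally split by $1$) parabolic contribute a nonvanishing alternating sum — this is where the Fried-type identity and the spectral gap (Corollary \ref{lowerbd7}) enter, guaranteeing that the $\tau_\lambda(m)$-isotypic Laplacians have no small eigenvalues, so the Mellin transform is holomorphic at $s=0$ with no contribution from a kernel. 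One is then left with a one-dimensional integral over $\nu \in \R$ (the continuous parameter of the principal series) of an explicit polynomial-times-Plancherel-density expression, whose $s$-derivative at $0$ is evaluated by a contour shift/residue computation as in \cite{Mu3}, \cite{MP}. The outcome is $\log T_X^{(2)}(\tau_\lambda(m)) = C_{\widetilde X}\vol(X) P_\lambda(m)$ with $P_\lambda(m)$ a finite sum of the form $\sum (\text{Plancherel polynomial in }\nu)$ integrated against explicit data, hence a polynomial in $m$.

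The third step is to identify the leading term \eqref{polyasym}. Here I would track the highest-weight dependence: the relevant principal series $\pi_{\sigma,\nu}$ entering the alternating sum is twisted by $m\lambda$, and its Casimir eigenvalue, Plancherel density, and the dimension of the $K$-isotypic space all have explicit dependence on $m$. The dominant contribution comes from the term where the "length" of the weight is largest; matching against Weyl's dimension formula one sees the integral over $\nu$ contributes a factor that grows like $m$ relative to $\dim(\tau_\lambda(m))$, while all remaining terms are $O(\dim(\tau_\lambda(m)))$. The positivity $C_\lambda > 0$ follows because the leading coefficient is (up to a fixed nonzero constant $C_{\widetilde X}$) a manifestly positive spectral integral — essentially an $L^2$-type quantity — so it cannot vanish. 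The hypothesis $\lambda_\theta \neq \lambda$ is used again precisely to ensure this leading coefficient is nonzero: when $\lambda = \lambda_\theta$ there is a cancellation in the alternating sum.

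I expect the main obstacle to be the careful bookkeeping in the Plancherel computation: identifying exactly which tempered representations survive the alternating sum $\sum_p (-1)^p p \dim(\cdots)^K$, computing the resulting $K$-multiplicities in closed form (via Vogan--Zuckerman / branching to $K$), and then performing the contour-shift evaluation of the Mellin transform's $s$-derivative while correctly handling the interchange of integration, summation, and differentiation near $s = 0$ — which in turn relies essentially on the spectral gap from Corollary \ref{lowerbd7}. Extracting the precise polynomial $P_\lambda(m)$ and verifying that $\deg R_\lambda = \deg \dim(\tau_\lambda(m))$ will require Weyl-dimension-formula asymptotics applied term by term.
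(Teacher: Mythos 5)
The overall strategy you describe — reduce the $\Gamma$-trace to a local quantity on $\widetilde X$, evaluate it with the Plancherel formula, show the Mellin transform is holomorphic at $s=0$ when $\dim X$ is odd, and extract the leading term from the polynomial structure — does match the architecture of the paper's proof (via Propositions \ref{PropTor}, \ref{L2spin}, \ref{L2SL3}, and \ref{l2torprod}). But there is a genuine gap in the step you most need: the positivity $C_\lambda>0$.

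You assert that the leading coefficient is ``a manifestly positive spectral integral --- essentially an $L^2$-type quantity --- so it cannot vanish.'' This is not true. After the Plancherel/Kostant computation the exponent of $m$ in the polynomial is governed (in the $\SO^0(p,q)$ case) by a sum with \emph{alternating} signs,
\[
\sum_{k=0}^n(-1)^k\int_0^{\lambda_{\tau(m),k}}P_{\sigma_{\tau(m),k}}(t)\,dt,
\]
coming from the alternating sum over the Weyl group elements $w\in W^1$ in Kostant's theorem (Corollary \ref{Kostant} and Proposition \ref{propthetaxi}). There is no a priori reason for this to be positive, and indeed massive cancellations occur. The paper handles this by the Bergeron--Venkatesh trick (Section 5.9.1 of \cite{BV}): rewrite the alternating sum as $\sum_{k=0}^n\int_{\lambda_{\tau(m),k+1}}^{\lambda_{\tau(m),k}}Q_k(t;m)\,dt$ with the interpolating polynomials $Q_k=\sum_{j\le k}\Pi_j$, then show each $Q_k$ is monotone increasing on the relevant interval because $Q_k'$ already has its full complement of roots elsewhere. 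This yields the two-sided bound \eqref{inequ}, and in particular the lower bound $\ge\tau_{n+1}m$. For $\SL(3,\R)$ the paper simply computes $\alpha_4(\tau)>0$ (resp.\ $\alpha_3(\tau)>0$) explicitly. Neither is ``manifest'' from the Plancherel integral itself. Without this monotonicity argument (or the explicit casework) your proof establishes that $P_\lambda(m)$ is a polynomial and that its degree is at most $1+\deg\dim\tau_\lambda(m)$, but not that the coefficient of $m\dim\tau_\lambda(m)$ is nonzero. You also do not use the product reduction $\widetilde X=\widetilde X_0\times\widetilde X_1$ (Proposition \ref{l2torprod}) or note that the classification forces $\widetilde X_1$ to be $\SO^0(p,q)/\SO(p)\times\SO(q)$ with $p,q$ odd or $\SL(3,\R)/\SO(3)$; a direct Plancherel attack could in principle bypass this, but you would then need to make the Kostant/Plancherel bookkeeping explicit case by case anyway, so nothing is saved.

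A minor point: the large-$t$ decay of the local heat trace $h_t^{\tau,p}(1)$, which gives holomorphy of the Mellin transform at $s=0$, is supplied in the paper by Lemma \ref{exponents} (using $\tau\not\cong\tau_\theta$ together with vanishing of $(\gL,K)$-cohomology of tempered principal series), not by Corollary \ref{lowerbd7} (which is a lower bound on the quotient $X$). The two statements are closely related through \eqref{bochhodge1}, so your argument can be made to work, but the citation is somewhat misdirected; the real content at this step is the representation-theoretic inequality $\tau(\Omega)-c(\xi)>0$.
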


Finally, we note that if one specializes the main result of \cite{BMZ2}, 
Theorem 1.1, to the case of analytic torsion of a locally symmetric space, one
can also determine the leading term of the asymptotic expansion of 
\eqref{asymptor2}. This has been carried out in \cite{BMZ2} in the case of
hyperbolic $3$-manifolds.

If we consider one of the odd-dimensional irreducible symmetric spaces
$\widetilde X$ with 
$\delta(\widetilde X)=1$ and choose  $\lambda$ to be (an integral multiple of) a
fundamental weight, 
the statements can be made more explicit. 

Let $G=\SO^0(p,q)$,
$K=\SO(p)\times \SO(q)$, $p>1$, $p,q$ odd, $p\geq q$, and 
let $\widetilde{X}:=G/K$. Let $n:=(p+q-2)/2$. There are two fundamental weights 
$\tilde{\omega}_{f,n}^{\pm}$ which are not invariant under $\theta$ and we let
$\omega_{f,n}^{\pm}:=2\tilde{\omega}_{f,n}^\pm$ (see
\eqref{hiweight}).
One 
has $\omega_{f,n}^{-}=(\omega_{f,n}^+)_\theta$. By equation \eqref{Torstheta},
it suffices to consider the 
weight $\omega_{f,n}^+$. 
For $m\in\N$ let $\tau(m)$ be the representation with highest weight 
$m\omega_{f,n}^+$. By Weyl's dimension formula there exists a constant $C>0$
such that
\begin{equation}
\dim(\tau(m))=Cm^{\frac{n(n+1)}{2}}+O\left(m^{\frac{n(n+1)}{2}-1}\right)
\end{equation}
as $m\to\infty$. Let $\widetilde X_d$ be the compact dual of $\widetilde X$.
We let $\epsilon(q):=0$ for $q=1$ and $\epsilon(q):=1$ for $q>1$ and we let 
\begin{equation}
C_{p,q}:=\frac{(-1)^{\frac{pq-1}{2}}2^{\epsilon(q)}\pi}{\vol(\widetilde{X}
_d)}\begin{pmatrix}n\\
\frac{p-1}{2} \end{pmatrix} .
\end{equation}
\begin{corollary}\label{cor-spin}
Let $p,q$ odd and let $\widetilde X=\SO^0(p,q)/\SO(p)\times \SO(q)$
and let $X=\Gamma\bs \widetilde X$. With respect to the
above notation we have
\[
\log T_X(\tau(m))=C_{p,q}\vol(X)\cdot m\dim(\tau(m))+
O\left(m^{\frac{n(n+1)}{2}}\right)
\]
as $m\to\infty$. 
\end{corollary}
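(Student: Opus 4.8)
The plan is to derive Corollary \ref{cor-spin} by specializing Theorem \ref{th-main1}(ii)---equivalently, Propositions \ref{prop-l2tor1} and \ref{prop-l2tor2}---to $G=\SO^0(p,q)$, $K=\SO(p)\times\SO(q)$, and then making the constants $C_{\widetilde X}$ and $C_\lambda$ explicit. First I would verify the hypotheses. Since $p$ and $q$ are odd, $\dim\widetilde X=\dim G-\dim K=pq$ is odd, and
\[
\delta(\widetilde X)=\rk_\C(G)-\rk_\C(K)=\frac{p+q}{2}-\frac{p-1}{2}-\frac{q-1}{2}=1 ,
\]
so we are in case (ii) of Theorem \ref{th-main1}. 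For $\lambda=\omega_{f,n}^+=2\tilde\omega_{f,n}^+$ one has $\lambda_\theta=\omega_{f,n}^-\neq\lambda$ (the Cartan involution induces on the fundamental root system the automorphism interchanging the two half-spin fundamental weights), so the standing assumption $\lambda_\theta\neq\lambda$ holds, $\tau(m)=\tau_\lambda(m)$, and the number of positive roots in $\Delta^+(\gL_\C,\hL_\C)$ not orthogonal to $\lambda$---hence the degree of $\dim(\tau(m))$ as a polynomial in $m$---equals $n(n+1)/2$, as recorded in the excerpt.

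Next, applying Theorem \ref{th-main1}(ii) produces $c>0$, $C_{\widetilde X}\neq0$, $C_\lambda>0$ and a polynomial $R_\lambda$ with $\deg R_\lambda=n(n+1)/2$ such that
\[
\log T_X(\tau(m))=C_{\widetilde X}\vol(X)\bigl(C_\lambda\, m\dim(\tau(m))+R_\lambda(m)\bigr)+O(e^{-cm}) .
\]
Since $\dim(\tau(m))=Cm^{n(n+1)/2}+O(m^{n(n+1)/2-1})$, the term $m\dim(\tau(m))$ has degree $\frac{n(n+1)}{2}+1$, whereas $C_{\widetilde X}\vol(X)R_\lambda(m)$ and $O(e^{-cm})$ are each $O(m^{n(n+1)/2})$. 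Hence
\[
\log T_X(\tau(m))=C_{\widetilde X}C_\lambda\,\vol(X)\cdot m\dim(\tau(m))+O\bigl(m^{n(n+1)/2}\bigr) ,
\]
and the only remaining task is to identify $C_{\widetilde X}C_\lambda$ with $C_{p,q}$.

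For this I would return to the proof of Proposition \ref{prop-l2tor2}. Because $\delta(\widetilde X)=1$, once the alternating sum $\sum_p(-1)^pp$ over form degrees has been carried out the Plancherel expression for $\log T^{(2)}_X(\tau_\lambda(m))$ collapses to a single integral over the one surviving continuous Plancherel parameter $\nu\in\R$ of the relevant Plancherel polynomial, weighted by $\vol(X)/\vol(\widetilde X_d)$ and twisted by the infinitesimal character of $\tau_\lambda(m)$. Inserting the root data of $\SO^0(p,q)$ and the ray $m\omega_{f,n}^+$, I would evaluate this integral explicitly---by shifting the contour and taking the residue, or by writing down an antiderivative---and extract the coefficient of $m\dim(\tau(m))$. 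The Plancherel-measure normalization supplies the factor $1/\vol(\widetilde X_d)$ together with the $\pi$ from the elementary integral; the set of positive roots that pair nontrivially with $\omega_{f,n}^+$ produces the binomial coefficient $\binom{n}{(p-1)/2}$; the alternating sign in $\sum_p(-1)^pp$ combined with the orientation gives $(-1)^{(pq-1)/2}$; and the factor $2^{\epsilon(q)}$ records the different structure of the restricted root system (and of $K$) when $q>1$ as opposed to the real hyperbolic case $q=1$. Matching the remaining product of linear factors against Weyl's dimension formula $\dim(\tau(m))=\prod_{\alpha\in\Delta^+(\gL_\C,\hL_\C)}\langle m\lambda+\rho,\alpha\rangle/\langle\rho,\alpha\rangle$ then yields $C_{\widetilde X}C_\lambda=C_{p,q}$.

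The main obstacle is precisely this explicit evaluation: one must pin down the Harish-Chandra $c$-function (the Plancherel density) of $\SO^0(p,q)$ along the ray $m\omega_{f,n}^+$, keep careful track of the Haar-measure normalizations that enter $\vol(\widetilde X_d)$, get the sign $(-1)^{(pq-1)/2}$ and the binomial coefficient $\binom{n}{(p-1)/2}$ exactly right, and treat the cases $q=1$ and $q>1$ separately. The verification of the hypotheses and the passage from $T^{(2)}_X(\tau_\lambda(m))$ to $T_X(\tau(m))$ up to an $O(e^{-cm})$ error are immediate from the results already established.
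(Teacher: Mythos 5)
Your strategy matches the paper's: verify $\delta(\widetilde X)=1$ (the computation $\delta(\widetilde X)=\frac{p+q}{2}-\frac{p-1}{2}-\frac{q-1}{2}=1$ is correct), pass from $T_X$ to $T_X^{(2)}$ via Proposition \ref{prop-l2tor1} up to an $O(e^{-cm})$ error, and then make the constant in the $L^2$-torsion explicit from the Plancherel computation, which is exactly what Proposition \ref{L2spin} does. The degree count $\frac{n(n+1)}{2}$ from the positive roots not orthogonal to $\omega_{f,n}^+$ is also right. The one substantive misattribution is the binomial coefficient: $\binom{n}{(p-1)/2}$ does \emph{not} come from the set of positive roots pairing nontrivially with $\omega_{f,n}^+$ (that set has $\frac{n(n+1)}{2}=\binom{n+1}{2}$ elements and governs the degree of $\dim(\tau(m))$, a different quantity). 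In the paper it arises from the Weyl group ratio $|W_{\mL}|/|W_{\kL_{\mL}}|=\frac{n!\,2^{n-1}}{p_1!\,q_1!\,2^{n-1-\epsilon(q)}}=2^{\epsilon(q)}\binom{n}{p_1}$ for $\mL_\C\cong\soL(2n,\C)$, $(\kL_\mL)_\C\cong\soL(2p_1,\C)\oplus\soL(2q_1,\C)$, combined via Lemma \ref{LemWA} with the factor $|W(A)|$ hiding in the Plancherel normalization $c_{\widetilde X}=1/(|W(A)|\vol(\widetilde X_d))$. Relatedly, the Plancherel expression does not collapse to a single integral; after Kostant's theorem it is a finite alternating sum over $w\in W^1$ of integrals $\int_0^{|\lambda_{\tau,w}|}P_{\check\sigma_{\tau,w}}$, and identifying $C_\Lambda=1$ for $\Lambda=\omega_{f,n}^\pm$ requires the interpolation-polynomial argument of \eqref{root}--\eqref{inequ} (following Bergeron--Venkatesh) applied to that sum, not merely a factor-matching against the Weyl dimension formula. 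These are gaps of detail in a paragraph you explicitly flag as the main obstacle, and the overall plan is sound.
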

The case $q=1$ was treated in \cite{MP} and the case $p=3$, $q=1$ in \cite{Mu3}.
In the latter case we have $\Spin(3,1)\cong\SL(2,\C)$. The irreducible
representation of $\Spin(3,1)$ with highest weight $\frac{1}{2}(m,m)$ 
corresponds to the $m$-th symmetric power of the standard 
representation $\SL(2,\C)$ on $\C^2$ and we have
\[
-\log T_X(\tau(m))=\frac{1}{4\pi}\vol(X) m^2+O(m).
\]

The remaining case is $\widetilde X=\SL(3,\R)/\SO(3)$. There are two
fundamental weights $\omega_i$, $i=1,2$. Both are non-invariant under $\theta$.
Let $\tau_i(m)$, $i=1,2$, be the irreducible representation with highest 
weight $m\omega_i$. By Weyl's dimensiona formula one has 
\begin{align*}
\dim_{\tau_i}(m)=\frac{1}{2}m^2+O(m),
\end{align*}
as $m\to\infty$. Let $\widetilde X_d$ be the compact dual of $\widetilde X$. 
\begin{corollary}\label{cor-sl3}
Let  $\widetilde X=\SL(3,\R)/\SO(3)$ and $X=\Gamma\bs \widetilde X$. We have
\[
\log T_X(\tau_i(m))=\frac{4\pi\vol(X)}{9\vol(
\widetilde{X}_d)}m\dim(\tau_i(m))+O(m^2)
\]
as $m\to\infty$.
\end{corollary}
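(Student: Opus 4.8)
The plan is to apply Corollary \ref{cor-spin}—or rather the general machinery behind Theorem \ref{th-main1}(ii) and Propositions \ref{prop-l2tor1} and \ref{prop-l2tor2}—directly to the case $\widetilde X=\SL(3,\R)/\SO(3)$, and then to make the universal constant $C_{\widetilde X}$ and the leading coefficient $C_\lambda$ explicit by the explicit Plancherel/Harish-Chandra data for $\SL(3,\R)$. First I would verify the hypotheses: $d=\dim\widetilde X=5$ is odd, and $\delta(\widetilde X)=\rk_\C(\SL(3,\R))-\rk_\C(\SO(3))=2-1=1$, so we are in case (ii). The fundamental weights $\omega_1,\omega_2$ are interchanged by the Cartan involution $\theta$ (equivalently, by the outer automorphism $-w_0$ of $\slL(3)$), so in particular $(\omega_i)_\theta=\omega_j\neq\omega_i$ for $i\neq j$, which is exactly the condition $\lambda_\theta\neq\lambda$ needed. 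By the symmetry $T_X(\tau_\lambda)=T_X(\tau_{\lambda_\theta})$ recorded in \eqref{Torstheta} it suffices to treat one of them, say $\tau_1(m)$, and the answer for $\tau_2(m)$ is identical.

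Next I would invoke Proposition \ref{prop-l2tor1} to reduce to the $L^2$-torsion: $\log T_X(\tau_1(m))=\log T_X^{(2)}(\tau_1(m))+O(e^{-cm})$, and then Proposition \ref{prop-l2tor2} to get $\log T_X^{(2)}(\tau_1(m))=C_{\widetilde X}\vol(X)P_1(m)$ with $P_1(m)=C_1\cdot m\dim(\tau_1(m))+O(\dim\tau_1(m))$. Since $\dim(\tau_1(m))=\tfrac12 m^2+O(m)$ by Weyl's dimension formula (the product over the two positive roots $\alpha_1,\alpha_2$ of $\langle m\omega_1+\rho,\alpha^\vee\rangle/\langle\rho,\alpha^\vee\rangle$, giving a polynomial of degree $|\Delta^+|=2$ in $m$ with leading term $m^2/2$), the error terms $O(\dim\tau_1(m))$ and $O(e^{-cm})$ are both $O(m^2)$, which is the stated error. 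So the content of the corollary is the identification of the product $C_{\widetilde X}\cdot C_1$ with $4\pi/(9\vol(\widetilde X_d))$.

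The main computation, therefore, is evaluating the constant $C_{\widetilde X}C_1$ explicitly. This comes from the Plancherel-measure formula underlying Proposition \ref{prop-l2tor2}: the $L^2$-torsion is expressed (via the Fried-type formula / the $\Gamma$-trace of the heat kernel twisted by $\tau_\lambda(m)$) as $\vol(X)$ times an integral over the Plancherel density of $G=\SL(3,\R)$ of a function built from the Casimir eigenvalues on $\tau_\lambda(m)$-valued forms, combined with the contribution of the fundamental series since $\delta(\widetilde X)=1$. Concretely I would: (a) fix the root system of type $A_2$ with $\rho=\omega_1+\omega_2$, write down the fundamental Cartan $\hL$ and the split rank-one torus relevant to $\delta=1$; (b) plug the half-sum $\rho$ and the highest weight $m\omega_1$ into the Plancherel polynomial $\pi(\lambda)$ for $\SL(3,\R)$ (an explicit product of linear forms in the continuous parameter), which Harish-Chandra's formula gives completely explicitly for $A_2$; (c) carry out the resulting one-dimensional integral—by the residue/Fourier-transform trick used in \cite{MP}, this reduces to evaluating a derivative of a rational function at a point, producing a rational multiple of $\pi$; and (d) normalize by $\vol(\widetilde X_d)$, the volume of the compact dual $\SU(3)/\SO(3)$, which enters through the comparison of the Plancherel density with the Euler-characteristic normalization. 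Collecting constants should give the factor $4\pi/(9\vol(\widetilde X_d))$.

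The hard part will be step (c)–(d): keeping track of all normalization constants—the choice of invariant metric on $\widetilde X$ (which fixes both $\vol(X)$ and $\vol(\widetilde X_d)$), the normalization of Haar measure implicit in the Plancherel theorem, and the combinatorial factor from summing $(-1)^p p$ over the five degrees of forms with the $K$-types of $\Lambda^p\pL^*\otimes\tau_\lambda(m)$—so that the final rational number is exactly $4/9$ and not off by a power of $2$ or $3$. The cleanest route is to specialize the general formula for $C_{\widetilde X}P_\lambda(m)$ derived in the proof of Proposition \ref{prop-l2tor2} to $A_2$ rather than redo the harmonic analysis from scratch; once the abstract formula is in hand, the $\SL(3,\R)$ case is a finite, if delicate, bookkeeping computation with the $A_2$ Weyl group ($S_3$, order $6$) and the two positive roots, and the degree-$2$ leading behavior of $\dim\tau_i(m)$ follows immediately from Weyl's formula as noted.
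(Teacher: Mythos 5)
Your outline matches the paper's route. The paper's stated proof of Corollary~\ref{cor-sl3} is simply that it follows from Proposition~\ref{L2SL3} (combined with Proposition~\ref{prop-l2tor1}), and Proposition~\ref{L2SL3} is exactly the explicit $A_2$ Plancherel computation you sketch: decompose via Kostant's theorem over $W^1$, write down $P_\xi$, $\lambda_{\tau(m),w}$, and $\sigma_{\tau(m),w}$ for $\SL(3,\R)$ explicitly, integrate the Plancherel polynomial, and match normalization constants through $\vol(\widetilde X_d)$. Your arithmetic check is also right: for $\Lambda=\omega_1$ one has $\dim\tau_1(m)=\tfrac12 m^2+O(m)$, so the cubic coefficient $\tfrac{2\pi}{9\vol(\widetilde X_d)}$ obtained from the Plancherel integral equals $\tfrac{4\pi}{9\vol(\widetilde X_d)}\cdot\tfrac12$, as claimed. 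The only point where your proposal is imprecise is the suggestion that one can ``specialize the general formula for $C_{\widetilde X}P_\lambda(m)$ derived in the proof of Proposition~\ref{prop-l2tor2}''—that proposition is an existence statement, and in the paper the $\SL(3,\R)$ case is handled by its own separate explicit computation (Proposition~\ref{L2SL3}), parallel to but distinct from the $\SO^0(p,q)$ case (Proposition~\ref{L2spin})—but your subsequent steps (a)--(d) describe precisely that computation, so this is a matter of phrasing rather than substance.
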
 
Using the equality of analytic and Reidemeister torsion \cite{Mu2}, we obtain
corresponding statements for the Reidemeister torsion $\tau_X(\tau_\lambda(m))$.
Especially we have
\begin{corollary}
Let $X=\Gamma\bs \widetilde X$ be a compact odd-dimensional locally symmetric
manifold with
$\delta(\widetilde X)=1$. Let $\lambda\in\hL^*_\C$ be a highest weight which
satisfies $\lambda_\theta\neq\lambda$. Let $\tau_X(\tau_\lambda(m))$ be the 
Reidemeister torsion of $X$ with respect to the representation 
$\tau_\lambda(m)$. Then $\vol(X)$ is determined by the set
$\{\tau_X(\tau_\lambda(m))\colon m\in\N\}$.
\end{corollary}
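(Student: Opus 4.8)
The plan is to deduce the statement from part~(ii) of Theorem~\ref{th-main1} together with the equality of analytic and Reidemeister torsion. First I would check that the representations involved are unimodular: since $G$ is connected semisimple, it is a perfect group, so the homomorphism $\det\circ\,\tau_\lambda(m)\colon G\to\C^\times$ into an abelian group is trivial, whence $|\det\tau_\lambda(m)(\gamma)|=1$ for all $\gamma\in\Gamma$. Thus $\tau_\lambda(m)|_\Gamma$ is unimodular and the theorem of \cite{Mu2} applies, giving $\tau_X(\tau_\lambda(m))=T_X(\tau_\lambda(m))$ for every $m\in\N$ (both being positive real numbers once the admissible metric is fixed). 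Hence the set $\{\tau_X(\tau_\lambda(m))\colon m\in\N\}$ coincides with $\{T_X(\tau_\lambda(m))\colon m\in\N\}$.

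Next I would insert this into the asymptotic expansion. Set $D_\lambda(m):=\dim(\tau_\lambda(m))$, which by Weyl's dimension formula is an explicit polynomial in $m$, say of degree $N=N(\lambda)$ with known positive leading coefficient $c_N$; neither $N$ nor $c_N$ depends on $\Gamma$. Combining \eqref{asymptor1} and \eqref{polyn1},
\begin{equation*}
\log\tau_X(\tau_\lambda(m))=C_{\widetilde X}\,\vol(X)\bigl(C_\lambda\,m\,D_\lambda(m)+R_\lambda(m)\bigr)+O(e^{-cm}),
\end{equation*}
with $\deg R_\lambda=N<N+1=\deg\bigl(m\,D_\lambda(m)\bigr)$, $C_{\widetilde X}\neq0$ depending only on $\widetilde X$, and $C_\lambda>0$ depending only on $\lambda$. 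Dividing by $m\,D_\lambda(m)$ and letting $m\to\infty$ kills both the $R_\lambda$-contribution and the exponentially small error, so
\begin{equation*}
\lim_{m\to\infty}\frac{\log\tau_X(\tau_\lambda(m))}{m\,D_\lambda(m)}=C_{\widetilde X}\,C_\lambda\,\vol(X).
\end{equation*}

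Finally I would observe that everything on the left-hand side is computable from the given data: $\widetilde X$ is the universal covering of $X$, hence determined by $X$, so $C_{\widetilde X}$ is a fixed nonzero constant; $C_\lambda$ and the polynomial $D_\lambda(m)$ depend only on $\lambda$ and are known explicitly; and the sequence $\log\tau_X(\tau_\lambda(m))$ is read off from the set $\{\tau_X(\tau_\lambda(m))\colon m\in\N\}$. Since $C_{\widetilde X}C_\lambda\neq0$, solving the displayed identity for $\vol(X)$ proves the corollary. There is no substantive obstacle here, as all the analytic content lies in the already-established Theorem~\ref{th-main1}(ii); the only points requiring care are the unimodularity check just made, needed to invoke \cite{Mu2}, and the verification that $C_{\widetilde X}$ and $C_\lambda$ genuinely do not involve $\Gamma$ — the latter being built into Proposition~\ref{prop-l2tor2}, whose constants arise from a Plancherel computation on $\widetilde X$ alone.
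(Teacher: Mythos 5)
Your proposal is correct and follows the same route the paper takes: invoke the Cheeger--M\"uller equality of analytic and Reidemeister torsion from \cite{Mu2} (the unimodularity check you supply is exactly the hypothesis needed there, and is tacit in the paper), then extract $\vol(X)$ from the leading-order asymptotics of Theorem \ref{th-main1}(ii) via the limit $\lim_{m\to\infty}\log\tau_X(\tau_\lambda(m))/(m\dim\tau_\lambda(m))=C_{\widetilde X}C_\lambda\vol(X)$. The paper's own treatment is just the one-line remark preceding the corollary; your expanded version is correct, and the slight ambiguity in passing from a \emph{set} to a \emph{sequence} is harmless because the displayed limit is invariant under index shifts.
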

Finally we note that Bergeron and Venkatesh \cite{BV} proved results of a
similar nature, but in a different aspect. Let $\delta(\widetilde X)=1$.
Let $\Gamma\supset\Gamma_1\supset\cdots\supset \Gamma_N\supset\cdots$ be a 
tower of subgroups of finite index with $\cap_N\Gamma_N=\{e\}$. A 
representation $\tau$ of $G$ is called strongly acyclic, if the spectrum
of the Laplacians $\Delta_p(\tau)$ on $\Gamma_N\bs \widetilde X$ stays
uniformly bounded away from zero. Then for a strongly acyclic representation 
$\tau$ they show that there is a constant
$c_{G,\tau}>0$ such that
\[
\lim_{N\to\infty}\frac{\log T_{\Gamma_N\bs \widetilde
X}(\tau)}{[\Gamma\colon\Gamma_N]}
=c_{G,\tau}\vol(\Gamma\bs \widetilde X).
\]

Next we explain our methods to prove Theorem \ref{th-main1}. The first step 
is the proof of Proposition \ref{prop-l2tor1}. We follow the proof
used in \cite{MP}. For an irreducible representation $\tau$ of $G$ and 
$t>0$ put
\[
K(t,\tau):=\sum_{p=0}^d (-1)^p p\Tr\left(e^{-t\Delta_p(\tau)}\right).
\]
Assume that $\tau|_\Gamma$ is acyclic, that is $H^*(X,E_\tau)=0$. Then the 
analytic torsion is given by
\begin{equation}\label{anator}
\log T_X(\tau):=\frac{1}{2}\frac{d}{ds}\left(\frac{1}{\Gamma(s)}
\int_0^\infty t^{s-1}K(t,\tau)\;dt\right)\bigg|_{s=0}.
\end{equation}
Now the key ingredient of the proof of Proposition \ref{prop-l2tor1} is the
following lower bound for the spectrum of the Laplacians.
For every highest weight $\lambda$ which satisfies $\lambda_\theta\neq\lambda$,
there exist $C_1,C_2>0$ such that
\begin{equation}\label{lowerbd0}
\Delta_p(\tau_\lambda(m))\ge C_1m^2-C_2,\quad m\in\N,
\end{equation}
(see Corollary \ref{lowerbd7}). Since $\tau_\lambda(m)$ is acyclic and $\dim X$ 
is odd, $T_X(\tau_\lambda(m))$ is metric independent \cite{Mu2}. Especially, it 
is invariant under rescaling of the metric. So we can replace 
$\Delta_p(\tau_\lambda(m))$ by $\frac{1}{m}\Delta_p(\tau_\lambda(m))$. Then
\begin{equation}\label{anator01}
\begin{split}
\log T_X(\tau(m))=&\frac{1}{2}\frac{d}{ds}\left(\frac{1}{\Gamma(s)}
\int_0^1 t^{s-1}K\left(\frac{t}{m},\tau(m)\right)\,dt\right)\bigg|_{s=0}\\
&+\frac{1}{2}\int_1^\infty t^{-1}K\left(\frac{t}{m},\tau(m)\right)\,dt.
\end{split}
\end{equation}
It follows from \eqref{lowerbd0} and standard estimations of the heat
kernel that the second term on the right is $O(e^{-\frac{m}{8}})$ as
$m\to\infty$.
To deal with the first term, we use a preliminary form of the Selberg trace 
formula. It turns out that the contribution of the nontrivial conjugacy 
classes to the trace
formula is also exponentially decreasing in $m$. Finally, the identity 
contribution equals $\log T^{(2)}_X(\tau_\lambda(m))$ up to a term, which is 
exponentially decreasing in $m$. This implies Proposition 
\ref{prop-l2tor1}.

To deal with the $L^2$-torsion, we recall that for any $\tau$, 
$\log T^{(2)}_X(\tau)$ it is defined in terms of the
$\Gamma$-trace of the heat operators $e^{-t\widetilde \Delta_p(\tau)}$ on the 
universal covering \cite{Lo}, \cite{Mat}. In our case,  $e^{-t\widetilde
\Delta_p(\tau)}$ 
is a 
convolution operator and its $\Gamma$-trace equals the contribution of the 
identity to the spectral side of the Selberg trace formula applied to
$e^{-t\Delta_p(\tau)}$. It follows that
\[
\log T_X^{(2)}(\tau)=\vol(X)\cdot t^{(2)}_{\widetilde X}(\tau),
\]
where $t^{(2)}_{\widetilde X}(\tau)$ depends only on $\widetilde X$ and $\tau$. 
To compute $t^{(2)}_{\widetilde X}(\tau)$ we factorize $\widetilde X$ as
$\widetilde X=\widetilde X_0\times \widetilde X_1$, where 
$\delta(\widetilde X_0)=0$ and $\widetilde X_1$ is irreducible with 
$\delta(\widetilde X_1)=1$. Let $\tau=\tau_0\otimes\tau_1$ be the corresponding
decomposition of $\tau$. Let $\widetilde X_{0,d}$ be the compact dual
symmetric space of $\widetilde X_0$.
Using  a formula similar to \cite[Proposition 11]{Lo}, we get
\[
t_{\widetilde X}^{(2)}(\tau)=(-1)^{\dim(\widetilde X_0)/2}\frac{\chi(\widetilde
X_{0,d})}
{\vol(\widetilde X_{0,d})}\dim(\tau_0)\cdot t_{\widetilde X_1}^{(2)}(\tau_1).
\]
This reduces the computation of 
$t^{(2)}_{\widetilde X}(\tau)$  to the case of an irreducible symmetric
space $\widetilde X$ with $\delta(\widetilde X)=1$ which is odd-dimensional.
From the classification
of simple Lie groups it follows  that the only possibilities for 
$\widetilde X$ are 
$\widetilde X=\SL(3,\R)/\SO(3)$ or $\widetilde X=\SO^0(k,l)/\SO(k)\times\SO(l)$,
$k,l$ odd. Using the Plancherel
formula, $t^{(2)}_{\widetilde
X}(\tau)$
can be computed explicitly for these cases. Combined with Weyl's dimension
formula, it follows that $t^{(2)}_{\widetilde X}(\tau_\lambda(m))$ is a
polynomial in
$m$. In this way we obtain our main result. 

The paper is organized as follows. In section \ref{secprel} we collect some 
facts about representations of reductive Lie groups. Section \ref{secBLO} is
concerned with Bochner-Laplace operators on locally symmetric spaces. The main
result are estimations of the heat kernel of a Bochner-Laplace operator.
In section \ref{sector} we consider the analytic torsion in general. The main 
result of this section is Proposition \ref{vanish1}, which establishes part
(i) of Theorem \ref{th-main1}. Section \ref{secl2-tor} is devoted to the study
of the $L^2$-torsion. We reduce the study of the $L^2$-torsion to the case of 
an irreducible symmetric space $\widetilde X$ with $\delta(\widetilde X)=1$. 
This case is then treated in section \ref{secdelta1}. Especially we establish
Proposition \ref{prop-l2tor2} in this case. In section \ref{seclowbd} we prove
a lower bound for the spectrum of the twisted Laplace operators. This is the
key result for the proof of Proposition \ref{prop-l2tor1}. In the final section 
\ref{secmainres} we prove our main result, Theorem \ref{th-main1}.

\section{Preliminaries}\label{secprel}
\setcounter{equation}{0}

In this section we summarize some facts about representations of reductive
Lie groups. 
\subsection{}

Let $G$ be a real reductive Lie group in the sense of \cite[p. 446]{Kn2}. Let
$K\subset G$ be the associated maximal compact subgroup. Then $G$ has only
finitely many connected components. Denote by $G^0$ the component of the
identity. Let $\gL$ and $\kL$ denote the Lie algebras of $G$ and $K$, 
respectively. Let $\gL=\kL\oplus\pL$ be the Cartan decomposition. 

We denote by $\hat G$ the unitary dual and by $\hat G_d$ the discrete
series of $G$. By $\Rep(G)$ we denote the
equivalence classes of irreducible finite-dimensional representations of $G$. 

Let $Q$ be a standard parabolic subgroup of $G$ \cite[VII.7]{Kn2}.  Then $Q$ has
a
Langlands decomposition $Q=MAN$, where $M$ is reductive and $A$ is abelian. 
$Q$ is called cuspidal if $\hat M_d\neq\emptyset$. Let $K_M=K\cap M$. Then 
$K_M$ is a maximal compact subgroup of $M$. 

Let $Q=MAN$ be cuspidal. For $(\xi,W_\xi)\in\hat M_d$ and $\nu\in\aL^*_\C$,
let
\begin{equation}\label{indrep}
\pi_{\xi,\nu}=\Ind_Q^G(\xi\otimes e^\nu\otimes \Id)
\end{equation}
be the induced representation acting by the left regular representation
on the Hilbert space 
\begin{equation}\label{indrep1}
\begin{split}
\cH_{\xi,\nu}=\bigl\{f\colon &G\to W_\xi\colon f(gman)=e^{-(i\nu+\rho_Q)(\log
a)}
\xi(m)^{-1}f(g),\\
&\forall\,\, m\in M,\,a\in A,\,n\in N,\,g\in G,\,f|_K\in L^2(K,W_\xi)\bigr\}
\end{split}
\end{equation}
with norm given by
\[
\|f\|^2=\int_K|f(k)|^2_{W_\xi}\,dk.
\]
If $\nu\in\aL^*$ , then $\pi_{\xi,\nu}$ is unitarily induced.
 Denote by $\Theta_{\xi,\nu}$ the global character of $\pi_{\xi,\nu}$.

\subsection{}\label{secDS}
Next we recall some facts concerning the discrete series. 
Let $G$ be a linear semisimple connected Lie group with
finite center. Let
$K\subset G$ be a maximal compact subgroup. Assume that $\delta(G)=0$. Then 
$G/K$ is even-dimensional. Let $n=\dim(G/K)/2$. Let $\tL\subset\kL$ be a
compact Cartan subalgebra of $\gL$. Let $\Delta(\gL_{\C},\tL_{\C})$,
$\Delta(\kL_\C,\tL_\C)$ be the corresponding roots with Weyl-groups $W_G$,
$W_K$. 
Then one can regard $W_K$ as a subgroup of $W_G$. Let $P$ be the weight lattice
in $i\tL^*$. Let $\left<\cdot,\cdot\right>$ be the inner product on
$i\tL^*$ induced by the Killing form. Recall that $\Lambda\in P$ is called
regular if 
$\left<\Lambda,\alpha\right>\neq 0$ for all
$\alpha\in\Delta(\gL_{\C},\tL_{\C})$. 
Then $\hat{G}_d$ is parametrized by the $W_K$-orbits of the 
regular elements of $P$, where $W_K$ is the Weyl group of
$\Delta(\kL_\C,\tL_\C)$, \cite[Theorem 12.20, Theorem 9.20]{Kn1}. If $\Lambda$
is a regular
element of 
$P$, the corresponding discrete series will be denoted by $\omega_\Lambda$. 
For $\pi\in\hat G$ we denote by $\chi_\pi$ the infinitesimal character of
$\pi$. For a regular element $\Lambda\in\hL_\C^*$ let $\chi_\Lambda$ be the 
homomorphism of $\mathcal{Z}(\gL_\C)$, defined by \cite[(8.32)]{Kn1}.
By \cite[Theorem 9.20]{Kn1},
the infinitesimal character of $\omega_\Lambda$ is given by $\chi_\Lambda$.
Fix positive roots $\Delta^+(\gL_{\C},\tL_{\C})$ and let
$P^+$ be the 
corresponding set of dominant weights. Let $\rho_G$ be the half sum of the
elements of $\Delta^+(\gL_{\C},\tL_{\C})$
Then we have the following proposition. 
\begin{prop}\label{PropDS}
Let $\tau\in\Rep(G)$. Then for $\pi\in\hat{G}_d$ one has
\begin{align*}
\dim\left(H^p(\gL,K;\cH_{\pi,K}\otimes V_\tau)\right)
=\begin{cases}
1,& \chi_\pi=\chi_{\check{\tau}}, \:p=n;\\
0,& \text{else}.
\end{cases}
\end{align*}
Moreover, there are exactly $|W_G|/|W_K|$ distinct elements of $\hat{G}_d$ with
infinitesimal 
character $\chi_{\check{\tau}}$, where $\check{\tau}$ is the contragredient
representation 
of $\tau$. 
\end{prop}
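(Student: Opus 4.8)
The plan is to derive the dimension formula from Wigner's lemma together with the Vogan--Zuckerman computation of relative Lie algebra cohomology, and the counting statement from the Harish-Chandra parametrization of the discrete series. First I would record the standard reformulation: since $V_\tau$ is finite-dimensional, $\cH_{\pi,K}\otimes V_\tau\cong\Hom_\C(V_{\check\tau},\cH_{\pi,K})$ as $(\gL,K)$-modules, so that
\[
H^p(\gL,K;\cH_{\pi,K}\otimes V_\tau)\cong\operatorname{Ext}^p_{(\gL,K)}(V_{\check\tau},\cH_{\pi,K})
\]
in all degrees. By Wigner's lemma the right-hand side vanishes unless $V_{\check\tau}$ and $\cH_{\pi,K}$ have the same infinitesimal character, i.e. unless $\chi_\pi=\chi_{\check\tau}$; recall that $\chi_{\check\tau}=\chi_{\Lambda_{\check\tau}+\rho_G}$ is regular because $\tau$ is finite-dimensional. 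This already gives the asserted vanishing whenever $\chi_\pi\neq\chi_{\check\tau}$; the vanishing in degrees $p\neq n$ in the remaining case will come from the next step.

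Next I would treat the case $\chi_\pi=\chi_{\check\tau}$ with $\pi\in\hat G_d$, say $\pi=\omega_\Lambda$. Using cohomological induction, $\omega_\Lambda$ is the Vogan--Zuckerman module $A_\bL(\lambda)$ attached to a $\theta$-stable Borel subalgebra $\bL=\tL_\C\oplus\uL$ of $\gL_\C$, where $L=T$ is the compact Cartan subgroup and $\lambda$ is pinned down by the requirement that the infinitesimal character be $\chi_{\check\tau}$. The Vogan--Zuckerman formula then yields
\[
H^{R+j}(\gL,K;\cH_{\pi,K}\otimes V_\tau)\;\cong\;\Hom_{T}\!\big(\Lambda^{j}(\tL_\C\cap\pL_\C),\C\big),\qquad R=\dim_\C(\uL\cap\pL_\C),
\]
together with vanishing in degrees $<R$; the hypothesis needed here (the parameter being in the good range, and the coefficient system matched to $\chi_{\check\tau}$) is exactly our running assumption $\chi_\pi=\chi_{\check\tau}$. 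Since $\tL\subset\kL$ we have $\tL_\C\cap\pL_\C=0$, so the right-hand side is $\C$ for $j=0$ and $0$ for $j>0$; hence the cohomology is one-dimensional in degree $R$ and zero otherwise. It remains to identify $R$ with $n$: the nilradical $\uL$ is isotropic for the Killing form $B$ (two positive roots never sum to zero), hence so is $\uL\cap\pL_\C$ inside the nondegenerate space $(\pL_\C,B|_{\pL_\C})$ of dimension $2n$, whence $R\le n$; the same argument applied to the opposite nilradical $\bar\uL$ gives $\dim_\C(\bar\uL\cap\pL_\C)\le n$, and since $\pL_\C=(\uL\cap\pL_\C)\oplus(\bar\uL\cap\pL_\C)$ we conclude $R=n$. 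Combined with the previous paragraph, this is the asserted dimension formula.

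For the counting statement I would argue combinatorially. By \cite[Theorem 9.20]{Kn1} the elements of $\hat G_d$ with infinitesimal character $\chi_{\check\tau}$ are in bijection with the $W_K$-orbits of the regular elements $\Lambda\in P$ satisfying $\chi_\Lambda=\chi_{\check\tau}$. Fixing a regular parameter $\Lambda_0$ representing $\chi_{\check\tau}$, these $\Lambda$ are exactly the elements of the $W_G$-orbit $W_G\Lambda_0$, all of which are again regular and again lie in $P$ (since $\check\tau$ is a representation of $G$ and $P$ is $W_G$-invariant). As $\Lambda_0$ is regular, $W_G$ acts simply transitively on $W_G\Lambda_0$, which therefore has $|W_G|$ elements, and the subgroup $W_K\subset W_G$ acts freely on it (again by regularity). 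Hence there are exactly $|W_G|/|W_K|$ such orbits, i.e. that many discrete series with infinitesimal character $\chi_{\check\tau}$.

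The step I expect to be the main obstacle is the middle one: correctly realizing each discrete series with infinitesimal character $\chi_{\check\tau}$ as an $A_\bL(\lambda)$ with the coefficient system matched to $\chi_{\check\tau}$, and verifying the numerical identity $R=n$. A more hands-on alternative, which I would use as a cross-check, replaces Vogan--Zuckerman by the standard complex $\Hom_K(\Lambda^\bullet\pL,\cH_{\pi,K}\otimes V_\tau)$: Kuga's lemma shows that its Laplacian is a scalar built from the Casimir eigenvalues of $\pi$ and $\tau$, which vanishes precisely when $\chi_\pi=\chi_{\check\tau}$, so the cohomology equals the full space $\Hom_K(\Lambda^\bullet\pL\otimes V_{\check\tau},\cH_{\pi,K})$ of $K$-invariants; using $\Lambda^\bullet\pL_\C\cong S\otimes S^*$ and Parthasarathy's Dirac-operator realization of the discrete series then forces this space to be one-dimensional and concentrated in the middle degree $n$, at the cost of keeping track of the $\Z/2$-grading. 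The remaining inputs---Wigner's lemma and the Weyl-group orbit count---are routine.
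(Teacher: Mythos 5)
Your proof is correct, and for the dimension formula it takes a genuinely different route from the paper. The paper simply cites \cite[Theorem I.5.3]{BW} together with the regularity of $\Lambda(\check\tau)+\rho_G$, which packages the whole statement about $(\gL,K)$-cohomology of a discrete series representation with matching infinitesimal character into a single reference. You instead assemble the result from more primitive ingredients: Wigner's lemma for the vanishing when $\chi_\pi\neq\chi_{\check\tau}$, the Vogan--Zuckerman realization $\pi\cong A_\bL(\lambda)$ for a $\theta$-stable Borel with $L=T$, and the Vogan--Zuckerman cohomology formula, with the identification $R=\dim_\C(\uL\cap\pL_\C)=n$ made by the clean isotropy argument (equivalently, $\pL_\C$ splits into the $\pm$ noncompact root spaces, each of dimension $n$). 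The counting argument is essentially the same as the paper's: both use the Harish--Chandra parametrization by $W_K$-orbits of regular lattice points and the free transitive action of $W_G$ on the orbit of $\Lambda(\check\tau)+\rho_G$. What the paper's route buys is brevity; what yours buys is an explicit derivation of the Borel--Wallach theorem in this special case, making visible why the cohomology concentrates in the middle degree $n$. The one place where you should be slightly careful is in asserting that the relevant Vogan--Zuckerman vanishing and formula apply; as you note, this requires the parameter $\lambda$ to be in the good range, which for discrete series with regular infinitesimal character equal to $\chi_{\check\tau}$ is automatic, so the argument goes through. Your Kuga--Parthasarathy cross-check is also sound, though it ultimately re-proves the same Borel--Wallach input by a Dirac-operator route rather than a cohomological-induction one.
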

\begin{proof}
Let $\Lambda(\check{\tau})\in P^+$ be the highest weight of $\tau$. Clearly
$\Lambda(\check{\tau})+\rho_G$ is
regular. Thus, since
$W_G$ acts 
freely on the regular elements, the
proposition follows from 
\cite[Theorem I.5.3]{BW} and the above remarks on infinitesimal characters.
\end{proof}

\subsection{}
Let $Q=MAN$ be a standard parabolic subgroup. In general, $M$ is neither
semisimple nor connected. But $M$ is reductive in the sense of 
\cite[p. 466]{Kn2}. Let $K_M=K\cap M$, let $K_M^0$ be the component of 
the identity, and let $\kL_{\mL}:=\kL\cap\mL$ be its Lie algebra. Assume
that $\rk(M)=\rk(K_M)$. Then $M$ has a nonempty discrete series, which is
defined as in \cite[XII,\S 8]{Kn1}. The explicit parametrization is given in 
\cite[Proposition 12.32]{Kn1}, \cite[section 8.7.1]{Wa1}.

\section{Bochner Laplace operators}\label{secBLO}
\setcounter{equation}{0}

Let $G$ be a semisimple connected Lie group with
finite center. Let $K\subset G$ be a maximal compact subgroup. Let $\widetilde
X=G/K$. Let  $\Gamma$ be a torsion free, cocompact
discrete subgroup of $G$ and let $X=\Gamma\bs \widetilde X$.

Let $\nu$ be a finite-dimensional unitary representation of $K$ on the space
$(V_{\nu},\left<\cdot,\cdot\right>_{\nu})$ let 
\begin{align*}
\widetilde{E}_{\nu}:=G\times_{\nu}V_{\nu}
\end{align*}
be the associated homogeneous vector bundle over $\widetilde{X}$. 
Let $R_g\colon \widetilde{E}_{\nu}\to \widetilde{E}_{\nu}$ be the action of 
$g\in G$. The inner product
$\left<\cdot,\cdot\right>_{\nu}$ induces a $G$-invariant fiber metric 
$\widetilde{h}_{\nu}$ on $\widetilde{E}_{\nu}$. Let $\widetilde{\nabla}^{\nu}$
be the 
connection on $\widetilde{E}_{\nu}$ induced by the canonical connection on the
principal $K$-fiber bundle $G\to G/K$. Then 
$\widetilde{\nabla}^{\nu}$ is $G$-invariant.
Let  
\begin{align*}
E_{\nu}:=\Gamma\bs \widetilde{E}_\nu
\end{align*}
be the associated locally homogeneous bundle over $X$. Since 
$\tilde{h}_{\nu}$ and $\widetilde{\nabla}^{\nu}$ are $G$-invariant, they can be
pushed down to a metric $h_{\nu}$ and a connection $\nabla^{\nu}$ on $E_{\nu}$. 
Let $C^{\infty}(\widetilde{X},\widetilde{E}_{\nu})$ resp. 
$C^{\infty}(X,E_{\nu})$ denote the
space of smooth sections of $\widetilde{E}_{\nu}$
resp. of $E_\nu$.  Let
\begin{align}\label{globsect}
C^{\infty}(G,\nu) 
:=\{&f:G\rightarrow V_{\nu}\colon f\in C^\infty,\nonumber \\ &\;
f(gk)=\nu(k^{-1})f(g),\,\,\forall g\in G, \,\forall k\in K\}.
\end{align}
Let $L^2(G,\nu)$ be the corresponding $L^2$-space.
There is a canonical isomorphism
\begin{equation}\label{iso1}
A:C^{\infty}(\widetilde{X},\widetilde{E}_{\nu})\cong C^{\infty}(G,\nu)
\end{equation}
which is defined by $Af(g)=R_g^{-1}(f(gK))$. It extends to an isometry
\begin{equation}\label{iso2}
A\colon L^{2}(\widetilde{X},\widetilde{E}_{\nu})\cong L^2(G,\nu).
\end{equation}
Let
\begin{align}\label{globsect1}
C^{\infty}(\Gamma\backslash G,\nu):=\left\{f\in C^{\infty}(G,\nu)\colon 
f(\gamma g)=f(g)\:\forall g\in G, \forall \gamma\in\Gamma\right\}
\end{align}
and let $L^2(\Gamma\bs G,\nu)$ be the corresponding $L^2$-space. 
The isomorphisms \eqref{iso1} and \eqref{iso2} descend to isomorphisms
\begin{equation}\label{iso3}
A:C^{\infty}(X,E_{\nu}) \cong C^{\infty}(\Gamma\backslash G,\nu),\quad
L^{2}(X,E_{\nu})\cong L^2(\Gamma\bs G,\nu).
\end{equation}
Let
$\widetilde{\Delta}_{\nu}={\widetilde{\nabla^\nu}}^{*}{\widetilde{\nabla}}^{\nu}
$
be the Bochner-Laplace operator of $\widetilde{E}_{\nu}$. 
Since $\widetilde{X}$ is complete, 
$\widetilde{\Delta}_{\nu}$ with domain the space of smooth compactly supported 
sections
is essentially self-adjoint \cite[p. 155]{LM}. Its self-adjoint extension
will be denoted by $\widetilde{\Delta}_\nu$ too. With respect to the isomorphism
\eqref{iso1} one has
\begin{align}\label{BLO}
\widetilde{\Delta}_{\nu}=-R(\Omega)+\nu(\Omega_K),
\end{align} 
where $R$ denotes the right regular representation 
of $G$ on $C^\infty(G,\nu)$ (see \cite[Proposition 1.1]{Mi1}). The heat
operator 
\[
e^{-t\tilde{\Delta}_{\nu}}\colon L^2(G,\nu)\to L^2(G,\nu)
\]
commutes with the action of $G$. Therefore, it is of the form
\begin{equation}
(e^{-t\tilde{\Delta}_{\nu}}\phi)(g)=\int_G
H_t^\nu(g^{-1}g^\prime)(\phi(g^\prime))\;dg^\prime
\end{equation}
where 
\[
H_t^\nu\colon G\to \End(V_\nu)
\]
is in $C^\infty\cap L^2$ and satisfies the covariance property
\begin{equation}
H^{\nu}_{t}(k^{-1}gk')=\nu(k)^{-1}\circ H^{\nu}_{t}(g)\circ\nu(k'),
\:\forall k,k'\in K, \forall g\in G.
\end{equation}
It follows as in \cite[Proposition 2.4]{Barbasch}  that
 $H^\nu_t$ belongs to all Harish-Chandra Schwartz spaces
$(\mathcal{C}^{q}(G)\otimes {\rm{End}}(V_{\nu}))$, $q>0$.

Now let $\left\|H_t^\nu(g)\right\|$ be the sup-norm of $H_t^\nu(g)$ in
$\End(V_\nu)$. Let $\widetilde{\Delta}_0$ be the Laplacian on functions on 
$\widetilde{X}$ and let $H^0_t$ be  the associated heat kernel as above.
We may use the principle of semigroup domination to bound
$\left\|H_t^{\nu}(g)\right\|$ by the scalar heat kernel. Indeed we have
\begin{prop}\label{Kato}
Let $\nu\in\hat{K}$. Then we have
\[
\left\|H_t^{\nu}(g)\right\|\leq H^0_t(g)
\]
for all $t\in\R^+$ and $g\in G$. 
\end{prop}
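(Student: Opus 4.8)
The plan is to prove the pointwise bound $\|H_t^\nu(g)\| \le H_t^0(g)$ via the Kato inequality together with semigroup domination, exactly as in the classical setting of generalized Laplacians on Riemannian manifolds.

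First I would recall the Weitzenb\"ock-type setup: since $\widetilde\Delta_\nu = {\widetilde\nabla^\nu}^*\widetilde\nabla^\nu$ is a pure Bochner Laplacian (no curvature term), it satisfies the Kato inequality
\begin{equation*}
|\widetilde\Delta_0\, |\phi|\,| \le \Re\langle \widetilde\Delta_\nu \phi, \phi/|\phi|\rangle_\nu
\end{equation*}
in the distributional sense, for every smooth section $\phi$ of $\widetilde E_\nu$; here $|\phi|$ is the pointwise norm and $\widetilde\Delta_0$ the scalar Laplacian on $\widetilde X$. Equivalently, $|\phi|$ is a subsolution of the scalar heat equation whenever $\phi$ solves the twisted heat equation. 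From this the semigroup domination $\|e^{-t\widetilde\Delta_\nu}\phi\|_x \le (e^{-t\widetilde\Delta_0}\|\phi\|)(x)$ follows by a standard parabolic maximum principle argument (Hess--Schrader--Uhlenbrock, or Donnelly--Li, or the account in Shubin/Berline--Getzler--Vergne); on the complete manifold $\widetilde X$ one needs stochastic completeness, which holds here since $\widetilde X$ has Ricci curvature bounded below.

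Next I would translate this operator inequality into the kernel inequality. Both $e^{-t\widetilde\Delta_\nu}$ and $e^{-t\widetilde\Delta_0}$ are $G$-invariant smoothing operators with convolution kernels $H_t^\nu$ and $H_t^0$ as displayed above. Fix $g \in G$ and let $v \in V_\nu$ be a unit vector. Applying the domination to a sequence of sections $\phi$ that approximate a delta-section concentrated at $eK$ with value $v$ (equivalently, testing against the kernels directly), one gets $|H_t^\nu(g^{-1})v|_\nu \le H_t^0(g^{-1})$ for all unit $v$, hence $\|H_t^\nu(g^{-1})\| \le H_t^0(g^{-1})$; using the covariance property and $H_t^0(g^{-1}) = H_t^0(g)$ (the scalar kernel is a function of the distance) this gives the claim. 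Alternatively, and more cleanly, one uses that $H_t^\nu(g)$ and $H_t^0(g)$ are the values at a point pair of the Schwartz kernels, and the pointwise operator norm of a Schwartz kernel of a positivity-dominated semigroup is dominated by the dominating kernel; this is precisely the content of semigroup domination at the level of kernels.

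The main obstacle is making the passage from the functional-analytic domination to the pointwise kernel estimate rigorous on the noncompact complete manifold $\widetilde X$: one must justify that $e^{-t\widetilde\Delta_\nu}$ is an integral operator with a continuous (indeed smooth) kernel to which the inequality applies at every point, and that no mass is lost at infinity (stochastic completeness). Both points are handled by the bounded-geometry/Ricci-lower-bound properties of $\widetilde X = G/K$ together with the fact, already noted in the excerpt, that $H_t^\nu$ lies in the Harish-Chandra Schwartz spaces, so all integrals converge absolutely and the kernels are honest smooth functions. Once these standard facts are in place, the inequality is immediate from Kato plus the scalar parabolic maximum principle.
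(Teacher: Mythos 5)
Your argument is correct and is essentially the paper's: both rest on Kato's inequality for the pure Bochner Laplacian plus a parabolic comparison/domination principle on the complete manifold $\widetilde X$, followed by transport back to $H^\nu_t$ via the isometries $R_g$. The paper sidesteps the delta-section approximation you flag as the "main obstacle" by applying the differential inequality directly to the pointwise operator norm $|K_\nu(t,x,y)|$ of the kernel (citing M\"uller \cite[p.~325]{Mu1}) and then invoking the Donnelly--Li comparison argument \cite[Thm.~4.3]{DL} for that scalar subsolution --- which is exactly the "cleaner" variant you gesture at in your second paragraph.
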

\begin{proof}
Let $K_\nu(t,x,y)$ be the kernel of $e^{-t\widetilde{\Delta}_\nu}$, acting in 
$L^2(\widetilde{X},\widetilde{E}_\nu)$. Denote by
$|K_\nu(t,x,y)|$ the norm of the homomorphism 
\[
K_\nu(t,x,y)\in\Hom\left((\widetilde{E}_\nu)_y,(\widetilde{E}_\nu)_x\right).
\]
It was proved in \cite[p. 325]{Mu1} that in the sense of distributions, one has
\[
\left(\frac{\partial}{\partial t}+\widetilde\Delta_{0}\right)
|K_\nu(t,x,y)|\le 0,
\]
where $\widetilde\Delta_0$ acts in the $x$-variable. Using (3.15) in \cite{Mu1}
one can proceed as in the proof of Theorem 4.3 of \cite{DL} to show that
\begin{equation}\label{kato1}
|K_\nu(t,x,y)|\le K_0(t,x,y),\quad t\in\R^+,\,x,y\in\widetilde X,
\end{equation}
where $K_0(t,x,y)$ is the kernel of $e^{-t\widetilde{\Delta}_0}$. See also 
\cite[p. 7]{Gu}. Now observe that
\[
H_t^\nu(g^{-1}g^\prime)=R_g^{-1}\circ K_\nu(t,gK,g^\prime K)\circ
R_{g^\prime}\:\textup{and}\:
H_t^0(g^{-1}g^\prime)=K_0(t,gK,g^\prime K).
\]
Since for each $x\in\widetilde{X}$, 
$R_g\colon (\widetilde{E}_{\nu})_x\to (\widetilde{E}_{\nu})_{g(x)}$ is an
isometry,
the proposition follows from \eqref{kato1}.
\end{proof}

Now we pass to the quotient $X=\Gamma\backslash\widetilde{X}$.
Let $\Delta_\nu={\nabla^\nu}^*\nabla^\nu$ be the Bochner-Laplace
operator. It is essentially self-adjoint. Let $R_\Gamma$ be the right regular 
representation of $G$ on $C^\infty(\Gamma\bs G,\nu)$. By \eqref{BLO} it follows 
that with respect to the isomorphism \eqref{iso3} we have
\begin{equation}
\Delta_\nu=-R_\Gamma(\Omega)+\nu(\Omega_K).
\end{equation}
Let $e^{-t\Delta\nu}$ be the heat semigroup of $\Delta_\nu$, acting on
$L^2(\Gamma\backslash G,\nu)$. Then $e^{-t\Delta_\nu}$ is represented by the 
smooth kernel
\begin{align}\label{KernX}
H_\nu(t,g,g^\prime):=\sum_{\gamma\in\Gamma}H_t^\nu(g^{-1}\gamma g^\prime).
\end{align}
The convergence of the series in \eqref{KernX} can be
established, for example, using Proposition \ref{Kato} and the methods from 
the proof of Proposition \ref{esthyp} below. Put
\begin{align}\label{hnu}
h_t^\nu(g):=\tr H_t^\nu(g),\quad g\in G,
\end{align}
where $\tr\colon \End(V_\nu)\to\C$ is the matrix trace.
Then the trace of the heat operator $e^{-t\Delta_\nu}$ is given by
\begin{align}\label{TrBLO}
\Tr(e^{-t\Delta_\nu})=\int_{\Gamma\bs G} \tr H_\nu(t,g,g)\;d\dot
g=\int_{\Gamma\bs G}
\sum_{\gamma\in\Gamma}h_t^\nu(g^{-1}\gamma g)d\dot g.
\end{align}
Using results of Donnelly we now prove an estimate for the heat kernel $H_t^0$
of the Laplacian $\widetilde\Delta_0$ acting on $C^\infty(\widetilde{X})$. 
\begin{prop}\label{esthyp}
There exist constants $C_0$ and
$c_0$
such that for every $t\in\left(0,1\right]$ and every $g\in G$ one has
\begin{align*}
\sum_{\substack{\gamma\in\Gamma\\ \gamma\neq
1}}H_t^0(g^{-1}\gamma g)\leq C_0 e^{-c_0/t}.
\end{align*}
\end{prop}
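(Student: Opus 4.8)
The estimate to be proved is a uniform-in-$g$ bound on the sum over nontrivial $\gamma\in\Gamma$ of the scalar heat kernel $H^0_t(g^{-1}\gamma g)$ on the symmetric space $\widetilde X$, for small time $t$. The natural strategy is to separate the distance variable: since $H^0_t$ is a radial (bi-$K$-invariant) function, $H^0_t(g^{-1}\gamma g)$ depends only on the Riemannian distance $r(g,\gamma)=d_{\widetilde X}(gK,\gamma gK)$, and the off-diagonal Gaussian decay of the heat kernel on a manifold of bounded geometry gives a pointwise upper bound of the shape $H^0_t(g^{-1}\gamma g)\le C\,t^{-d/2}e^{-r(g,\gamma)^2/(4t)}\cdot(1+\text{polynomial in }r)$ for $t\in(0,1]$. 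Here one invokes precisely the Donnelly-type estimates referenced in the text (\cite{Mu1}, (3.15), and the heat-kernel bounds there), which hold because $\widetilde X$, being a symmetric space, has bounded geometry with all covariant derivatives of the curvature bounded.

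Next I would feed in the lattice-point count. Because $\Gamma$ acts freely, properly discontinuously and cocompactly on $\widetilde X$, there is an $R_0>0$ (twice the diameter of a fundamental domain suffices) so that $r(g,\gamma)\ge \epsilon_0>0$ for every $g$ and every $\gamma\ne 1$ — this is where torsion-freeness and cocompactness are essential — and, moreover, the number of $\gamma\in\Gamma$ with $r(g,\gamma)\le \rho$ grows at most like $C_1 e^{C_2\rho}$ uniformly in $g$ (a standard volume-comparison / packing argument on a space of bounded geometry, using that distinct $\gamma$ give disjoint balls of fixed radius around $\gamma gK$). Splitting the sum into dyadic annuli $\{k\le r(g,\gamma)<k+1\}$, $k\ge \epsilon_0$, the $k$-th annulus contributes at most $C_1 e^{C_2(k+1)}\cdot C\,t^{-d/2}(1+k)^{N}e^{-k^2/(4t)}$.

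Then it is just a matter of summing the series. For $t\in(0,1]$, the factor $t^{-d/2}e^{-\epsilon_0^2/(8t)}$ already absorbs the polynomial prefactor and is bounded by $C'e^{-c_0/t}$ for a suitable $c_0>0$ (since $t^{-d/2}e^{-a/t}$ is bounded on $(0,1]$ for any $a>0$ and decays like $e^{-a/(2t)}$ after using half the exponent). The remaining half of the Gaussian, $e^{-k^2/(8t)}\le e^{-k^2/8}$ for $t\le 1$, together with the factor $e^{C_2 k}$ from the lattice count, gives a convergent geometric-type series $\sum_k (1+k)^N e^{C_2 k - k^2/8}<\infty$ whose sum is an absolute constant. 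Multiplying the two pieces yields the claimed bound $C_0 e^{-c_0/t}$, uniform in $g$ and $t\in(0,1]$; this also retroactively justifies the convergence of \eqref{KernX}.

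**Main obstacle.** The only genuinely substantive point is obtaining the off-diagonal heat-kernel upper bound with explicit, \emph{uniform} constants and a controlled polynomial correction term — i.e. a clean Gaussian estimate $H^0_t(x,y)\le C t^{-d/2}(1+d(x,y)/\sqrt t)^N e^{-d(x,y)^2/(4t)}$ valid for all $t\in(0,1]$. On a general manifold this requires care, but on the symmetric space $\widetilde X$ one has the homogeneity and the explicit spherical-function machinery (or simply Donnelly's results as cited) to supply it; everything downstream — the exponential lattice count and the summation of the series — is routine bounded-geometry bookkeeping.
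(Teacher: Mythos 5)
Your proposal is correct and follows essentially the same route as the paper's proof: the Gaussian upper bound $H^0_t(g)\leq C_1 t^{-d/2}\exp(-\rho^2(gK,1K)/(4t))$ from Donnelly, an exponential orbit-count obtained from a disjoint-balls packing argument using cocompactness and torsion-freeness, the uniform positive lower bound $c_1>0$ on $\rho(x,\gamma x)$ for $\gamma\neq 1$, and a splitting of the Gaussian exponent so that one half absorbs $t^{-d/2}$ into $e^{-c_0/t}$ while the other half gives a convergent $g$-uniform series. (The paper cites Donnelly~\cite[Theorem 3.3]{Do1} directly, which yields the Gaussian bound without the polynomial prefactor you inserted, so your estimate is slightly more conservative than necessary; the paper also phrases the volume growth via topological entropy per Manning rather than a generic bounded-geometry packing bound, but the content is identical.)
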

\begin{proof}
For $x,y\in\widetilde{X}$ let $\rho(x,y)$ denote the geodesic distance of $x,y$.
Since $K(t,gK,g^\prime K)=H^0_t(g^{-1}g^\prime)$ is the kernel of 
$e^{-t\widetilde{\Delta}_0}$, it follows from \cite[Theorem 3.3]{Do1} that there
exists a constant $C_1$ such that
for every $g\in G$ and every $t\in\left(0,1\right]$ one has
\begin{align}\label{Don}
H_t^0(g)\leq C_1 t^{-\frac{d}{2}}\exp{\left(-\frac{\rho^2(gK,1K)}{4t}\right)}.
\end{align}
Let $x\in\widetilde{X}$ and let $B_R(x)$ be the metric ball around $x$ of radius
$R$. Let $h>0$ be the topological entropy of the geodesic flow of $X$ (see
\cite{Ma}). There exists $C_2>0$ such that 
\begin{align}\label{volgr}
\vol B_R(x)\leq C_2 e^{hR},\quad R>0
\end{align}
\cite{Ma}. Since $\Gamma$ is cocompact and torsion-free, there exists an 
$\epsilon>0$ such that
$B_\epsilon(x)\cap \gamma B_\epsilon(x)=\emptyset$ for every 
$\gamma\in\Gamma-\{1\}$ and every $x\in\widetilde{X}$. Thus for every 
$x\in \widetilde{X}$ the union over
all
$\gamma B_\epsilon (x)$,
where $\gamma\in\Gamma$ is such that $\rho(x,\gamma x)\leq R$ 
is disjoint and is contained in $B_{R+\epsilon}(x)$.
Using \eqref{volgr} it follows that there
exists a constant $C_3$ such that for every $x\in\widetilde{X}$ one has
\begin{align*}
\#\{\gamma\in\Gamma\colon \rho(x,\gamma x)\leq R\}\leq C_3 e^{hR}. 
\end{align*}
Hence there exists a
constant $C_4>0$ such that for every $x\in \widetilde{X}$ one has
\begin{align}\label{sum}
\sum_{\substack{\gamma\in\Gamma\\ \gamma\neq 1}}e^{-\frac{\rho^{2}(\gamma
x,x)}{8}}\leq
C_4.
\end{align}
Now let
\begin{align*}
c_1:=\inf\{\rho(x,\gamma x)\colon \gamma\in\Gamma-\{1\},\:x\in\widetilde{X}\}.
\end{align*}
We have $c_1>0$. Using \eqref{Don} and \eqref{sum}, it follows that there are
constants $c_0>0$ and $C_0>0$ such that for every $g\in G$ and $0< t\le 1$  we 
have
\begin{align*}
\sum_{\substack{\gamma\in\Gamma\\ \gamma\neq
1}}H_t^0(g^{-1}\gamma g)\leq
C_1t^{-\frac{d}{2}}e^{-c_1^2/(8t)}\sum_{\substack{\gamma\in\Gamma\\ \gamma\neq
1}}
e^{-\rho^{2}(\gamma gK,gK)/8}\leq C_0e^{-c_0/t}.
\end{align*}
\end{proof}

\section{The analytic torsion}\label{sector}
\setcounter{equation}{0}

Let $\tau$ be an irreducible finite-dimensional representation of $G$ on $
V_{\tau}$. Let $E_{\tau}$ be the flat vector bundle over $X$ associated to the 
restriction of $\tau$ to $\Gamma$. Let $\widetilde E^\tau$ be the homogeneous
vector bundle associated to $\tau|_K$ and let $E^\tau:=\Gamma\bs \widetilde 
E^\tau$. There is a canonical isomorphism
\begin{equation}\label{vectbdl}
E^\tau\cong E_\tau
\end{equation}
\cite[Proposition 3.1]{MtM}. By \cite[Lemma 3.1]{MtM}, there exists an 
inner product $\left<\cdot,\cdot\right>$ on $V_{\tau}$ such that 
\begin{enumerate}
\item $\left<\tau(Y)u,v\right>=-\left<u,\tau(Y)v\right>$ for all 
$Y\in\mathfrak{k}$, $u,v\in V_{\tau}$
\item $\left<\tau(Y)u,v\right>=\left<u,\tau(Y)v\right>$ for all 
$Y\in\mathfrak{p}$, $u,v\in V_{\tau}$.
\end{enumerate}
Such an inner product is called admissible. It is unique up to scaling. Fix an 
admissible inner product. Since $\tau|_{K}$ is unitary with respect to this 
inner product, it induces a metric on $E^{\tau}$, and by \eqref{vectbdl} 
on $E_\tau$, which we also call 
admissible. Let $\Lambda^{p}(E_{\tau})=\Lambda^pT^*(X)\otimes E_\tau$. Let
\begin{align}\label{repr4}
\nu_{p}(\tau):=\Lambda^{p}\Ad^{*}\otimes\tau:\:K\rightarrow{\rm{GL}}
(\Lambda^{p}\mathfrak{p}^{*}\otimes V_{\tau}).
\end{align}
Then there is a canonical isomorphism
\begin{align}\label{pforms}
\Lambda^{p}(E_{\tau})\cong\Gamma\backslash(G\times_{\nu_{p}(\tau)}
\Lambda^{p}\mathfrak{p}^{*}\otimes V_{\tau}).
\end{align}
of locally homogeneous vector bundles. 
Let  $\Lambda^{p}(X,E_{\tau})$ be the space the smooth $E_{\tau}$-valued 
$p$-forms on $X$. The isomorphism \eqref{pforms} induces an isomorphism
\begin{align}\label{isoschnitte}
\Lambda^{p}(X,E_{\tau})\cong C^{\infty}(\Gamma\backslash G,\nu_{p}(\tau)),
\end{align}
where the latter space is defined as in \eqref{globsect1}. A corresponding 
isomorphism also holds for the spaces of $L^{2}$-sections. 
Let $\Delta_{p}(\tau)$ be the 
Hodge-Laplacian on $\Lambda^{p}(X,E_{\tau})$ with respect to the admissible 
metric in $E_\tau$. Let $R_\Gamma$ denote the right regular representation 
of $G$ in $L^2(\Gamma\bs G)$. By \cite[(6.9)]{MtM} it follows that with
respect to the isomorphism \eqref{isoschnitte} one has
\[
\Delta_{p}(\tau)f=-R_\Gamma(\Omega)f+\tau(\Omega)\Id f, \: f\in
C^{\infty}(\Gamma\backslash G,\nu_{p}(\tau)).
\]
We remark that in \cite{MtM} it is not assumed that $G$ does not have compact
factors, see the remark on page 372 of \cite{MtM},  
and so we do not make this assumption either. Let
\begin{align}\label{heattor1}
K(t,\tau):=\sum_{p=1}^{d}(-1)^{p}p\Tr(e^{-t\Delta_{p}(\tau)}).
\end{align}
and
\begin{equation}\label{constanttau}
h(\tau):=\sum_{p=1}^{d}(-1)^p p \dim H^p(X,E_\tau).
\end{equation}
Then $K(t,\tau)-h(\tau)$ decays exponentially as $t\to\infty$ and it follows 
from  \eqref{anator1} that
\begin{align}\label{anator2}
\log{T_{X}(\tau)}=\frac{1}{2}\frac{d}{ds}\left(\frac{1}{\Gamma(s)}
\int_{0}^{\infty}t^{s-1}(K(t,\tau)-h(\tau))\;dt\right)\bigg|_{s=0},
\end{align}
where the right hand side is defined near $s=0$ by analytic continuation of 
the Mellin transform. Let
$\widetilde{E}_{\nu_p(\tau)}:=G\times_{\nu_p(\tau)}\Lambda^p\mathfrak{p}
^*\otimes V_{\tau}$ and let
$\widetilde{\Delta}_p(\tau)$ be the lift of $\Delta_p(\tau)$ to
$C^\infty(\widetilde{X},\widetilde{E}_{\nu_p(\tau)})$.
Then again it follows from \cite[(6.9)]{MtM} that on $C^\infty(G,\nu_p(\tau))$
one has
\begin{align}\label{kuga}
\tilde\Delta_p(\tau)=-R_\Gamma(\Omega)+\tau(\Omega)\Id.
\end{align}
Let $e^{-t\tilde\Delta_p(\tau)}$
be the corresponding heat semigroup on $L^2(G,\nu_p(\tau))$. 
It is
a smoothing operator which commutes with the action of $G$. Therefore, it is of
the form
\begin{displaymath}
 \left( e^{-t\tilde\Delta_p(\tau)}\phi\right)(g)=\int_G  
H^{\tau,p}_t(g^{-1}g')\phi(g') \;dg',\quad 
\phi\in(L^2(G,\nu_p(\tau)),\;\;g\in G,
\end{displaymath}
where the kernel
\begin{align}\label{DefHH}
H^{\tau,p}_t\colon G\to\End(\Lambda^p\mathfrak p^*\otimes
V_\tau)
\end{align} 
belongs to $C^\infty\cap L^2$ and  satisfies the covariance property
\begin{equation}\label{covar}
H^{\tau,p}_t(k^{-1}gk')=\nu_p(\tau)(k)^{-1} H^{\tau,p}_t(g)\nu_p(\tau)(k')
\end{equation}
with respect to the representation \eqref{repr4}.
Moreover, for all $q>0$ we have 
\begin{equation}\label{schwartz1}
H^{\tau,p}_t \in (\mathcal{C}^q(G)\otimes
\End(\Lambda^p\pL^*\otimes V_\tau))^{K\times K}, 
\end{equation}
where $\mathcal{C}^q(G)$ denotes Harish-Chandra's $L^q$-Schwartz space. 
The proof is similar to the proof of Proposition 2.4 in \cite{Barbasch}. 
Now we come to the heat kernel of $\Delta_p(\tau)$. First 
the integral kernel of $e^{-t\Delta_p(\tau)}$, regarded as an operator in 
$L^2(\Gamma\backslash G,\nu_p(\tau))$, is given by 
\begin{align}\label{kernelx}
H^{\tau,p}(t;g,g'):=\sum_{\gamma\in\Gamma}{H}^{\tau,p}_{t}(g^{-1}\gamma g'),
\end{align}
As in section \ref{secBLO} this series converges absolutely and locally 
uniformly. Therefore the trace of the heat operator $e^{-t\Delta_p(\tau)}$ is 
given by
\[
\Tr\left(e^{-t\Delta_p(\tau)}\right)=\int_{\Gamma\bs G}\tr
H^{\tau,p}(t;g,g)\;d\dot g,
\]
where $\tr$ denotes the trace $\tr\colon \End(V_\nu)\to \C$. Let
\begin{align}\label{Defh}
{h}^{\tau,p}_{t}(g):=\tr{H}^{\tau,p}_{t}(g).
\end{align}
Using \eqref{kernelx} we obtain
\begin{equation}\label{TrDeltap}
\Tr\left(e^{-t\Delta_p(\tau)}\right)=\int_{\Gamma\bs G}\sum_{\gamma\in\Gamma} 
{h}^{\tau,p}_{t}(g^{-1}\gamma g)\,d\dot g.
\end{equation}
Put
\begin{equation}\label{alter}
  k_t^\tau=\sum_{p=1}^d(-1)^pp\, h^{\tau,p}_t.
\end{equation}
Then it follows that
\begin{equation}\label{anator3}
K(t,\tau)=\int_{\Gamma\bs G}\sum_{\gamma\in\Gamma} 
k_{t}^\tau(g^{-1}\gamma g)\,d\dot g.
\end{equation}
Let $R_\Gamma$ be the right regular representation of $G$ on $L^2(\Gamma\bs G)$.
Then \eqref{anator3} can be written as
\begin{equation}\label{trace8}
K(t,\tau)=\Tr R_\Gamma(k^\tau_t).
\end{equation}
We shall now compute the Fourier transform of $k^\tau_t$.
To begin with let $\pi$ be an admissible unitary 
representation of $G$ on a Hilbert space $\cH_\pi$. Set
\[
\tilde \pi(H^{\tau,p}_t)=\int_G \pi(g)\otimes H^{\tau,p}_t(g)\,dg.
\]
This defines a bounded operator on 
$\cH_\pi\otimes \Lambda^p\mathfrak p^*\otimes V_\tau$. As in 
\cite[pp. 160-161]{Barbasch} it follows from 
\eqref{covar} that relative to the splitting
\[
\cH_\pi\otimes \Lambda^p\mathfrak p^*\otimes V_\tau=
\left(\cH_\pi\otimes \Lambda^p\mathfrak p^*\otimes V_\tau\right)^K\oplus
\left[\left(\cH_\pi\otimes \Lambda^p\mathfrak p^*\otimes V_\tau\right)^K
\right]^\perp,
\]
$\tilde \pi(H^{\tau,p}_t)$ has the form
\[
\tilde \pi(H^{\tau,p}_t)=\begin{pmatrix}\pi(H^{\tau,p}_t)& 0\\ 0& 0
\end{pmatrix}
\]
with $\pi(H^{\tau,p}_t)$ acting on $\left(\cH_\pi\otimes 
\Lambda^p\mathfrak p^*\otimes V_\tau\right)^K$. Using \eqref{kuga} it follows 
as in \cite[Corollary 2.2]{Barbasch} that
\begin{equation}\label{equtrace2}
\pi(H^{\tau,p}_t)=e^{t(\pi(\Omega)-\tau(\Omega))}\Id
\end{equation}
on $\left(\cH_\pi\otimes \Lambda^p\mathfrak p^*\otimes V_\tau\right)^K$.
Let $\{\xi_n\}_{n\in\N}$ and $\{e_j\}_{j=1}^m$ be orthonormal bases of $\cH_\pi$
and  $\Lambda^p\mathfrak p^*\otimes V_\tau$, respectively. Then we have
\begin{equation}\label{equtrace1}
\begin{split}
\Tr\pi(H_t^{\tau,p})&=\sum_{n=1}^\infty\sum_{j=1}^m\langle\pi(H_t^{\tau,p})
(\xi_n\otimes e_j),(\xi_n\otimes e_j)\rangle\\
&=\sum_{n=1}^\infty\sum_{j=1}^m\int_G\langle\pi(g)\xi_n,\xi_n\rangle
\langle H_t^{\tau,p}(g)e_j,e_j\rangle\,dg\\
&=\sum_{n=1}^\infty\int_G h_t^{\tau,p}(g)\langle\pi(g)\xi_n,\xi_n\rangle\;dg\\
&=\Tr\pi(h_t^{\tau,p}).
\end{split}
\end{equation}
Let $\pi\in\hat G$ and let $\Theta_\pi$ denote its character.
Then it follows
from \eqref{alter}, \eqref{equtrace2} and \eqref{equtrace1} that
\begin{equation}\label{equtrace3}
\Theta_\pi(k_t^\tau)=e^{t(\pi(\Omega)-\tau(\Omega))}\sum_{p=1}^d (-1)^p p\cdot
\dim(\cH_\pi\otimes\Lambda^p\pL^*\otimes V_\tau)^K.
\end{equation}
Now we consider the case of a principle series representation. Let $Q$ be a
standard
cuspidal parabolic subgroup.  Let $Q=MAN$ be the Langlands
decomposition of $Q$. Denote by $\aL$ the Lie algebra of $A$. Let $K_M=K\cap M$.
Let $(\xi,W_\xi)$ be a discrete series representation of 
$M$ and let $\nu\in \aL^*_{\C}$. Let $\pi_{\xi,\nu}$ be the induced 
representation and let $\Theta_{\xi,\nu}$ be the character of $\pi_{\xi,\nu}$
(see
section \ref{secprel}). 
\begin{prop}\label{equcharac}
Let $Y\in\aL$ be a unit vector and let $\pL_Y$ be the orthogonal complement
of $Y$ in $\pL$. Then
\begin{enumerate}
\item[(i)] $\Theta_{\xi,\nu}(k_t^\tau)=e^{t(\pi_{\xi,\nu}(\Omega)-\tau(\Omega))}
\dim\left(W_\xi\otimes(\Lambda^{\mathrm{odd}}\pL_Y^*-\Lambda^{\mathrm{ev}}
\pL_Y^*)\otimes 
V_\tau\right)^{K_M}$,
\item[(ii)] $\Theta_{\xi,\nu}(k_t^\tau)=0$ if $\dim\aL_\qL\ge 2$.
\end{enumerate}
\end{prop}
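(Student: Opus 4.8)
The plan is to combine the general character formula \eqref{equtrace3} with the branching of the representation $\nu_p(\tau) = \Lambda^p\Ad^*\otimes\tau$ of $K$ upon restriction to $K_M$, exploiting the Frobenius reciprocity for induced representations. First I would apply \eqref{equtrace3} to $\pi = \pi_{\xi,\nu}$, which gives
\[
\Theta_{\xi,\nu}(k_t^\tau) = e^{t(\pi_{\xi,\nu}(\Omega)-\tau(\Omega))}\sum_{p=1}^d (-1)^p p\cdot \dim\left(\cH_{\xi,\nu}\otimes\Lambda^p\pL^*\otimes V_\tau\right)^K.
\]
The key step is to compute $\dim(\cH_{\xi,\nu}\otimes\Lambda^p\pL^*\otimes V_\tau)^K$. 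Since $\pi_{\xi,\nu} = \Ind_Q^G(\xi\otimes e^\nu\otimes\Id)$ is induced, restriction to $K$ gives $\cH_{\xi,\nu}|_K \cong \Ind_{K_M}^K(\xi|_{K_M})$ (compact-picture realization), so by Frobenius reciprocity the $K$-invariants of $\cH_{\xi,\nu}\otimes(\Lambda^p\pL^*\otimes V_\tau)$ are isomorphic to the $K_M$-invariants of $W_\xi\otimes\Lambda^p\pL^*\otimes V_\tau$, where now $\Lambda^p\pL^*$ is viewed as a $K_M$-module by restriction of $\Lambda^p\Ad^*$.

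Next I would decompose $\pL$ as a $K_M$-module. Since $Y\in\aL$ is fixed by $K_M$ (as $K_M = K\cap M$ centralizes $\aL$), one has a $K_M$-orthogonal splitting $\pL = \pL_Y \oplus \aL_0$ where $\aL_0 = \R Y$ is the trivial one-dimensional $K_M$-module spanned by $Y$; more precisely when $\dim\aL_\qL \geq 2$ one gets several trivial summands, which is what forces case (ii). Dualizing, $\Lambda^p\pL^* \cong \Lambda^p\pL_Y^* \oplus (\Lambda^{p-1}\pL_Y^*\otimes \aL_0^*)$ when $\dim\aL = 1$ (equivalently $\dim\aL_\qL = 1$, so that the split component relevant to $\pL$ is one-dimensional). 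Substituting this into the alternating sum $\sum_p (-1)^p p\cdot(\cdots)$, the standard telescoping identity $\sum_p (-1)^p p\,(a_p + a_{p-1}) = -\sum_p (-1)^p a_p$ collapses the sum, and one is left with
\[
\sum_{p=1}^d (-1)^p p\cdot\dim\left(W_\xi\otimes\Lambda^p\pL^*\otimes V_\tau\right)^{K_M} = \dim\left(W_\xi\otimes(\Lambda^{\mathrm{odd}}\pL_Y^* - \Lambda^{\mathrm{ev}}\pL_Y^*)\otimes V_\tau\right)^{K_M},
\]
which gives (i). For (ii), when $\dim\aL_\qL\geq 2$ the relevant split part of $\pL$ contributes at least two trivial $K_M$-factors, so $\Lambda^\bullet\pL^*$ is (as a virtual $K_M$-module) of the form $\Lambda^\bullet W \otimes \Lambda^\bullet(\R^{k})$ with $k\geq 2$ trivial; the factor $\Lambda^\bullet(\R^k)$ has vanishing Euler characteristic $\sum(-1)^p\binom{k}{p} = 0$, and a short bookkeeping with the weight $p$ (splitting $p$ according to how many of the $k$ trivial directions are used) shows the whole alternating sum vanishes.

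The main obstacle I anticipate is getting the $K_M$-module structure of $\pL$ exactly right — in particular identifying which factor of $\pL$ is "trivial" and keeping careful track of the difference between $\aL$, its part $\aL_\qL$ coming from $\qL$, and the ambient splitting, so that the telescoping is applied to the correct graded pieces. The representation-theoretic identity \eqref{equtrace3} and the Frobenius reciprocity step are essentially formal once set up; the combinatorial collapse of $\sum(-1)^p p(\cdots)$ is routine but must be done with the precise multiplicity of trivial summands in hand. I would also need to confirm that the compact-picture isomorphism $\cH_{\xi,\nu}|_K\cong\Ind_{K_M}^K(\xi|_{K_M})$ holds verbatim in the reductive (possibly disconnected) $M$ setting, but this is standard (cf.\ \cite{Kn1}).
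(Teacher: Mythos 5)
Your proposal is correct and, for part (i), follows the paper's argument essentially verbatim: apply \eqref{equtrace3}, use Frobenius reciprocity to pass to $K_M$-invariants, decompose $\pL^*=\R Y^*\oplus\pL_Y^*$ as a $K_M$-module (valid for any $\dim\aL$, not only $\dim\aL=1$, since one always splits off the single trivial line $\R Y$), and telescope $\sum_p(-1)^p p(a_p+a_{p-1})=-\sum_p(-1)^p a_p$ in the Grothendieck ring. For part (ii) your route diverges mildly from the paper's: you split off the full $k$-dimensional trivial $K_M$-submodule coming from $\aL$, factor the exterior algebra as $\Lambda^\bullet(\pL')^*\otimes\Lambda^\bullet(\R^k)^*$, and invoke the binomial identities $\sum_j(-1)^j\binom{k}{j}=0$ and $\sum_j(-1)^j j\binom{k}{j}=0$ for $k\ge 2$ to kill both pieces of the weighted alternating sum. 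The paper instead picks a single nonzero $H\in\aL\cap\pL_Y$ (which exists exactly when $\dim\aL\ge 2$) and observes that $\varepsilon(H)+i(H)$ is an invertible $K_M$-intertwiner between $\Lambda^{\mathrm{ev}}\pL_Y^*$ and $\Lambda^{\mathrm{odd}}\pL_Y^*$, so the virtual module in (i) vanishes outright. Both vanishing arguments are correct and exploit the same underlying fact — that $\aL$ gives trivial $K_M$-directions inside $\pL$; the paper's Clifford-type intertwiner is slightly more structural, while your binomial computation is more elementary bookkeeping. One small clarification: your parenthetical that the decomposition $\Lambda^p\pL^*\cong\Lambda^p\pL_Y^*\oplus\Lambda^{p-1}\pL_Y^*$ holds ``when $\dim\aL=1$'' is unnecessarily restrictive; it holds unconditionally, and (ii) is then an additional vanishing statement about the right-hand side of (i).
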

\begin{proof} 
By Frobenius reciprocity \cite[p. 208]{Kn1} and \eqref{equtrace3} we get
\[
\Theta_{\xi,\nu}(k_t^\tau)=e^{t(\pi_{\xi,\nu}(\Omega)-\tau(\Omega))}\sum_{p=1}^d
(-1)^p p
\dim\left(W_\xi\otimes\Lambda^p\pL^*\otimes V_\tau\right)^{K_M}.
\]
Now 
\[
\pL^*=\R Y^*\oplus\pL^*_Y
\]
as $K_M$-module. Therefore, in the Grothendieck ring of $K_M$ 
we have
\begin{equation}\label{equp}
\begin{split}
\sum_{p=1}^d(-1)^p p \Lambda^p\pL^*&=\sum_{p=1}^d(-1)^p p\Lambda^p\pL_Y^*\oplus
\Lambda^{p-1}\pL_Y^*\\
&=\sum_{p=1}^d(-1)^p
p\Lambda^p\pL_Y^*+\sum_{p=0}^{d-1}(-1)^{p+1}(p+1)\Lambda^p\pL_Y^*\\
&=\sum_{p=0}^d(-1)^{p+1}\Lambda^p\pL_Y^*.
\end{split}
\end{equation}
Tensoring with $W_\xi$ and $V_\tau$ and taking $K_M$-invariants, we 
obtain (i). 

To prove (ii), suppose that there is a nonzero $H\in\aL\cap \pL_Y$. Since
$M$ centralizes $H$, $\varepsilon(H)+i(H)$ is a $K_M$ intertwining
operator between $\Lambda^{\mathrm{ev}}\pL_Y^*$ and
$\Lambda^{\mathrm{odd}}\pL_Y^*$,
and 
non-trivial since $H\neq0$. Hence $\Lambda^{\mathrm{ev}}\pL_Y^*$ and 
$\Lambda^{\mathrm{odd}}\pL_Y^*$ are equivalent as $K_M$-modules and (ii)
follows.
\end{proof}

\begin{prop}\label{vanish1}
Assume that $\delta(\tilde{X})\geq 2$ or assume that $\tilde{X}$ is
even-dimensional. 
Then $T_X(\tau)=1$ 
for all finite-dimensional irreducible representations $\tau$ of $G$.
\end{prop}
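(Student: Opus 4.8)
The plan is to express $\log T_X(\tau)$ via the heat-trace formula \eqref{anator2} and to show that, under the stated hypotheses, the integrand $K(t,\tau)$ vanishes identically, which forces $T_X(\tau)=1$ (and in particular forces $h(\tau)=0$, i.e. acyclicity, so that the Mellin transform makes sense without subtracting $h(\tau)$). Using \eqref{trace8}, $K(t,\tau)=\Tr R_\Gamma(k_t^\tau)$, and since $\Gamma$ is cocompact the right regular representation $R_\Gamma$ decomposes discretely as $\bigoplus_{\pi\in\hat G}m_\Gamma(\pi)\,\pi$ with finite multiplicities. Hence
\[
K(t,\tau)=\sum_{\pi\in\hat G}m_\Gamma(\pi)\,\Theta_\pi(k_t^\tau),
\]
and it suffices to prove that $\Theta_\pi(k_t^\tau)=0$ for \emph{every} $\pi\in\hat G$ that can occur, i.e. every $\pi$ with $(\cH_\pi\otimes\Lambda^\bullet\pL^*\otimes V_\tau)^K\neq 0$; equivalently, by \eqref{equtrace3}, it suffices to show
\[
\sum_{p=0}^d(-1)^p p\,\dim\bigl(\cH_{\pi,K}\otimes\Lambda^p\pL^*\otimes V_\tau\bigr)^K=0
\]
for all such $\pi$. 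The point is that $(\cH_{\pi,K}\otimes\Lambda^p\pL^*\otimes V_\tau)^K\cong H^p(\gL,K;\cH_{\pi,K}\otimes V_\tau)$ only up to the usual correction, but more to the point the alternating-with-weight-$p$ sum is exactly the kind of quantity that vanishes when there is a degree-shifting symmetry.

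First I would reduce to the case where $\pi$ contributes to $(\gL,K)$-cohomology with coefficients in $V_\tau$: by a standard argument (Kuga's lemma, using \eqref{equtrace2} and the fact that the Casimir acts by $\pi(\Omega)-\tau(\Omega)$ on the relevant space, cf. \cite[Corollary 2.2]{Barbasch}), the only $\pi$ for which $(\cH_{\pi,K}\otimes\Lambda^p\pL^*\otimes V_\tau)^K$ is nonzero for some $p$ are those with $\pi(\Omega)=\tau(\Omega)$, and for these the Hodge-theoretic identification gives $(\cH_{\pi,K}\otimes\Lambda^p\pL^*\otimes V_\tau)^K\cong H^p(\gL,K;\cH_{\pi,K}\otimes V_\tau)$. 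Now invoke the Vogan--Zuckerman / Borel--Wallach description of $(\gL,K)$-cohomology: whenever it is nonzero it is a free module of rank one over $\Lambda^\bullet(\aL_\qL^*)$ placed in the degrees $[R_0, R_0+\delta]$ where $\delta=\dim\aL_\qL$ and $R_0$ is the bottom degree, so that $\dim H^p=\binom{\delta}{p-R_0}$. The crucial consequence is that this Poincaré polynomial is \emph{symmetric} about the middle degree $R_0+\delta/2$, and therefore the weighted alternating sum $\sum_p(-1)^p p\,\dim H^p$ equals $(R_0+\delta/2)\sum_p(-1)^p\dim H^p=(R_0+\delta/2)\cdot\chi$, where $\chi=\sum_p(-1)^p\binom{\delta}{p-R_0}=0$ as soon as $\delta\geq 1$.

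This already handles the hypothesis $\delta(\widetilde X)\geq 2$ (indeed even $\delta(\widetilde X)\geq 1$ for the cohomological $\pi$'s), but one must separately deal with the trivial/one-dimensional contributions and, in the even-dimensional case, with $\delta=0$. For $\delta(\widetilde X)=0$ and $\widetilde X$ even-dimensional: then $\hat G_d\neq\emptyset$ and the cohomological representations are the discrete series, for which Proposition~\ref{PropDS} says $H^p(\gL,K;\cH_{\pi,K}\otimes V_\tau)$ is concentrated in the single degree $p=n=d/2$; hence $\sum_p(-1)^p p\dim H^p=(-1)^n n\cdot(\#\text{such }\pi)$, which is \emph{not} obviously zero from cohomology alone. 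The resolution is that the full $K(t,\tau)=\Tr R_\Gamma(k_t^\tau)$ must be compared against the Poincaré-duality symmetry of the \emph{de Rham} complex: replacing $p$ by $d-p$ and using the Hodge star together with $H^p(X,E_\tau)\cong H^{d-p}(X,E_{\check\tau})$ and the admissibility of the metric, one gets $K(t,\tau)=(-1)^d K(t,\tau)$ after re-indexing, hence $K(t,\tau)=0$ when $d$ is even. Thus the even-dimensional case follows from this global duality argument, while $\delta\geq 2$ (really $\delta\ge 2$ so that $\dim\aL_\qL\ge 2$ is available even for cuspidal parabolics via Proposition~\ref{equcharac}(ii)) follows from the representation-theoretic vanishing above. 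I would organize the proof as: (a) discrete decomposition and reduction to $\Theta_\pi(k_t^\tau)=0$; (b) the $(\gL,K)$-cohomology symmetry argument giving vanishing when the relevant $\aL_\qL$ has dimension $\geq 1$, supplemented by Proposition~\ref{equcharac} for the induced-from-cuspidal contributions; (c) the Poincaré-duality argument for $d$ even; (d) conclude acyclicity and $T_X(\tau)=1$ from \eqref{anator2}.

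The main obstacle I anticipate is step (b)/(c) bookkeeping: one has to be careful that the symmetry of the $(\gL,K)$-cohomology Poincaré polynomial is genuinely about the correct center, and — more delicately — that \emph{every} $\pi$ occurring in $R_\Gamma$ and contributing to the sum actually has $\delta(\pi):=\dim\aL_\qL\geq 1$ precisely when $\delta(\widetilde X)\geq 1$; equivalently, that $\delta(\widetilde X)\ge 2$ rules out the dangerous $\delta(\pi)\le 1$ cohomological representations, whereas when $\delta(\widetilde X)=1$ (odd-dimensional case, excluded here) there genuinely are $\pi$ with $\delta(\pi)=1$ forcing the non-vanishing that Theorem~\ref{th-main1}(ii) then measures. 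Pinning down this dichotomy — relating $\delta(\widetilde X)=\rk_\C G-\rk_\C K$ to the possible $\dim\aL_\qL$ of $\theta$-stable parabolics contributing cohomology — is the technical heart of the argument; everything else is the heat-equation formalism already set up in Section~\ref{sector}.
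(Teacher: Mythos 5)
Your overall strategy — discrete decomposition of $R_\Gamma$, reduction to $\Theta_\pi(k_t^\tau)$, then a symmetry argument — is the same high-level scheme the paper uses, but the implementation differs substantially and has several genuine gaps.

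For $\delta(\widetilde X)\ge 2$, the paper avoids Vogan--Zuckerman entirely. It invokes Delorme's result that the Grothendieck group of admissible characters is spanned by characters of the standard induced representations $\pi_{\xi,\lambda}$ from \emph{cuspidal} parabolics $Q=MAN$, and then applies Proposition~\ref{equcharac}(ii), which kills $\Theta_{\xi,\lambda}(k_t^\tau)$ as soon as $\dim\aL\ge 2$. The inequality $\dim\aL\ge\delta(\widetilde X)$ for cuspidal parabolics is automatic because the fundamental Cartan subalgebra is the maximally compact one, so the whole "bookkeeping" you flag as the technical heart simply never arises. By contrast, your route has three unrepaired weak points. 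First, your symmetry identity $\sum_p(-1)^p p\,\dim H^p=(R_0+\delta/2)\chi$ is only valid when $\delta=\dim\aL_\qL$ is \emph{even}; for $\delta=1$ a direct computation gives $\sum_p(-1)^p p\binom{1}{p-R_0}=(-1)^{R_0+1}\neq 0$, so vanishing genuinely requires $\delta\ge 2$, not $\delta\ge 1$ as you write. Second, the structure result "$H^\bullet$ is free of rank one over $\Lambda^\bullet(\aL_\qL^*)$" is not true for general cohomological unitary $\pi$ without additional normalization and hypotheses on the Levi; one needs to be much more careful than you are here. Third, you never establish that $\delta(\widetilde X)\ge 2$ forces $\dim\aL_\qL\ge 2$ for every $\theta$-stable parabolic occurring — you explicitly leave it open.

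For $d$ even, your Poincar\'e duality identity $K(t,\tau)=(-1)^dK(t,\tau)$ is not correct and in any case would be vacuous for $d$ even. The Hodge star relates $\Delta_p(\tau)$ to $\Delta_{d-p}(\check\tau)$, so the re-indexing gives a relation between $K(t,\tau)$ and $K(t,\check\tau)$, plus a $t$-independent term coming from $\sum_p(-1)^p\Tr(e^{-t\Delta_p})=\chi$; it does not directly force $K(t,\tau)=0$. The paper instead uses the $K$-module isomorphism $\Lambda^p\pL^*\cong\Lambda^{d-p}\pL^*$ to get the clean identity $\sum_p(-1)^p p\,\Lambda^p\pL^*=\tfrac d2\sum_p(-1)^p\Lambda^p\pL^*$ in $R(K)$, concludes via \eqref{eqcoh} that $\Theta_\pi(k_t^\tau)$ is $t$-independent for each $\pi$, and then that $K(t,\tau)$ is constant; this constant must equal $h(\tau)$ and \eqref{anator2} gives $T_X(\tau)=1$. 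Note that the paper's conclusion here is that $K(t,\tau)$ is \emph{constant}, not that it vanishes — which is what happens when $\delta=0$ and discrete series contribute nontrivially.

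In short: (a), (b), (d) of your outline are the right shape, but as written both (b) and (c) contain errors, and the "technical heart" you correctly identify is exactly what the paper's Delorme-plus-cuspidal-parabolic route finesses.
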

\begin{proof}
Let 
\[
R_\Gamma=\bigoplus_{\pi\in\hat G} m_\Gamma(\pi)\pi
\]
be the decomposition of the right regular representation $R_\Gamma$ of $G$ on
$L^2(\Gamma\backslash G)$, see \cite[section 1]{Wa0}. Now insert $k_t^\pi$ 
on both sides and apply the trace. If we recall that
$\Theta_\pi(k_t^\tau)=\tr\pi(k_t^\tau)$ and use \eqref{trace8}, we get
\begin{align}\label{serK}
K(t,\tau)=\sum_{\pi\in\hat G} m_\Gamma(\pi)\Theta_\pi(k_t^\tau).
\end{align}
The series on the right hand side is absolutely convergent. 
First assume that $\delta(X)\geq 2$. By \cite[section 2.2]{Del} the Grothendieck
group of all 
admissible representations of $G$ is generated by the representations
$\pi_{\xi,\lambda}$ , where 
$\pi_{\xi,\lambda}$ is associated to some standard cuspidal parabolic subgroup
$Q$ of $G$ as in \eqref{indrep}. 
Since $\delta(X)\geq
2$ one has 
$\Theta_{\xi,\lambda}(k_t^\tau)=0$ for every such representation  by Proposition
\ref{equcharac}. Thus one has $\Theta_\pi(k_t^\tau)=0$ 
for every irreducible unitary representation of $G$. By \eqref{serK} it follows
that
$K(t,\tau)=0$. Let $h(\tau)$ be as in \eqref{constanttau}. Since 
$K(t,\tau)-h(\tau)$ decays exponentially as $t\to\infty$, 
it follows that $K(t,\tau)-h(\tau)=0$ and using \eqref{anator2}, the first
statement follows. 

Now assume that $d=\dim \widetilde X$ is even. Note that as $K$-modules we have
\[
\Lambda^p\pL^*\cong\Lambda^{d-p}\pL^*,\quad p=0,\dots,d.
\]
Since $d$ is even, it follows that in the representation ring $R(K)$ we have 
the following equality
\[
\sum_{p=0}^d (-1)^pp\Lambda^p\pL^*=\frac{d}{2}\sum_{p=0}^d(-1)^p\Lambda^p\pL^*.
\]
Let $(\pi,\cH_\pi)\in\hat G$. Then it follows from 
\eqref{equtrace3} that
\[
\Theta_\pi(k_t^\tau)=\frac{d}{2}e^{t(\pi(\Omega)-\tau(\Omega)}\sum_{p=0}^d
(-1)^p
\dim(\cH_\pi\otimes\Lambda^p\pL^*\otimes V_\tau)^K.
\]
Let $\cH_{\pi,K}$ be the subspace of $\cH_\pi$ consisting of all smooth
$K$-finite
vectors. Then
\[
(\cH_{\pi,K}\otimes\Lambda^p\pL^*\otimes V_\tau)^K=(\cH_\pi\otimes\Lambda^p\pL^*
\otimes V_\tau)^K.
\]
Thus the $(\gL,K)$-cohomology $H^*(\gL,K;\cH_{\pi,K}\otimes V_\tau)$ is 
computed from the Lie algebra cohomology complex $([\cH_\pi\otimes\Lambda^p\pL^*
\otimes V_\tau]^K,d)$ (see \cite{BW}). Using the Poincar\'e principle we get
\begin{equation}\label{charcoh}
\Theta_\pi(k_t^\tau)=\frac{d}{2}e^{t(\pi(\Omega)-\tau(\Omega)}\sum_{p=0}^d
(-1)^p
\dim H^p(\gL,K;\cH_{\pi,K}\otimes V_\tau).
\end{equation}
Now by \cite[II.3.1, I.5.3]{BW} we have 
\begin{equation}\label{eqcoh}
H^p(\gL,K;\cH_{\pi,K}\otimes V_\tau)=\begin{cases}
[\cH_\pi\otimes\Lambda^p\pL^*\otimes V_\tau]^K,& \pi(\Omega)=\tau(\Omega);\\
0,& \pi(\Omega)\neq \tau(\Omega).
\end{cases}
\end{equation}
Hence for every $\pi\in \hat G$ one has 
$\Theta_\pi(k_t^\tau)\in \Z$ and $\Theta_\pi(k_t^\tau)$ is independent of $t>0$.
Since the series on the right hand side of \eqref{serK} converges absolutely, 
there exist only finitely many $\pi\in\hat G$ 
with $m_\Gamma(\pi)\neq 0$ and $\Theta_\pi(k_t^\tau)\neq 0$. Thus
$K(t,\tau)$ is independent of $t>0$. Let $h(\tau)$ be defined by 
\eqref{constanttau}. Then $K(t,\tau)-h(\tau)=O(e^{-ct})$ as $t\to\infty$. Hence
$K(t,\tau)=h(\tau)$. By \eqref{anator2} it follows that $T_X(\tau)=1$.
\end{proof}

\section{$L^2$-torsion}\label{secl2-tor}
\setcounter{equation}{0}

In this section we study the $L^2$-torsion $T^{(2)}_X(\tau)$. For its definition
we refer to \cite{Lo}, \cite{Mat}. Actually, in \cite{Lo} and \cite{Mat} only
the case of the trivial
representation $\tau_0$ has been discussed. However the extension to a 
nontrivial $\tau$ is straight forward. The definition is based on the 
$\Gamma$-trace of the heat operator $e^{-t\widetilde\Delta_p(\tau)}$ on the
universal covering $\widetilde X$ (see \cite{Lo}, \cite{Mat}). For our purposes,
it suffices
to introduce the $L^2$-torsion
for representations $\tau$ on $\widetilde{X}$ which satisfy 
$\tau_\theta\not\cong\tau$. \\
Let
$h_t^{\tau,p}$ be the
function defined by 
\eqref{Defh}. By homogeneity it follows that in our case the 
$\Gamma$-trace is given by
\begin{equation}\label{gammatr}
\Tr_\Gamma\left(e^{-t\widetilde\Delta_p(\tau)}\right)=\vol(X) h_t^{\tau,p}(1).
\end{equation} 
In order to define the $L^2$-torsion we need to know the asymptotic behavior
of $h_t^{\tau,p}(1)$ as $t\to 0$ and $t\to\infty$. First we consider the 
behavior as $t\to 0$. Using \eqref{TrDeltap} we have
\begin{equation}\label{trace19}
\vol(X)h_t^{\tau,p}(1)=\Tr\left(e^{-t\Delta_p(\tau)}\right)-
\int_{\Gamma\bs G}\sum_{\gamma\in\Gamma-\{1\}}h_t^{\tau,p}(g^{-1}\gamma g)\,
d\dot g.
\end{equation}
To deal with the second term on the right, we consider the representation
$\nu_p(\tau)$  of $K$ which is defined by \eqref{repr4}, and for $p=0,\dots,n$ 
we put
\begin{equation}\label{endom2}
E_p(\tau):=\tau(\Omega)\Id-\nu_p(\tau)(\Omega_K),
\end{equation}
which we regard as endomorphism of $\Lambda^p\pL^*\otimes V_\tau$. It defines an
endomorphism of $\Lambda^pT^*(X)\otimes E_\tau$. By \eqref{BLO} and 
\eqref{kuga} we have 
\begin{equation}\label{bochhodge1}
\Delta_p(\tau)=\Delta_{\nu_p(\tau)}+E_p(\tau).
\end{equation}
Let $\nu_p(\tau)=\oplus_{\sigma\in\hat K}m(\sigma)\sigma$ be the decomposition
of 
$\nu_p(\tau)$ into irreducible representations. This induces a corresponding
decomposition of the homogeneous vector bundle
\[
\widetilde E_{\nu_p(\tau)}=\bigoplus_{\sigma\in\hat K}m(\sigma)\widetilde
E_\sigma.
\]
With respect to this decomposition we have
\begin{equation}\label{bhdecomp1}
E_p(\tau)=\bigoplus_{\sigma\in\hat K}
m(\sigma)\left(\tau(\Omega)-\sigma(\Omega_K)\right)\Id_{V_\sigma},
\end{equation}
where $\sigma(\Omega_K)$ is the Casimir eigenvalue of $\sigma$ and $V_\sigma$ is
the representation space of $\sigma$, and
\begin{equation}\label{bhdecomp2}
\Delta_{\nu_p(\tau)}=\bigoplus_{\sigma\in\hat K} m(\sigma)\Delta_\sigma.
\end{equation}
This shows that $\Delta_{\nu_p(\tau)}(\tau)$ commutes with $E_p(\tau)$. Let
$H_t^{\nu_p(\tau)}$ be the kernel of $e^{-t\widetilde\Delta_{\nu_p(\tau)}}$ and
let 
$H_t^{\tau,p}$ be the kernel of $e^{-t\widetilde\Delta_p(\tau)}$. Using
\eqref{bochhodge1} we get
\begin{equation}\label{heatfact}
H_t^{\tau,p}(g)=e^{-tE_p(\tau)}\circ H_t^{\nu_p(\tau)}(g),\quad g\in G.
\end{equation}
Let $c\in\R$ be such that $E_p(\tau)\ge c$. 
By Proposition \ref{Kato} it follows that
\begin{equation}
\| H_t^{\tau,p}(g)\|\le e^{-ct} H_t^0(g),\quad t\in\R^+,\,g\in G.
\end{equation}
Taking the trace in $\End(\Lambda^p\pL^*\otimes V_\tau)$ we get
\begin{equation}
\begin{split}
&\sum_{\gamma\in\Gamma-\{1\}}|h^{\tau,p}_t(g^{-1}\gamma g)| \\ &\le
\begin{pmatrix}d\\
p\end{pmatrix}\dim(\tau)
e^{-ct} 
\sum_{\gamma\in\Gamma-\{1\}} H_t^0(g^{-1}\gamma g),\quad t\in\R^+,\,g\in G.
\end{split}
\end{equation}
By Proposition \ref{esthyp} there exist $C_1,c_1>0$ such that 
\begin{equation}
\int_{\Gamma\bs G}\sum_{\gamma\in\Gamma-\{1\}}|h^{\tau,p}_t(g^{-1}\gamma
g)|\,d\dot g
\le C_1 e^{-c_1/t}
\end{equation}
for $0< t\le 1$. Thus by \eqref{trace19}
\[
h_t^{\tau,p}(1)=\frac{1}{\vol(X)}\Tr\left(e^{-t\Delta_p(\tau)}\right)+O(e^{
-c_1/t})
\]
for $0< t\le 1$. Using the asymptotic expansion of $\Tr\left(
e^{-t\Delta_p(\tau)}\right)$ (see \cite{Gi}), it follows that there is an
asymptotic
 expansion
\begin{equation}\label{asympexp3}
h_t^{\tau,p}(1)\sim \sum_{j=0}^\infty a_j t^{-d/2+j}
\end{equation}
as $t\to 0$. To study the behavior of $h_t^{\tau,p}(1)$ as $t\to\infty$, we use 
the Plancherel theorem, which can be applied since $h_t^{\tau,p}$ is a
$K$-finite Schwarz function. Let $\pi$ be an admissible unitary representation
of 
$G$ on a Hilbert space $\cH_\pi$. It follows from \eqref{equtrace2} and 
\eqref{equtrace1} that
\[
\Tr\pi(h_t^{\tau,p})=e^{t(\pi(\Omega)-\tau(\Omega))}\dim\left(\cH_\pi\otimes
\Lambda^p\pL^*\otimes V_\tau\right)^K. 
\]
Let $Q=MAN$ be a standard parabolic subgroup of $G$. Let $(\xi,W_\xi)$ be a
discrete series representation of $M$. Let $\left<\cdot,\right>$ denote the
inner product 
on the real vector space $\aL^*$ induced by the Killing form. Fix positive
restricted roots 
of $\aL$ and let $\rho_\aL$ denote the corresponding half-sum of these roots.
Define a constant
$c(\xi)$ by
\begin{align}\label{defcon}
c(\xi):=-\langle\rho_\aL,\rho_\aL\rangle+
\xi(\Omega_M).
\end{align}
Recall that for $\nu\in \aL^*$ one has
\begin{align}\label{InfchHS}
\pi_{\xi,\nu}(\Omega)=-\langle\nu,\nu\rangle+c(\xi).
\end{align}
Then by the Plancherel theorem,
\cite[Theorem 3]{HC} and
\eqref{InfchHS} we 
have
\begin{align*}
&h_t^{\tau,p}(1)\\ =&\sum_Q\sum_{\xi\in\hat M_d}e^{-t(\tau(\Omega)-c(\xi))}
\int_{\aL^*}e^{-t\|\nu\|^2}
\dim\left(\cH_{\xi,i\nu}\otimes\Lambda^p\pL^*\otimes V_\tau\right)^K
p_\xi(i\nu)\;d\nu.
\end{align*}
Here the outer sum is over all association classes of standard cuspidal
parabolic subgroups of $G$ and $p_\xi(i\nu)$, the Plancherel-density, is of
polynomial growth in $\nu$.
Let $K_M=K\cap M$. By Frobenius reciprocity we have 
\begin{align}\label{FrRec}
\dim\left(\cH_{\xi,\nu}\otimes\Lambda^p\pL^*\otimes V_\tau\right)^K=
\dim\left(W_\xi\otimes\Lambda^p\pL^*\otimes V_\tau\right)^{K_M}.
\end{align}
Thus we get
\begin{equation}\label{planch12}
\begin{split}
&h_t^{\tau,p}(1)\\ =&\sum_Q\sum_{\xi\in\hat M_d}
\dim\left(W_\xi\otimes\Lambda^p\pL^*\otimes V_\tau\right)^{K_M}
e^{-t(\tau(\Omega)-c(\xi))}\int_{\aL^*}e^{-t\|\nu\|^2}p_\xi(i\nu)\;d\nu.
\end{split}
\end{equation}
The exponents of the exponential factors in front of the integrals are
controlled by the following lemma.

\begin{lem}\label{exponents}
Let $(\tau,V_\tau)\in\Rep(G)$.
Assume that $\tau\not\cong\tau_\theta$. 
Let $P=MAN$ be a cuspidal parabolic subgroup of $G$. Let $\xi\in\hat{M}_d$ and
assume that $\dim\left(W_\xi\otimes\Lambda^p\pL^*\otimes V_\tau\right)^{K_M}\neq
0$. Then one has
\begin{align*}
\tau(\Omega)-c(\xi)>0.
\end{align*}
\end{lem}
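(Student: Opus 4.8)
The plan is to reduce the inequality $\tau(\Omega)-c(\xi)>0$ to a statement about infinitesimal characters and to exploit the hypothesis $\tau\not\cong\tau_\theta$ to rule out equality. First I would recall the standard identities for Casimir eigenvalues in terms of Harish-Chandra parameters. Let $\hL=\tL_M\oplus\aL$ be a fundamental Cartan subalgebra of $\gL$ adapted to $P=MAN$, where $\tL_M\subset\kL_M$ is a compact Cartan subalgebra of $\mL$. If $\Lambda(\tau)$ denotes the highest weight of $\tau$ and $\rho$ the half-sum of positive roots of $(\gL_\C,\hL_\C)$, then $\tau(\Omega)=\langle\Lambda(\tau)+\rho,\Lambda(\tau)+\rho\rangle-\langle\rho,\rho\rangle$. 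On the other hand, writing $\Lambda_\xi\in i\tL_M^*$ for the Harish-Chandra parameter of the discrete series representation $\xi$ of $M$, one has $\xi(\Omega_M)=\langle\Lambda_\xi,\Lambda_\xi\rangle-\langle\rho_M,\rho_M\rangle$, where $\rho_M$ is the half-sum of positive roots of $(\mL_\C,\tL_{M,\C})$; combining this with \eqref{defcon} and the decomposition $\rho=\rho_M+\rho_\aL$ (for a compatible choice of positive systems), $c(\xi)$ becomes $\langle\Lambda_\xi,\Lambda_\xi\rangle-\langle\rho,\rho\rangle$, viewing $\Lambda_\xi$ as an element of $\hL_\C^*$ that vanishes on $\aL$. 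Hence
\begin{equation*}
\tau(\Omega)-c(\xi)=\langle\Lambda(\tau)+\rho,\Lambda(\tau)+\rho\rangle-\langle\Lambda_\xi,\Lambda_\xi\rangle.
\end{equation*}

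Next I would bring in the nonvanishing hypothesis $\dim(W_\xi\otimes\Lambda^p\pL^*\otimes V_\tau)^{K_M}\neq 0$. The key point is that this forces a relation between $\Lambda_\xi$ and $\Lambda(\tau)$: by a version of the argument behind Proposition \ref{PropDS} (i.e.\ Wigner's lemma / the fact that $(\gL,K)$-cohomology, or here its $M$-analogue, vanishes unless infinitesimal characters match), the only way $W_\xi$ can contribute to such an invariant subspace is if, up to the Weyl group $W(\gL_\C,\hL_\C)$, one has $\Lambda_\xi=\Lambda(\tau)+\rho$ restricted appropriately — more precisely, $\Lambda_\xi+\rho_\aL$ (or its relevant shift) lies in the Weyl orbit of $\Lambda(\check\tau)+\rho$. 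In particular $\|\Lambda_\xi\|\le\|\Lambda(\tau)+\rho\|$ always, because $\Lambda_\xi$ is the $\tL_M$-component (an orthogonal projection off $\aL$) of a vector of norm $\|\Lambda(\tau)+\rho\|$, and orthogonal projection does not increase norm. This already gives $\tau(\Omega)-c(\xi)\ge 0$. To get strictness I would argue that equality $\|\Lambda_\xi\|=\|\Lambda(\tau)+\rho\|$ can only happen if the $\aL$-component of the relevant Weyl-conjugate of $\Lambda(\tau)+\rho$ is zero, i.e.\ if $\Lambda(\tau)+\rho$ is conjugate to an element of $i\tL_M^*$; and this in turn would force $\tau$ to have a $\theta$-stable infinitesimal character in a way equivalent to $\tau\cong\tau_\theta$, contradicting the hypothesis.

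The main obstacle — and the step requiring the most care — is exactly this last implication: showing that $\tau\not\cong\tau_\theta$ genuinely prevents $\Lambda(\tau)+\rho$ (or any of its Weyl conjugates compatible with the parabolic $P$) from being orthogonal to $\aL$. The idea is that $\theta$ acts on $\hL$ by fixing $\tL_M$ pointwise and acting as $-1$ on $\aL$ (for a fundamental Cartan subalgebra, $\aL$ is the $(-1)$-eigenspace of $\theta$ on a $\theta$-stable maximally split-within-fundamental Cartan), so the highest weight $\Lambda(\tau_\theta)$ is obtained from $\Lambda(\tau)$ by applying $-\theta$ and re-dominantizing; the condition $\tau\cong\tau_\theta$ is therefore equivalent to $\Lambda(\tau)+\rho$ being $W$-conjugate to a $\theta$-fixed vector, hence (after conjugation) to a vector in $i\tL_M^*\cap(\text{fixed part})$, i.e.\ with vanishing $\aL$-part. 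Thus $\tau\not\cong\tau_\theta$ exactly excludes the equality case. I would also need to check the compatibility of positive-system choices so that $\rho=\rho_M+\rho_\aL$ and the projections line up; this is routine but must be done honestly. Once these pieces are in place, $\tau(\Omega)-c(\xi)=\|\Lambda(\tau)+\rho\|^2-\|\Lambda_\xi\|^2>0$ follows immediately.
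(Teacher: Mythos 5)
There is a genuine gap in your argument at the step where you claim that the nonvanishing of $\dim\left(W_\xi\otimes\Lambda^p\pL^*\otimes V_\tau\right)^{K_M}$ forces a Weyl-orbit relation between $\Lambda_\xi$ and $\Lambda(\check\tau)+\rho$. That implication is false: the space of $K_M$-invariants is a purely representation-theoretic condition on the $K_M$-types of $W_\xi$ and of $\Lambda^p\pL^*\otimes V_\tau$, and it imposes no constraint on the infinitesimal character of $\xi$. For a fixed $\tau$ there will in general be many $\xi\in\hat M_d$ with quite different Harish-Chandra parameters for which the invariant space is nonzero (already in low-rank examples such as $G=\SL(2,\C)$ with $M=\mathrm{U}(1)$, the condition is merely that a certain $\mathrm{U}(1)$-weight occurs in $\Lambda^p\pL^*\otimes V_\tau$). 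The Wigner-lemma/infinitesimal-character argument you invoke ``behind Proposition~\ref{PropDS}'' is an argument about relative Lie algebra cohomology, where the Kuga lemma brings the Casimir into play; it does not apply to raw spaces of invariants. Consequently the inequality $\|\Lambda_\xi\|\le\|\Lambda(\tau)+\rho\|$ and the conclusion $\tau(\Omega)-c(\xi)\ge 0$ are not established, and the equality analysis that follows has no foundation.

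The paper runs the logic in the opposite direction. One assumes, for contradiction, that $\tau(\Omega)-c(\xi)\le 0$. Since $\pi_{\xi,\nu}(\Omega)=-\|\nu\|^2+c(\xi)$, one can then choose a \emph{real} $\nu_0\in\aL^*$ with $\pi_{\xi,\nu_0}(\Omega)=\tau(\Omega)$; such a $\pi_{\xi,\nu_0}$ is unitarily induced, hence unitary. It is this imposed Casimir matching that, via \cite[Proposition II.3.1]{BW}, identifies $H^p(\gL,K;\cH_{\xi,\nu_0,K}\otimes V_\tau)$ with $(\cH_{\xi,\nu_0}\otimes\Lambda^p\pL^*\otimes V_\tau)^K$, which by Frobenius reciprocity equals the nonzero $K_M$-invariant space in the hypothesis. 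But \cite[Proposition II.6.12]{BW} says that this $(\gL,K)$-cohomology vanishes for every unitary $\pi$ when $\tau\not\cong\tau_\theta$ --- the desired contradiction. So the role of the hypotheses is exactly inverted relative to your sketch: the Casimir relation is assumed in order to transfer the $K_M$-invariant information to cohomology, rather than being deduced from the invariant condition. Your intuition that the $\theta$-asymmetry of $\tau$ must block an ``equality case'' is correct in spirit, but without passing through the unitary induced representation and Borel--Wallach vanishing the argument does not close.
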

\begin{proof}
Assume that $\tau(\Omega)-c(\xi)\leq 0$. Then by \eqref{InfchHS} there exists a
$\nu_0\in\aL^*$
such that
\begin{align*}
\pi_{\xi,\nu_0}(\Omega)=\tau(\Omega).
\end{align*}
Together with \eqref{FrRec}, our assumption and \cite[Proposition II.3.1]{BW} it
follows that
\begin{align*}
\begin{split}
\dim\left(H^p(\gL,K;\cH_{\xi,\nu_0,K}\otimes V_\tau)\right)\neq 0,
\end{split}
\end{align*}
where $\cH_{\xi,\nu_0,K}$ are the $K$-finite vectors in $\cH_{\xi,\nu_0}$
Since $\tau\not\cong\tau_\theta$, this is a contradiction to the first 
statement of \cite[Proposition II. 6.12]{BW}. 
\end{proof}

Let $\tau$ be an irreducible representation of $G$ which satisfies 
$\tau\not\cong\tau_\theta$. It follows from \eqref{planch12} and
Lemma \ref{exponents} that there exists $c>0$ such that
\begin{equation}\label{toinft}
h_t^{\tau,p}(1)=O\left(e^{-ct}\right)
\end{equation}
as $t\to\infty$. Using \eqref{asympexp3} and \eqref{toinft} it follows from
standard methods, see for example \cite{Gi}, that the 
Mellin transform
\[
\int_0^\infty h_t^{\tau,p}(1)t^{s-1}\;dt
\]
converges absolutely and uniformly on compact subsets of the half-plane
$\Re(s)>d/2$ and admits a meromorphic extension to $\C$ which is holomorphic
at $s=0$ if $d=\dim(\tilde{X})$ is odd and has  at most a simple pole at $s=0$
for $d=\dim(\tilde{X})$ even. Thus we can define the $L^2$-torsion
$T_X^{(2)}(\tau)\in\R^+$ by  
\begin{equation}\label{l2tor1}
\begin{split}
&\log T_X^{(2)}(\tau)\\ &:=\frac{1}{2}\sum_{p=1}^d(-1)^p p\frac{d}{ds}\left(
\frac{1}{\Gamma(s)}\int_0^\infty
\Tr_\Gamma\left(e^{-t\widetilde\Delta_p(\tau)}\right)
t^{s-1}\,dt\right)\bigg|_{s=0},
\end{split}
\end{equation}
where the right hand side is defined near $s=0$ by analytic continuation. 
For $t>0$ let 
\begin{equation}\label{l2torker}
K^{(2)}(t,\tau):=\sum_{p=1}^d (-1)^pp h_t^{\tau,p}(1).
\end{equation}
Put 
\begin{equation}\label{l2tor3}
t^{(2)}_{\widetilde X}(\tau):=\frac{1}{2}\frac{d}{ds}\left(\frac{1}{\Gamma(s)}
\int_0^\infty K^{(2)}(t,\tau) t^{s-1}\,dt\right)\bigg|_{s=0}.
\end{equation}
Then  $t^{(2)}_{\widetilde X}(\tau)$ depends only on the symmetric space 
$\widetilde X$ and $\tau$, and we have
\begin{equation}\label{l2tor2}
\log T_X^{(2)}(\tau)=\vol(X)\cdot t^{(2)}_{\widetilde X}(\tau).
\end{equation}

Next we establish an auxiliary result concerning the twisted Euler
characteristic. We let $\tau\in\Rep(G)$ be arbitrary. Let
$\cH^p(X,E_\tau):=\ker\Delta_p(\tau)$ be the space of
$E_\tau$-valued harmonic $p$-forms. Let
\[
\chi(X,E_\tau):=\sum_{p=0}^d (-1)^p \dim\cH^p(X,E_\tau)
\]
be the twisted Euler characteristic. Furthermore, let $\tilde{X}_d$ denote 
the compact dual of $\tilde{X}$. The following proposition is a familar
consequence of the index theorem. For the convenience of the reader we include
an independent proof.
\begin{prop}\label{propeqEu}
If $\delta(\tilde{X})\neq 0$, we have $\chi(X,E_\tau)=0$. If 
$\delta(\tilde{X})=0$,  one has
\begin{align}\label{eqEu2}
\chi(X,E_\tau)=(-1)^n\vol(X)\frac{\chi(\tilde{X}_d)}{\vol(\tilde{X}_d)}
\dim(\tau),
\end{align}
where $n=\dim(\widetilde X)/2$. 
\end{prop}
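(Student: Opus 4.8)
The plan is to run the heat--equation / spectral--decomposition argument used for Proposition \ref{vanish1}, but with the plain alternating sum in place of $\sum_p(-1)^pp$. Put $l^\tau_t:=\sum_{p=0}^d(-1)^ph^{\tau,p}_t$, with $h^{\tau,p}_t$ as in \eqref{Defh}. By McKean--Singer, $\chi(X,E_\tau)=\sum_{p=0}^d(-1)^p\Tr(e^{-t\Delta_p(\tau)})$ for every $t>0$, and exactly as in \eqref{anator3}--\eqref{trace8} this equals $\Tr R_\Gamma(l^\tau_t)$. Inserting $R_\Gamma=\bigoplus_{\pi\in\hat G}m_\Gamma(\pi)\pi$ and arguing as for \eqref{equtrace3} gives
\[
\chi(X,E_\tau)=\sum_{\pi\in\hat G}m_\Gamma(\pi)\,\Theta_\pi(l^\tau_t),\qquad
\Theta_\pi(l^\tau_t)=e^{t(\pi(\Omega)-\tau(\Omega))}\sum_{p=0}^d(-1)^p\dim\bigl(\cH_\pi\otimes\Lambda^p\pL^*\otimes V_\tau\bigr)^K .
\]
By the Poincar\'e principle the inner sum is the Euler characteristic of the complex computing $H^*(\gL,K;\cH_{\pi,K}\otimes V_\tau)$; since that cohomology vanishes unless $\pi$ has infinitesimal character $\chi_{\check\tau}$, in which case $\pi(\Omega)=\tau(\Omega)$, the exponential factor disappears and $\Theta_\pi(l^\tau_t)=\chi(\gL,K;\cH_{\pi,K}\otimes V_\tau)$ is independent of $t$.

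The key computation is a $K_M$-module cancellation. Let $Q=MAN$ be a proper cuspidal parabolic subgroup of $G$, let $Y\in\aL$ be a unit vector, and set $\pL_Y:=Y^\perp\cap\pL$. Since $K_M$ centralizes $\aL$, we have $\pL^*\cong\pL_Y^*\oplus\R Y^*$ as $K_M$-modules, and hence in the representation ring of $K_M$
\[
\sum_{p=0}^d(-1)^p[\Lambda^p\pL^*]=\Bigl(\sum_{p}(-1)^p[\Lambda^p\pL_Y^*]\Bigr)\bigl([\R]-[\R Y^*]\bigr)=0 .
\]
Together with Frobenius reciprocity (as in \eqref{FrRec}) this forces $\Theta_{\xi,\nu}(l^\tau_t)=0$ for all $\xi\in\hat M_d$ and all $\nu$. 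If $\delta(\widetilde X)\neq0$, then $G$ has no discrete series, so every cuspidal parabolic of $G$ is proper; by \cite[section 2.2]{Del} the characters of the induced representations $\pi_{\xi,\nu}$ attached to cuspidal parabolics generate the Grothendieck group of admissible representations of $G$, so $\Theta_\pi(l^\tau_t)=0$ for all $\pi\in\hat G$, and therefore $\chi(X,E_\tau)=0$. This settles the case $\delta(\widetilde X)\neq0$.

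Assume now $\delta(\widetilde X)=0$, so $d=2n$ and $G$ has discrete series. The vanishing above still applies to every $\pi$ whose character is a $\Z$-linear combination of characters induced from \emph{proper} cuspidal parabolics; among the irreducible representations with the (regular) infinitesimal character $\chi_{\check\tau}$ these are precisely those not in $\hat G_d$, since no limit of discrete series has regular infinitesimal character. Hence only the $\xi\in\hat G_d$ with $\chi_\xi=\chi_{\check\tau}$ contribute, and for each of them Proposition \ref{PropDS} (via \eqref{eqcoh}) gives $\Theta_\xi(l^\tau_t)=\chi(\gL,K;\cH_{\xi,K}\otimes V_\tau)=(-1)^n$. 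Therefore
\[
\chi(X,E_\tau)=(-1)^n\sum_{\xi\in\hat G_d,\ \chi_\xi=\chi_{\check\tau}}m_\Gamma(\xi).
\]
To conclude I would insert the classical multiplicity formula $m_\Gamma(\xi)=\vol(\Gamma\bs G)\,d_\xi$ for a discrete series $\xi$ of formal degree $d_\xi$ (valid since $\Gamma$ is cocompact and torsion free), together with the identity $\sum_{\chi_\xi=\chi_{\check\tau}}d_\xi=\dim(\tau)\cdot\chi(\widetilde X_d)/\vol(\widetilde X_d)$. This last identity is where the only real work lies: one combines Harish-Chandra's formal degree formula with Weyl's dimension and integration formulas (the quantity $\prod_{\alpha>0}|\langle\Lambda,\alpha\rangle|$ is $W_G$-invariant, so all $|W_G|/|W_K|=\chi(\widetilde X_d)$ discrete series with infinitesimal character $\chi_{\check\tau}$ share a single formal degree), and one must carefully reconcile the Haar measure on $G$ underlying the formal degrees with the Riemannian volumes $\vol(\Gamma\bs G)$ and $\vol(\widetilde X_d)$. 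I would also take care, in the $\delta(\widetilde X)=0$ step, that the regularity of $\chi_{\check\tau}$ genuinely rules out the non-tempered cohomological representations $A_{\qL}(\lambda)$ with $\qL\neq\bL$ and that discrete series do not occur as subquotients of representations induced from proper parabolics; apart from these points the argument is a direct transcription of that of Proposition \ref{vanish1}.
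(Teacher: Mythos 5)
Your proposal follows the same general strategy (spectral decomposition, Frobenius reciprocity, the vanishing of $\sum_p(-1)^p[\Lambda^p\pL^*]$ as a $K_M$-module for proper parabolics) but diverges at a crucial point: you try to compute $\chi(X,E_\tau)=\Tr R_\Gamma(l^\tau_t)$ directly from the spectral decomposition of $R_\Gamma$ on $L^2(\Gamma\bs G)$, while the paper computes the $\Gamma$-trace $\vol(X)\cdot l^\tau_t(1)$ via the Plancherel formula for $G$ and then invokes Atiyah's $\Gamma$-index theorem to identify $\chi^{(2)}(X,E_\tau)$ with $\chi(X,E_\tau)$. That detour is not a cosmetic choice. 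Your route runs into two genuine problems.

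First, the claim that $\Theta_\pi(l^\tau_t)=0$ for every $\pi\notin\hat G_d$ with infinitesimal character $\chi_{\check\tau}$ is false. Even with regular infinitesimal character there are non-tempered cohomological representations (the Vogan--Zuckerman $A_\qL(\lambda)$ with $\qL$ not a Borel) with nonzero Euler characteristic: when $\delta(G)=0$ every $\theta$-stable Levi $L$ satisfies $\delta(L)=0$, so $\chi(\gL,K;A_\qL(\lambda)\otimes V_\tau)=\pm\chi(\widetilde{X}_{L,d})\neq 0$. The simplest example is $G=\SL(2,\R)$, $\tau$ trivial: the trivial representation $\pi_0$ has $H^0(\gL,K;\C)=H^2(\gL,K;\C)=\C$, so $\Theta_{\pi_0}(l^\tau_t)=2$. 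Correspondingly, $\Theta_{\pi_0}$ is \emph{not} a $\Z$-combination of characters induced from proper cuspidal parabolics alone; in the Grothendieck group $\Theta_{\pi_0}=\Theta_{\xi,0}-\Theta_{D^+}-\Theta_{D^-}$, so discrete-series characters appear with nonzero coefficients. The assertion that regularity rules these out is what fails.

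Second, the multiplicity formula $m_\Gamma(\xi)=\vol(\Gamma\bs G)\,d_\xi$ is not exact for fixed cocompact torsion-free $\Gamma$: it is the asymptotic statement of De George--Wallach, and holds exactly only for integrable discrete series via Langlands. For general discrete series there are contributions from the non-tempered representations $A_\qL(\lambda)$ sharing the same infinitesimal character — precisely the representations whose contribution to $\Tr R_\Gamma(l^\tau_t)$ you dropped in the first step. What actually comes out of $\Tr R_\Gamma(l^\tau_t)$ is $\sum_\pi m_\Gamma(\pi)\chi(\gL,K;\cH_{\pi,K}\otimes V_\tau)$ summed over \emph{all} $\pi$ with the given infinitesimal character, and converting that into the clean formula $(-1)^n\vol(X)\chi(\widetilde X_d)\dim(\tau)/\vol(\widetilde X_d)$ requires exactly the cancellation between the multiplicity corrections and the non-tempered contributions that the trace formula or the Atiyah index theorem organizes for you. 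The paper's route — compute the $\Gamma$-trace, observe that in the Plancherel integral the finitely many $\nu$ contributing to the continuous part have measure zero so only $\hat G_d$ survives, and then apply $\chi^{(2)}(X,E_\tau)=\chi(X,E_\tau)$ — sidesteps both difficulties cleanly.
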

\begin{proof}
Let  $\pi\in\hat{G}$. It follows from \eqref{equtrace2} and
\eqref{equtrace3} that
\[
\sum_{p=0}^d(-1)^p\Theta_{\pi}(h_t^{p,\tau})=e^{
t(\pi(\Omega)-\tau(\Omega))}
\sum_{p=0}^{d}(-1)^p\dim(\cH_\pi\otimes\Lambda^p\pL^*\otimes V_\tau)^K.
\]
Using \cite[II.3.1]{BW}  and the Poincar\'e principle as in the proof of 
Proposition \ref{vanish1}, we get
\begin{equation}\label{FTalt}
\sum_{p=0}^d(-1)^p\Theta_{\pi}(h_t^{p,\tau})=
\sum_{p=0}^{d}(-1)^p\dim H^p(\gL,K;\cH_{\pi,K}\otimes V_\tau).
\end{equation}
Now by \cite[Theorem I.5.3]{BW} it follows that 
if $H^p(\gL,K;\cH_{\pi,K}\otimes V_\tau)\neq
0$, then $\chi_\pi=\chi_{\check{\tau}}$, where $\check{\tau}$ is
the contragredient representation of $\tau$. By \cite[Corollary 10.37, Corollary
9.2]{Kn1} there are only finitely many representations
$\pi\in\hat{G}$ with a given infinitesimal character. Thus if $Q=MAN$ is a
fundamental
parabolic subgroup with $Q\neq G$ and if $\xi\in\hat{M}_d$, it follows that
there are only finitely many $\lambda\in\aL^*$ such that
\begin{align}\label{altsum}
\sum_{p=0}^d(-1)^p\Theta_{\xi,\lambda}(h_t^{p,\tau})\neq 0.
\end{align}
Hence by the Plancherel-Theorem, \cite[Theorem 3]{HC} and \eqref{FTalt} we get
\begin{align}\label{auxeq}
\sum_{p=0}^d(-1)^p h_t^{p,\tau}(1)=\sum_{p=0}^d(-1)^p\sum_{\pi\in\hat{G}_d}
d(\pi)\dim H^p(\gL,K;\cH_{\pi,K}\otimes V_\tau),
\end{align}
where $\hat{G}_d$ denotes the discrete series of $G$ and $d(\pi)$ denotes the
formal degree of $\pi$. The sum is finite. 
Let
\[
b_p^{(2)}(X,E_\tau):=\lim_{t\to\infty}\Tr_\Gamma\left(e^{-t\tilde\Delta_p(\tau)}
\right)
\]
be the $L^2$-Betti number. Using that  \eqref{auxeq} is independent of $t$ 
and \eqref{gammatr}, we get
\begin{align}\label{auxeq1}
\vol(X)\sum_{p=0}^d(-1)^ph_t^{p,\tau}(1)=\sum_{p=0}^d (-1)^p
b^{(2)}_p(X,E_\tau)=
\chi^{(2)}(X,E_\tau).
\end{align}
By the $\Gamma$-index theorem of Atiyah \cite{At} we have $\chi^{(2)}(X,E_\tau)=
\chi(X,E_\tau)$. Hence by \eqref{auxeq} and \eqref{auxeq1} we get
\begin{equation}
\chi(X,E_\tau)=\vol(X)\cdot\sum_{p=0}^d(-1)^p\sum_{\pi\in\hat{G}_d}
d(\pi)\dim H^p(\gL,K;\cH_{\pi,K}\otimes V_\tau).
\end{equation}
If $\delta(\tilde{X})\neq 0$ then $\hat{G}_d$ is empty and hence, this 
sum equals zero, which proves the first statement. Now assume that 
$\delta(\tilde{X})=0$. Then $\tilde{X}$ is 
even-dimensional.
Let $\dim(\tilde{X})=2n$. We keep the notation from section
\ref{secDS}. 
By \cite[Corollary 5.2]{Ol} for $\Lambda'=w(\Lambda(\check{\tau})+\rho_G)$,
$w\in W_G/W_K$ one has
\begin{align*}
d(\omega_{\Lambda'})=\frac{\dim(\tau)}{\vol(\tilde{X}_d)}
\end{align*}
and so together with Proposition \ref{PropDS} we get
\begin{equation}\label{eulerch}
\begin{split}
&\sum_{p=0}^d(-1)^p\sum_{\pi\in\hat{G}_d}d(\pi)\dim H^p(\gL,K;\cH_{\pi,K}\otimes
V_\tau)\\
=&(-1)^{n}\frac{1}{\vol(\tilde{X}_d)}\#(W_G/W_K)\dim(\tau).
\end{split}
\end{equation}
Finally, by \cite[page 175]{Bott} one has
\begin{align*}
\#(W_G/W_K)=\chi(\tilde{X}_d).
\end{align*}
Applying equation \eqref{eulerch}, the proof of the Proposition follows.
\end{proof}

\begin{bmrk}
We remark that if $X$ is Hermitian and $\tau$ is the trivial representation,
then 
equation \eqref{eqEu2} reduces to Hirzebruch's Proportionality principle.
\end{bmrk}

Now we assume that $\delta(\widetilde X)=1$ and that $\widetilde X$ is
odd-dimensional. By the classification of simple Lie
groups we have $\widetilde X=\widetilde X_0\times \widetilde X_1$, where
$\delta(\widetilde X_0)=0$ and $\widetilde X_1=\SL(3,\R)/\SO(3)$ or
$\widetilde X_1=\SO^0(p,q)/\SO(p)\times\SO(q)$, $p,q$ odd. 
Let $\widetilde X_0=
G_0/K_0$ and let $G_1=\SL(3,\R)$, $K_1=\SO(3)$ or 
$G_1=\SO^0(p,q)$, $K_1=\SO(p)\times\SO(q)$, $p,q$ odd.
 Let $G=G_0\times G_1$.
Let $\tau$ be a finite-dimensional irreducible representation of $G$ and 
assume that $\tau\not\cong\tau_\theta$. Then $\tau=\tau_0\otimes\tau_1$, where
$\tau_i$ is an irreducible representation of $G_i$, $i=0,1$, and 
$\tau_1\not\cong\tau_{1,\theta}$.

\begin{prop}\label{l2torprod}
Let $\delta(\widetilde X)=1$ and let $\widetilde{X}$ be odd-dimensional.
Let $\widetilde X=\widetilde X_0\times 
\widetilde X_1$, where $\widetilde X_1$ is an odd-dimensional irreducible
symmetric space 
with $\delta(\widetilde X_1)=1$.
Let $\tau$ be a finite-dimensional irreducible
representation of $G$ with $\tau\not\cong\tau_\theta$. Then
\[
t_{\widetilde X}^{(2)}(\tau)=(-1)^{\dim\widetilde X_0/2}\frac{\chi(\widetilde
X_{0,d})}
{\vol(\widetilde X_{0,d})}\dim\tau_0\cdot t_{\widetilde X_1}^{(2)}(\tau_1).
\]
\end{prop}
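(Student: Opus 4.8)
The plan is to exploit the product structure $\widetilde X=\widetilde X_0\times\widetilde X_1$, $G=G_0\times G_1$, $K=K_0\times K_1$, under which all the relevant heat kernels factor. First I would note that $\pL=\pL_0\oplus\pL_1$ as a $K_0\times K_1$-module, so that $\Lambda^p\pL^*=\bigoplus_{p_0+p_1=p}\Lambda^{p_0}\pL_0^*\otimes\Lambda^{p_1}\pL_1^*$ and correspondingly $\nu_p(\tau)=\bigoplus_{p_0+p_1=p}\nu_{p_0}(\tau_0)\otimes\nu_{p_1}(\tau_1)$ as representations of $K_0\times K_1$ (external tensor products). Since $\Omega=\Omega_0+\Omega_1$ and $\tau(\Omega)=\tau_0(\Omega_0)+\tau_1(\Omega_1)$, formula \eqref{kuga} identifies $e^{-t\widetilde\Delta_p(\tau)}$ with the orthogonal direct sum over $p_0+p_1=p$ of $e^{-t\widetilde\Delta_{p_0}(\tau_0)}\otimes e^{-t\widetilde\Delta_{p_1}(\tau_1)}$. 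Passing to the convolution kernels on $G=G_0\times G_1$ and taking matrix traces of the fibrewise endomorphisms at the identity, this gives
\[
h_t^{\tau,p}(1)=\sum_{p_0+p_1=p}h_t^{\tau_0,p_0}(1)\,h_t^{\tau_1,p_1}(1),
\]
where on the right $h_t^{\tau_i,p_i}$ is the function on $G_i$ attached to $\widetilde X_i$ and $\tau_i$ as in \eqref{Defh}, depending only on $\widetilde X_i$ and $\tau_i$.

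Next I would substitute this into the definition \eqref{l2torker} of $K^{(2)}(t,\tau)$ and split the combinatorial factor $p=p_0+p_1$. Setting $A_i(t):=\sum_{p_i}(-1)^{p_i}h_t^{\tau_i,p_i}(1)$ and observing that the term $p_0=p_1=0$ contributes nothing because of the factor $p$, one obtains
\[
K^{(2)}(t,\tau)=K^{(2)}(t,\tau_0)\,A_1(t)+A_0(t)\,K^{(2)}(t,\tau_1).
\]
The quantities $A_i(t)$ are then evaluated exactly as in the proof of Proposition \ref{propeqEu}: the argument there (Plancherel formula for $h_t^{\tau_i,p_i}(1)$, \cite[Theorem~3]{HC}, Proposition \ref{PropDS} and \eqref{FTalt}) is intrinsic to $G_i$ and uses no cocompact lattice, and it shows that $A_i(t)$ is independent of $t$ and equals $\sum_{\pi\in(\hat G_i)_d}d(\pi)\sum_{p_i}(-1)^{p_i}\dim H^{p_i}(\gL_i,K_i;\cH_{\pi,K}\otimes V_{\tau_i})$. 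Since $\delta(\widetilde X_1)=1\neq 0$, the discrete series of $G_1$ is empty, whence $A_1(t)=0$. Since $\delta(\widetilde X_0)=0$, the computation \eqref{eulerch} together with Olbrich's formula for the formal degree and the equality $\#(W_{G_0}/W_{K_0})=\chi(\widetilde X_{0,d})$ gives $A_0(t)=(-1)^{\dim\widetilde X_0/2}\,\chi(\widetilde X_{0,d})\,\vol(\widetilde X_{0,d})^{-1}\,\dim\tau_0$.

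Combining the two displays yields $K^{(2)}(t,\tau)=(-1)^{\dim\widetilde X_0/2}\,\frac{\chi(\widetilde X_{0,d})}{\vol(\widetilde X_{0,d})}\,\dim\tau_0\cdot K^{(2)}(t,\tau_1)$, and since this prefactor is a $t$-independent constant it pulls out of the Mellin transform in \eqref{l2tor3}; the asserted formula for $t^{(2)}_{\widetilde X}(\tau)$ follows at once. The absolute convergence of that Mellin transform and its holomorphy at $s=0$, needed to make sense of \eqref{l2tor3}, are inherited from the $\widetilde X_1$-factor via \eqref{asympexp3} and \eqref{toinft}, using $\tau_1\not\cong\tau_{1,\theta}$. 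The only point that needs real care is the isolation of $A_0$ and $A_1$: one must check that the twisted Gauss--Bonnet type computation of Proposition \ref{propeqEu} goes through verbatim at the level of functions on $G_i$ evaluated at the identity, so that $A_0$ is genuinely a constant and $A_1$ genuinely vanishes. Everything else is a routine consequence of the product decomposition, and I do not expect a substantive obstacle beyond this bookkeeping.
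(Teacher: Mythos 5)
Your proposal is correct and follows the paper's line of argument closely: the same factorization of $h_t^{\tau,p}(1)$ over $p_0+p_1=p$, the same Leibniz-type rearrangement of the weighted alternating sum, and the same ultimate evaluation of the even (unweighted) alternating sums via the Plancherel theorem, Proposition~\ref{PropDS}, Olbrich's formal-degree formula, and the Bott identity $\#(W_G/W_K)=\chi(\widetilde X_d)$. The one place your presentation genuinely diverges is in how it reaches $A_0$ and $A_1$: the paper chooses auxiliary cocompact torsion-free lattices $\Gamma_i\subset G_i$ (invoking Borel), identifies $\vol(X_i)\,A_i(t)$ with $\chi^{(2)}(X_i,E_{\tau_i})$, passes to $\chi(X_i,E_{\tau_i})$ via Atiyah's $\Gamma$-index theorem, and then evaluates that Euler characteristic by Proposition~\ref{propeqEu}; you instead observe, correctly, that the Plancherel-level identity \eqref{auxeq} is purely a statement about functions on $G_i$ and needs no lattice, and you evaluate it directly via \eqref{eulerch}. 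This is a mild but genuine streamlining --- it eliminates the detour through $\Gamma_i$ and the $\Gamma$-index theorem --- and it is sound, since \eqref{auxeq} is derived entirely from the Plancherel formula and the fact that only finitely many $\lambda\in\aL^*$ contribute nontrivially. Your final remark about convergence and holomorphy of the Mellin transform being inherited from the $\widetilde X_1$-factor is also correct and matches the role of $\tau_1\not\cong\tau_{1,\theta}$ in the paper's setup.
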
 
\begin{proof}
Let $\widetilde E\to \widetilde X$ be the homogeneous vector bundle associated 
to $\tau|_K$. Similarly, let $\widetilde E_i\to \widetilde X_i$ be the 
homogeneous vector bundle associated to $\tau_i|_{K_i}$, $i=0,1$. Then
$\widetilde E\cong \widetilde E_1\boxtimes \widetilde E_2$ and
\[
\Lambda^k(\widetilde X,\widetilde E)\cong\bigoplus_{p+q=k}\left(\Lambda^p(
\widetilde X_0,\widetilde E_0)\otimes \Lambda^q(\widetilde X_1,\widetilde E_1)
\right).
\]
With respect to this decomposition we have
\[
\widetilde \Delta_k(\tau)=\bigoplus_{p+q=k}\left(\widetilde \Delta_p(\tau_0)
\otimes\Id +\Id\otimes \widetilde \Delta_q(\tau_1)\right).
\]
Let $H_t^{\tau,k}$ and $H_t^{\tau_i,p}$, $i=0,1$, be the corresponding heat
kernels.
Then it follows that $H_t^{\tau,k}=\oplus_{p+q=k}H_t^{\tau_0,p}\otimes
H_t^{\tau_1,q}$.
Hence for $h_t^{\tau,k}=\tr H_t^{\tau,k}$ and $h_t^{\tau_i,p}=\tr
H_t^{\tau_i,p}$, 
$i=0,1$, we have 
\[
h_t^{\tau,k}=\sum_{p+q=k} h_t^{\tau_0,p}\cdot h_t^{\tau_1,q}.
\]
Using this equality, we get
\begin{equation}\label{dirprod}
\begin{split}
\sum_{k=0}^d(-1)^k k
\,h_t^{\tau,k}(1)&=\sum_{p=0}^{d_1}\sum_{q=0}^{d_2}(-1)^{p+q}(p+q)
\,h_t^{\tau_1,p}(1)\cdot h_t^{\tau_2,q}(1)\\
&=\sum_{p=0}^{d_1}(-1)^p\, h_t^{\tau_1,p}(1)\cdot \sum_{q=0}^{d_2}(-1)^q q 
\,h_t^{\tau_2,q}(1)\\
&+\sum_{q=0}^{d_2}(-1)^q\, h_t^{\tau_2,q}(1)\cdot \sum_{p=0}^{d_1}(-1)^p p
\,h_t^{\tau_1,p}(1).
\end{split}
\end{equation}
Let $\Gamma_i\subset G_i$, $i=0,1$, any cocompact, torsion free discrete
subgroup. The existence of the $\Gamma_i$ follows from our assumptions 
on the $G_i$ stated in the introduction and from results of Borel \cite{Borel}.
Put $X_i=\Gamma_i\bs\widetilde X_i$ and $E_i=\Gamma\bs \widetilde
E_i$.
By \eqref{auxeq1} and the remark following it we have
\begin{equation}\label{l2euler}
\sum_{p=0}^d (-1)^p h_t^{\tau_i,p}(1)=\frac{\chi(X_i)}{\vol(X_i)},\quad i=0,1.
\end{equation}
Taking the Mellin transform of \eqref{dirprod} and using \eqref{l2euler}
and Proposition \ref{propeqEu}, the proposition follows.
\end{proof}

\section{The asymptotics of the $L^2$-torsion for
$\delta(\widetilde X)=1$}\label{secdelta1}
\setcounter{equation}{0}

In this section we study the asymptotic behaviour of the $L^2$-torsion of an
odd-dimensional irreducible symmetric 
space $\widetilde X$ with $\delta(\widetilde X)=1$.
Then we can assume that $G=\SO^0(p,q)$, $p,q$ odd, and
$K=\SO(p)\times\SO(q)$, or $G=\SL_3(\R)$ and
$K=\SO(3)$. To compute the
$L^2$ torsion in these cases, we need some preparation.
Let $P=MAN$ be a fundamental parabolic subgroup of $G$, i.e. we have
$\dim(A)=1$. Let $M^0$ be the identity component of $M$ and let $\mL$ be its Lie
algebra.
Then 
in our case $\mL$ is always semisimple. Let $K_{M}:=
K\cap M$, let $K_M^0$
be 
the identity component of $K_{M}$ and let
$\kL_{\mL}:=\kL\cap\mL$ 
be its Lie algebra.  Let $\tL$ be a Cartan subalgebra of $\kL_{\mL}$. Then $\tL$
is also a Cartan subalgebra of $\mL$ and of $\kL$. Moreover 
$\hL:=\aL\oplus\tL$ is a Cartan subalgebra of $\gL$.

Let $\Delta(\gL_{\C},\hL_{\C})$, $\Delta(\mL_{\C},\tL_{\C})$,
$\Delta((\kL_{\mL})_{\C},\tL_{\C})$ be 
the corresponding roots. Then
there 
is a canonical inclusion 
$\Delta(\mL_{\C},\tL_{\C}
)\hookrightarrow\Delta(\gL_{\C},\hL_{\C})$.
Fix a positive restricted root $e_1\in\aL^*$ and fix positive roots
$\Delta^+(\mL_{\C},\tL_{\C})$.
In this way we obtain positive roots
$\Delta^+(\gL_{\C},\hL_{\C})$. Let
$\rho_{G}$ resp. $\rho_{M}$ be the
half sums of the
elements of $\Delta^+(\gL_{\C},\hL_{\C})$ and $\Delta^+(\mL_{\C},\tL_{\C})$,
respectively. By our choices we have 
$\rho_{G}|_{\mL}=\rho_{M}$.

Let 
\begin{align*}
T:=\{m\in K_M\colon \Ad(m)|_{\tL}=\Id\}.
\end{align*}
Then we have 
\begin{align*}
T=\{k\in K\colon\Ad(k)|_{\tL}=\Id\}.
\end{align*}
Thus $T$ is connected. Let 
$N_{K_M}$ and $N_{K^0_M}$  be the normalizers of $\tL$ in $K_M$ and $K_M^0$,
respectively. 
Let $W_{K_M}:=N_{K_M}/T$ and let $W_{\kL_{\mL}}=N_{K_M^0}/T$ be the Weyl group
of 
$\Delta((\kL_{\mL})_{\C},\tL_{\C})$. Moreover we let
$W_{\mL}$ be the Weyl group of $\Delta(\mL_\C,\tL_\C)$.
Finally we let 
\begin{align*}
W(A):=\{k\in K\colon \Ad(k)\aL=\aL\}/K_M. 
\end{align*}
The following lemma is certainly well-known and has already been used by 
Olbrich, \cite[page 15]{Ol}. However, for the sake of completeness, we include a
proof here.
\begin{lem}\label{LemWA}
One has
\begin{align*}
\frac{|W_{K_M}|}{|W_{\kL_{\mL}}|}\cdot|W(A)|=2.
\end{align*}
\end{lem}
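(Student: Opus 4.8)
The plan is to exploit the two towers of Weyl groups associated to the fundamental Cartan subalgebra $\hL=\aL\oplus\tL$: on the one hand the Weyl groups attached to $\tL$ inside $M$, $K_M$, and $K_M^0$, and on the other the Weyl group $W_G=W(\gL_\C,\hL_\C)$ together with $W(A)$, which measures how much of $W_G$ is realized by $K$-conjugations preserving $\aL$. First I would set up the exact relationship between $W(A)$ and the ``$\aL$-part'' of $W_G$: since $\dim\aL=1$, an element $w\in W_G$ either fixes $\aL$ pointwise or acts as $-1$ on $\aL$, and an element of $K$ normalizing $\aL$ either centralizes it or inverts it; the point is that the nontrivial element of $W(A)$ exists if and only if $-1$ on $\aL$ is induced by an element of $N_K(\aL)$. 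I would recall (as in Knapp, and as used by Olbrich) that $W_G$ fits into the picture via the decomposition $W_G = W(M,\tL)\text{-part} \times (\text{$\aL$-reflections})$ once one passes to the relevant subgroups, and that $N_K(\aL)/Z_K(\aL)$ has order $|W(A)|\cdot|W_{K_M}|/|W_{\kL_\mL}|$ divided by appropriate factors.

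The cleanest route, which I would pursue, is to compare two formulas for $|W_G|$ (or for the order of a suitable subquotient). Because $P=MAN$ is a fundamental (maximal) parabolic with $\dim\aL=1$, the longest element interacts with $\aL$ in a controlled way: write $w_0^G$ for the longest element of $W_G$ relative to $\Delta^+(\gL_\C,\hL_\C)$ and analyze whether $-w_0^G$ or $w_0^G$ acts as $\pm1$ on $\aL$. Concretely, I would use that $W_G$ acts simply transitively on Weyl chambers, count the chambers of $\hL_\C$ that restrict to the two chambers of $\aL^*$, and observe that the stabilizer in $W_G$ of the positive $\aL$-chamber is exactly $W_M$ (the Weyl group of $\Delta(\mL_\C,\tL_\C)$), because $\rho_G|_\mL=\rho_M$ by our choice of positive systems. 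This gives $|W_G| = 2|W_M|$ precisely when $-1$ on $\aL$ lies in $W_G$ (which it does here, since these $\widetilde X$ are among the classified cases; alternatively this is automatic because $\delta(\gL)=1$ forces a single noncompact imaginary-plus-real structure on $\hL$ making $\aL$ a ``$\theta$-anisotropic line'' on which the longest element acts by $-1$), and $|W_G|=|W_M|$ otherwise — but in our situation one checks directly it is the former.

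Next I would relate $|W_G|$, $|W_K|$-type quantities, $|W_{K_M}|$, $|W_{\kL_\mL}|$, and $|W(A)|$. The standard identity (for a maximal parabolic with $\dim\aL=1$) is
\[
|W(A)| = \frac{|N_K(\aL)/Z_K(\aL)|}{|W_{K_M}|/|W_{\kL_\mL}|}\cdot(\text{correction}),
\]
but more useful is the counting of $K$-conjugacy: $N_K(\aL)/Z_K(\aL)$ has order $1$ or $2$ according to whether $-1|_\aL$ is induced by $K$, and $Z_K(\aL)=K_M$ up to components, while $N_{K_M}(\tL)/T = W_{K_M}$ and $N_{K_M^0}(\tL)/T=W_{\kL_\mL}$. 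Assembling: $W(A)$ is nontrivial iff $-1|_\aL\in W_G$ and is realized within $K$; in that realization the extra element of $W_G$ not coming from $W_M$ must be matched, on the $\tL$-side, by an element of $W_{K_M}\setminus W_{\kL_\mL}$, forcing $[W_{K_M}:W_{\kL_\mL}]$ to equal $2/|W(A)|$. Hence $|W(A)|\cdot[W_{K_M}:W_{\kL_\mL}]=2$, which is the claim. The main obstacle I anticipate is the bookkeeping of connected components: $M$ and $K_M$ need not be connected, so $W_{K_M}$ genuinely differs from $W_{\kL_\mL}$, and one must be careful that $T$ is connected (which the excerpt has just established) so that $W_{K_M}=N_{K_M}(\tL)/T$ and $W_{\kL_\mL}=N_{K_M^0}(\tL)/T$ are honestly comparable subgroups of the same ambient group $N_K(\tL)/T$. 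I would handle this by working throughout inside $N_K(\tL)/T$, showing $N_K(\tL)/T$ is generated by $W_{K_M}$ together with one element inverting $\aL$ (when such exists), and that $|N_K(\tL)/T|=|W_G^{\tL\text{-rel}}|$ equals $2|W_{\kL_\mL}|$ exactly — after which the two displayed factors multiply to $2$ by direct count. A quick verification on the two cases $G=\SL(3,\R)$ and $G=\SO^0(p,q)$ ($p,q$ odd) then confirms the result, since there the groups are small enough to compute all four Weyl-group orders explicitly.
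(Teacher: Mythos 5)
Your plan contains a concrete error at what you treat as the central step, and the argument before and after that step does not repair it.

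\textbf{The claim $|W_G|=2|W_M|$ is false.} Here $W_G=W(\gL_\C,\hL_\C)$ and $W_M=W(\mL_\C,\tL_\C)$. For $G=\SL(3,\R)$ one has $|W_G|=6$ and $|W_M|=2$, so the ratio is $3$; for $G=\SO^0(p,q)$ with $p,q$ odd and $n=(p+q-2)/2$, one has $|W_G|=2^n(n+1)!$ and $|W_M|=2^{n-1}n!$, so the ratio is $2(n+1)$. (These ratios are exactly $|W^1|$, i.e.\ the number of Kostant representatives, as the paper's own Section~6 records.) The chamber-counting argument you propose cannot give $2$: $W_G$ does not preserve the subspace $\aL\subset\hL$, so there is no action of $W_G$ on the two half-lines of $\aL$ for which an orbit--stabilizer count with stabilizer $W_M$ would make sense. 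It \emph{is} true that $\{w\in W_G:\ w|_\aL=\Id\}=W_M$ (Chevalley), but the index of that stabilizer is $|W^1|$, not $2$.

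\textbf{The ``matching'' step is an assertion, not a deduction.} You write that ``the extra element of $W_G$ not coming from $W_M$ must be matched, on the $\tL$-side, by an element of $W_{K_M}\setminus W_{\kL_\mL}$, forcing $[W_{K_M}:W_{\kL_\mL}]=2/|W(A)|$.'' Even granting $-1|_\aL\in W_G$, nothing in the preceding discussion shows that it is realized inside $K$, nor that failure of such a realization forces $K_M$ to be disconnected in a way matching $W_G/W_M$; these are the two things the lemma is really about. Your subsequent claim that $|N_K(\tL)/T|=2|W_{\kL_\mL}|$ is likewise not justified and fails in examples (for $\SO^0(3,3)$ one has $N_K(\tL)/T\cong W_K=(\Z/2)^2$ while $W_{\kL_\mL}$ is trivial). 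So the step where the two factors are supposed to ``multiply to $2$ by direct count'' is not established.

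\textbf{What the paper actually does, by contrast.} The paper never invokes $|W_G|/|W_M|$. It first proves the clean identity $|W_{K_M}|/|W_{\kL_\mL}|=\#(K_M/K_M^0)=\#(M/M^0)$ by a representative-lifting argument inside $N_{K_M}(\tL)$ together with \cite[Prop.~7.19(b)]{Kn2}, so the left factor literally counts components of $M$. It then embeds representatives of $W(A)$ into $N_K(\aL_\pL)$ via \cite[Prop.~8.85]{Kn2}, reducing the right factor to a question about the full restricted Weyl group. Only \emph{then} does it do the case check, establishing that $W(A)$ is nontrivial precisely in the cases ($\SO^0(p,1)$) where $M$ is connected, and $M$ has exactly two components otherwise. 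Your plan ends with a case check as well, but it is not preceded by the reductions that would make the case check carry the weight; and the structural bridge you propose ($|W_G|=2|W_M|$) is simply wrong. You would need to replace it with the component-counting identity (or an equivalent) to have a valid proof.
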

\begin{proof}
By \cite[Proposition 7.19 (b)]{Kn2}, the order 
$\#\left(M/M^0\right)$ equals $\#\left(K_M/K_{M}^0\right)$.
Let $k\in K_M$. Then $\Ad(k)\tL$ is a maximal torus in $\kL_{\mL}$ and thus
there 
exists a $k^0\in K_M^0$ such that $\Ad(k)\tL=\Ad(k^0)\tL$. Hence every element
of 
$K_M/K_M^0$ has a representative in $N_{K_M}$ and thus there are canonical
isomorphisms
$K_M/K_M^0\cong N_{K_M}/N_{K_M^0}\cong W_{K_M}/W_{\kL_{\mL}}$. In other words
$|W_{K_M}|/|W_{\kL_{\mL}}|$ equals the number of components of $M$.
Let $\aL_{\pL}$ be a maximal abelian subspace of $\pL$ containing $\aL$, let
$\Delta_{\aL_{\pL}}$ 
be the  corresponding restricted roots and
let $W(\Delta_{\aL_{\pL}})$ be 
the corresponding Weyl-group. One has
$W(\Delta_{\aL_{\pL}})=N_K(\aL_{\pL})/Z_K(\aL_{\pL})$, where $N_K(\aL_{\pL})$ 
resp. $Z_K(\aL_{\pL})$ are the normalizer resp. centralizer of $\aL_\pL$ in $K$.
Moreover by 
\cite[Proposition 8.85]{Kn2} each element of $W(A)$ has a representative in
$N_K(\aL_{\pL})$, i.e 
can be extended to an element of $W(\Delta_{\aL_{\pL}})$ which fixes $\aL$. Now
a case-by-case study easily implies 
that $W(\Delta_{\aL_{\pL}})$ contains such an element which is non-trivial if
and only if $G=\SO^0(p,1)$. In this case $M$ is connected. In
all other cases,
$M$ has exactly two components.
This proves the Lemma. 
\end{proof}
Let $H_1\in\aL$ with $e_1(H_1)=1$. Then we normalize the Killing form $B$ by
$1/B(H_1,H_1)$. We let $\left\|\cdot\right\|$ be
the corresponding norm on the real vector-space $i\tL^*\oplus\aL^*$.
Let $\Omega$ be the Casimir element with 
respect to the normalized Killing form. Then for $\tau\in\Rep(G)$ with highest
weight 
$\Lambda(\tau)$ we have
\begin{align}\label{casimir1}
\tau(\Omega)=\left\|\Lambda(\tau)+\rho_G\right\|^2-\left\|\rho_G\right\|^2. 
\end{align}
The restriction of $\left<\cdot,\cdot\right>$ to $\mL$ is non-degenerate and
$\Ad$-invariant. Let $\Omega_{M}$ be the corresponding Casimir element.
For $\sigma\in\Rep(M^0)$ with highest weight 
$\Lambda(\sigma)\in i\tL^*$ we define 
\begin{align}\label{def-c}
c(\sigma):=\left\|\Lambda(\sigma)+\rho_M\right\|^2-\left\|\rho_G\right\|^2.
\end{align}
Then one has $c(\sigma)=\chi_\sigma(\Omega_M)-\left\|\rho_G|_{\aL}\right\|^2$
and 
thus one has 
\begin{align}\label{cdual}
c(\sigma)=c(\check{\sigma})
\end{align}
for every $\sigma\in\Rep(M^0)$.  
Let $W_{\gL}:=W(\gL_\C,\hL_\C)$ be the Weyl group of
$\Delta(\gL_\C,\hL_\C)$  
and for $w\in W_{\gL}$ let $\ell(w)$ be its length with respect to the simple
roots
defined by $\Delta^+(\gL_{\C},\hL_{\C})$.
Finally let
\begin{align*}
W^1:=\{w\in W_{\gL}\colon w^{-1}\alpha>0\quad \forall \alpha \in
\Delta^+(\mL_{\C},\tL_{\C})\}.
\end{align*}
The subspace $\nL$ is even-dimensional and we write  $\dim(\nL)=2n$. For
$k=0,\dots,2n$ let $H^k(\nL;V_\tau)$ be the Lie-algebra cohomology
of $\nL$ with 
coefficients in $V_\tau$. Then the $H^k(\nL;V_\tau)$ are $M^0A$-modules and
their decomposition into 
irreducible $M^0A$-components can be described by the following theorem of
Kostant. 
\begin{prop}\label{Prop Kostant}
In the sense of $M^0A$-modules one has
\begin{align*}
H^{k}(\mathfrak{n};V_{\tau})\cong\sum_{\substack{w\in W^1\\ 
\ell(w)=k}}V_{\tau(w)},
\end{align*}
where $V_{\tau(w)}$ is the $M^0A$ module with highest weight $w(\Lambda(\tau)
+\rho_{G})-\rho_{G}$.
\end{prop}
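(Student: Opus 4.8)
The plan is to deduce the proposition from Kostant's theorem on the Lie algebra cohomology of the nilradical of a parabolic subalgebra (see \cite{Kostant}; the version adapted to $(\gL,K)$-cohomology is in \cite{BW}). The only genuine work is to identify the data of the statement with the input of that theorem and to observe that its conclusion, formulated at the level of the Levi Lie algebra, descends to $M^0A$-modules.

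First I would record that $\qL_\C:=\mL_\C\oplus\aL_\C\oplus\nL_\C$ is a parabolic subalgebra of $\gL_\C$ with Levi factor $\mathfrak{l}_\C:=\mL_\C\oplus\aL_\C$ and nilradical $\nL_\C$. This is forced by the construction of $\Delta^+(\gL_\C,\hL_\C)$ out of the positive restricted root $e_1$ and of $\Delta^+(\mL_\C,\tL_\C)$, which was arranged precisely so that $\nL$ is the sum of the positive $\aL$-restricted root spaces. Since $\aL$ is central in $\mathfrak{l}_\C$, the roots of $\mathfrak{l}_\C$ relative to $\hL_\C=\aL_\C\oplus\tL_\C$ are exactly the images of $\Delta(\mL_\C,\tL_\C)$ under the canonical inclusion $\Delta(\mL_\C,\tL_\C)\hookrightarrow\Delta(\gL_\C,\hL_\C)$; hence $W(\mathfrak{l}_\C,\hL_\C)=W_{\mL}$, and the set $W^1$ of the statement is precisely the set of minimal-length representatives of the left cosets $W_{\mL}\bs W_{\gL}$ attached to this parabolic.

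Kostant's theorem then gives, as $\mathfrak{l}_\C$-modules, an isomorphism $H^k(\nL_\C;V_\tau)\cong\bigoplus_{w\in W^1,\,\ell(w)=k}F_w$, where $F_w$ is the irreducible $\mathfrak{l}_\C$-module whose highest weight for $\Delta^+(\mathfrak{l}_\C,\hL_\C)$ equals $w(\Lambda(\tau)+\rho_G)-\rho_G$; that this weight is $\mathfrak{l}_\C$-dominant is part of the assertion and is exactly the condition $w\in W^1$. Internally one computes $H^\bullet(\nL_\C;V_\tau)$ from the Chevalley--Eilenberg complex $\Lambda^\bullet\nL_\C^*\otimes V_\tau$ by Hodge theory: the Kostant Laplacian is scalar on each $\mathfrak{l}_\C$-isotypic summand, by a Casimir-difference scalar proportional to $\|\Lambda(\tau)+\rho_G\|^2-\|\sigma+\rho_G\|^2$ where $\sigma$ is the highest weight of the summand; this scalar is nonnegative, and it vanishes exactly when $\sigma+\rho_G$ lies in the $W_{\gL}$-orbit of $\Lambda(\tau)+\rho_G$, i.e. $\sigma=w(\Lambda(\tau)+\rho_G)-\rho_G$ for a unique $w$, necessarily in $W^1$; such a summand then occurs in cohomological degree $\ell(w)$ with multiplicity one.

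Finally, the descent to $M^0A$-modules is automatic: since $M^0A$ normalizes $\nL$, the complex $\Lambda^\bullet\nL_\C^*\otimes V_\tau$ is a complex of $M^0A$-modules with $M^0A$-equivariant differential (via the adjoint action on $\nL_\C$ and the restriction of $\tau$), so each $H^k(\nL_\C;V_\tau)$ is an $M^0A$-module and Kostant's $\mathfrak{l}_\C$-isomorphism is one of $M^0A$-modules; as $M^0$ and $A$ are connected, the $\mathfrak{l}_\C$-irreducible summand $F_w$ is irreducible as an $M^0A$-module, hence equals the module $V_{\tau(w)}$ of highest weight $w(\Lambda(\tau)+\rho_G)-\rho_G$ named in the statement. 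I do not expect a deep obstacle: the content is Kostant's theorem, which I am treating as known, and the residual work is the bookkeeping of the second paragraph — checking that the textbook "$W^1$" for this parabolic coincides with the set defined here, and that $\rho_G$ (as in \eqref{casimir1}) is the half-sum compatible with the chosen positive systems. If a self-contained argument were desired, reproducing Kostant's eigenvalue computation for the Laplacian on $\Lambda^\bullet\nL_\C^*\otimes V_\tau$ would be the one substantive ingredient.
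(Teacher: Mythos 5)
Your proposal is correct and takes the same route as the paper, which simply cites Kostant's theorem (in the form of \cite[Theorem 2.5.1.3]{Wr}) without further comment. What you add beyond the paper's one-line citation — identifying $\qL_\C=\mL_\C\oplus\aL_\C\oplus\nL_\C$ as the relevant parabolic, recognizing $W^1$ as the set of minimal-length coset representatives, sketching the Kostant-Laplacian Hodge argument, and observing that connectedness of $M^0A$ lets the $\mathfrak{l}_\C$-module statement lift to a statement about $M^0A$-modules — is exactly the bookkeeping the paper leaves implicit, and it is done correctly.
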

\begin{proof}
See for example \cite[Theorem 2.5.1.3]{Wr}.
\end{proof}

\begin{corollary}\label{Kostant}
As $M^0A$-modules we have
\begin{align*}
\bigoplus_{k=0}^{2n}(-1)^{k}\Lambda^{k}\nL^*
\otimes V_\tau=\bigoplus_{w\in W^1}(-1)^{l(w)}V_{\tau(w)}.
\end{align*}
\end{corollary}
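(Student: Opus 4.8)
The plan is to deduce Corollary \ref{Kostant} from Proposition \ref{Prop Kostant} by taking the alternating sum in the Grothendieck ring $R(M^0A)$ of finite-dimensional $M^0A$-modules. The essential input, apart from Kostant's theorem, is the standard Koszul-type identity relating the exterior algebra of $\nL^*$ to Lie algebra cohomology, together with the elementary identity $\sum_{k=0}^{2n}(-1)^k\dim\Lambda^k\nL^* = 0$ which would fail only if $\dim\nL$ were odd — but we are told $\dim\nL=2n$ is even, which is exactly why the statement is clean.

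First I would recall that the Lie algebra cohomology $H^k(\nL;V_\tau)$ is by definition the cohomology of the Chevalley--Eilenberg complex $(\Lambda^\bullet\nL^*\otimes V_\tau, d)$, whose differentials are $M^0A$-equivariant because $M^0A$ normalizes $\nL$ and hence acts on the whole complex by chain maps. The Euler--Poincar\'e principle in the Grothendieck ring $R(M^0A)$ then gives
\[
\sum_{k=0}^{2n}(-1)^k\,[\Lambda^k\nL^*\otimes V_\tau]
=\sum_{k=0}^{2n}(-1)^k\,[H^k(\nL;V_\tau)]
\]
as an identity of virtual $M^0A$-modules (the alternating sum of a bounded complex equals the alternating sum of its cohomology). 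This is the only nontrivial homological ingredient, and it is completely formal once one knows the differentials are equivariant.

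Next I would substitute Proposition \ref{Prop Kostant}: for each $k$, $[H^k(\nL;V_\tau)] = \sum_{w\in W^1,\ \ell(w)=k}[V_{\tau(w)}]$ in $R(M^0A)$, so summing over $k$ with signs $(-1)^k=(-1)^{\ell(w)}$ yields
\[
\sum_{k=0}^{2n}(-1)^k[H^k(\nL;V_\tau)]=\sum_{w\in W^1}(-1)^{\ell(w)}[V_{\tau(w)}],
\]
which is exactly the right-hand side of the Corollary. Combining the two displayed identities gives the claim. I should note the minor point that $W^1$ indexes a set of representatives and that $\ell(w)$ ranges over $\{0,\dots,2n\}$ as $w$ ranges over $W^1$ (since $|\nL|=2n$ bounds the length of the relevant coset representatives), so the indexing on both sides matches.

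I do not anticipate a real obstacle here: the only thing to be slightly careful about is that we are working in the Grothendieck ring of $M^0A$-modules rather than with honest modules, so the identity is an equality of virtual representations (equivalently, of formal characters), not of actual modules — the individual $\Lambda^k\nL^*\otimes V_\tau$ need not be semisimple, but that is irrelevant for the alternating sum. If one instead wanted the statement at the level of actual $M^0A$-modules one would need complete reducibility, which does hold for $M^0$ reductive acting algebraically, but the Grothendieck-ring formulation is all that is used later (e.g.\ in the trace computations via \eqref{equtrace3} and the Plancherel formula), so I would simply state it in $R(M^0A)$ as written.
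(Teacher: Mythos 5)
Your argument is exactly the paper's: apply the Euler--Poincar\'e principle to the $M^0A$-equivariant Chevalley--Eilenberg complex computing $H^\bullet(\nL;V_\tau)$, then substitute Kostant's theorem (Proposition \ref{Prop Kostant}) and identify $(-1)^k$ with $(-1)^{\ell(w)}$. One small correction to your preamble: the identity $\sum_{k=0}^{2n}(-1)^k\dim\Lambda^k\nL^*=(1-1)^{\dim\nL}=0$ holds for any $\dim\nL\ge 1$, odd or even, so the parity of $\dim\nL$ is not what makes this work---and in any case that identity plays no role in the argument you actually give.
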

\begin{proof}
This follows from Proposition \ref{Prop Kostant} and the Poincar\'e 
principle \cite[(7.2.3)]{Kostant}.
\end{proof}

For $w\in W^{1}$ let $\sigma_{\tau,w}\in\Rep(M^0)$ be the finite-dimensional
irreducible representation of $M^0$ with highest weight 
\begin{equation}\label{sigmatauw}
\Lambda(\sigma_{\tau,w}):=w(\Lambda(\tau)
+\rho_{G})|_{\mathfrak{t}}-\rho_{M},
\end{equation}
and let  $\lambda_{\tau,w}\in\R$ be such that 
\begin{equation}\label{lambdatauw}
w(\Lambda(\tau)+\rho_{G})|_{\mathfrak{a}}=\lambda_{\tau,w}e_{1}.
\end{equation}
Then we have the following corollary about the Casimir eigenvalue.
\begin{prop}\label{propCE}
For every $w\in W^1$ one has
\begin{align*}
\tau(\Omega)=\lambda_{\tau,w}^2+c(\sigma_{\tau,w}).
\end{align*}
\end{prop}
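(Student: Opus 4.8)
The plan is to express $\tau(\Omega)$ through the highest weight $\Lambda(\tau)$ using \eqref{casimir1}, rewrite the norm in terms of the Weyl-group element $w\in W^1$, and then split the resulting vector into its $\aL$-part and its $\tL$-part, matching the two pieces against the definitions \eqref{lambdatauw} of $\lambda_{\tau,w}$ and \eqref{def-c} of $c(\sigma_{\tau,w})$.

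First I would invoke \eqref{casimir1} to write $\tau(\Omega)=\|\Lambda(\tau)+\rho_G\|^2-\|\rho_G\|^2$. Since $w\in W_\gL$ is an isometry of $i\tL^*\oplus\aL^*$ for the chosen inner product, we have $\|\Lambda(\tau)+\rho_G\|^2=\|w(\Lambda(\tau)+\rho_G)\|^2$. Now decompose $w(\Lambda(\tau)+\rho_G)=w(\Lambda(\tau)+\rho_G)|_\aL+w(\Lambda(\tau)+\rho_G)|_\tL$ according to the orthogonal splitting $\hL^*_\C = \aL^*_\C\oplus\tL^*_\C$ (orthogonality of $\aL^*$ and $\tL^*$ under the normalized Killing form is built into the setup, since $\aL\perp\tL$ in $\gL$). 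This gives
\[
\tau(\Omega)=\bigl\|w(\Lambda(\tau)+\rho_G)|_\aL\bigr\|^2
+\bigl\|w(\Lambda(\tau)+\rho_G)|_\tL\bigr\|^2-\|\rho_G\|^2.
\]
By \eqref{lambdatauw} the first term is $\lambda_{\tau,w}^2$ (recall $H_1$ was normalized so that $\|e_1\|=1$, hence $\|\lambda_{\tau,w}e_1\|^2=\lambda_{\tau,w}^2$).

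It remains to identify the second term with $c(\sigma_{\tau,w})+\|\rho_G\|^2$, i.e. to show $\|w(\Lambda(\tau)+\rho_G)|_\tL\|^2=\|\Lambda(\sigma_{\tau,w})+\rho_M\|^2$. By the definition \eqref{sigmatauw}, $\Lambda(\sigma_{\tau,w})+\rho_M=w(\Lambda(\tau)+\rho_G)|_\tL$, using that $\rho_G|_\tL=\rho_G|_\mL=\rho_M$ by the compatible choice of positive systems. Substituting into \eqref{def-c} yields
\[
c(\sigma_{\tau,w})=\bigl\|w(\Lambda(\tau)+\rho_G)|_\tL\bigr\|^2-\|\rho_G\|^2,
\]
and combining the two displays gives $\tau(\Omega)=\lambda_{\tau,w}^2+c(\sigma_{\tau,w})$, as claimed.

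The only genuinely delicate point is the orthogonality of the $\aL^*$ and $\tL^*$ components, together with the bookkeeping that the restriction of $w(\Lambda(\tau)+\rho_G)$ to $\tL$ is exactly $\Lambda(\sigma_{\tau,w})+\rho_M$ — this is where the identity $\rho_G|_\mL=\rho_M$, established earlier from the compatible choice of positive roots $\Delta^+(\mL_\C,\tL_\C)\subset\Delta^+(\gL_\C,\hL_\C)$, is used; everything else is a direct substitution of definitions and the Weyl-invariance of the norm. So there is no real obstacle beyond keeping the two orthogonal components straight.
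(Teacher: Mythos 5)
Your proof is correct and follows exactly the same line as the paper: apply \eqref{casimir1}, use Weyl-invariance of the norm, split $w(\Lambda(\tau)+\rho_G)$ along the orthogonal decomposition $\hL^*=\aL^*\oplus\tL^*$, and identify the two pieces with $\lambda_{\tau,w}e_1$ and $\Lambda(\sigma_{\tau,w})+\rho_M$ via \eqref{lambdatauw}, \eqref{sigmatauw}, and $\rho_G|_{\tL}=\rho_M$. You have simply spelled out the orthogonality and the $\rho$-bookkeeping that the paper leaves implicit.
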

\begin{proof}
By \eqref{casimir1} we have
\begin{align*}
\tau(\Omega)=&\left\|\Lambda(\tau)+\rho_G\right\|^2-\left\|\rho_G\right\|^2
=\left\|w(\Lambda(\tau)+\rho_G)\right\|^2-\left\|\rho_G\right\|^2\\
=&\left\|\lambda_{\tau,w}e_1\right\|^2+\left\|\Lambda(\sigma_{\tau,w}
)+\rho_M\right\|^2
-\left\|\rho_G\right\|^2=\lambda_{\tau,w}^2+c(\sigma_{\tau,w}).
\end{align*}
\end{proof}
Let $k_t^\tau$ be defined by \eqref{alter}. Our next goal is to
compute the Fourier transform of $k_t^\tau$. Note 
that, since $T$ is connected, 
it follows from \cite[section 6.9, section 8.7.1]{Wa1} that for
every discrete series representation $\xi$ of $M$ over $W_\xi$ there exists  a 
discrete
series representation $\xi^0$ of $M^0$ over $W_{\xi^0}$ such 
that $\xi$ is induced from $\xi^0$. 
Moreover, since $M^0$ is semisimple, the discrete series of $M^0$ is
parametrized as in section \ref{secDS}. By \cite[section 8.7.1]{Wa1}, two 
discrete 
series representations $\xi_1^0$ and $\xi_2^0$ of $M^0$ with corresponding
parameters $\Lambda_{\xi_1^0}, \Lambda_{\xi_2^0}$ as in section \ref{secDS}
induce 
the same discrete series representation 
of $M$ if and only if $\Lambda_{\xi_1^0}$ and $\Lambda_{\xi_2^0}$ are
$W_{K_M}$-conjugate. For $\xi\in\hat{M}_d$ and $\lambda\in\C$ we let 
$\pi_{\xi,\lambda}:=\pi_{\xi,\lambda e_1}$,
$\Theta_{\xi,\lambda}:=\Theta_{\xi,\lambda e_1}$.
\begin{prop}\label{propthetaxi}
Let $\xi\in\hat{M}_d$ with infinitesimal character $\chi(\xi)$. Let
$\pL_{\mL}:=\pL\cap\mL$ and let
$v:=\frac{1}{2}\dim\pL_{\mL}$.
Then for $\lambda\in\C$ one has
\begin{align*}
\Theta_{\xi,\lambda}(k_t^\tau)=
(-1)^v\sum_{\substack{w\in W^1\\\chi(\xi)=\chi(\check{\sigma}_{\tau,w})}}
(-1)^{\ell(w)+1}e^{-t(\lambda^2+\lambda_{\tau,w}^2)}
\end{align*}
\end{prop}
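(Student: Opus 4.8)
The plan is to compute $\Theta_{\xi,\lambda}(k_t^\tau)$ by combining the general character formula \eqref{equtrace3}, or rather its parabolically-induced refinement, with Kostant's theorem for $\nL$-cohomology and the description of the discrete series of $M$ in terms of $M^0$. Starting point: by Frobenius reciprocity (as already used in the proof of Proposition \ref{equcharac}) one has
\[
\Theta_{\xi,\lambda}(k_t^\tau)=e^{t(\pi_{\xi,\lambda}(\Omega)-\tau(\Omega))}
\sum_{p=1}^d(-1)^p p\,
\dim\left(W_\xi\otimes\Lambda^p\pL^*\otimes V_\tau\right)^{K_M}.
\]
I would then use the identity $\dim A=1$ to split $\pL^*=\R Y^*\oplus\pL_Y^*$ and, exactly as in \eqref{equp}, collapse the alternating-with-multiplicity sum $\sum(-1)^p p\,\Lambda^p\pL^*$ to $\sum_{p=0}^d(-1)^{p+1}\Lambda^p\pL_Y^*$ in the Grothendieck ring of $K_M$. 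Since $\pL_Y^*$ is (as a $K_M$-module) identified with $\pL_{\mL}^*\oplus\nL^*$ (or rather $\nL\cong\bar\nL$ as $K_M$-module, with $\pL_Y=\pL_{\mL}\oplus(\nL\text{-part})$), the exterior algebra factorizes: $\Lambda^\bullet\pL_Y^*\cong\Lambda^\bullet\pL_{\mL}^*\otimes\Lambda^\bullet\nL^*$. The alternating sum over $\Lambda^\bullet\pL_{\mL}^*$ is where the sign $(-1)^v$ with $v=\tfrac12\dim\pL_{\mL}$ will come from, via the $\kL_{\mL}$-module isomorphism $\Lambda^j\pL_{\mL}^*\cong\Lambda^{2v-j}\pL_{\mL}^*$ and the fact that $\sum_j(-1)^j\Lambda^j\pL_{\mL}^*$ computes (up to sign) the Euler class; but actually the cleanest route is to pass to $(\gL,K)$- and $(\mL,K_M)$-cohomology.

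The core computation is then to rewrite the $K_M$-invariants via Lie algebra cohomology. Combining the factorization above with Corollary \ref{Kostant}, in the Grothendieck ring of $M^0A$ (hence after restriction to $K_M^0$) one has
\[
\bigoplus_{p=0}^d(-1)^p\Lambda^p\pL_Y^*\otimes V_\tau
\;\cong\;\Bigl(\bigoplus_{j}(-1)^j\Lambda^j\pL_{\mL}^*\Bigr)\otimes
\Bigl(\bigoplus_{w\in W^1}(-1)^{\ell(w)}V_{\sigma_{\tau,w}}\otimes\C_{\lambda_{\tau,w}e_1}\Bigr).
\]
Tensoring with $W_\xi$, taking $K_M$-invariants and using the identity $\dim(W\otimes \Lambda^\bullet\pL_{\mL}^*\otimes U)^{K_M^0}$-type Poincaré-principle computation, the contribution of each $w$ reduces — by the $(\mL,K_M^0)$-cohomology vanishing theorem (the discrete-series analogue of \eqref{eqcoh}, i.e.\ \cite[I.5.3, II.3.1]{BW} applied to $M^0$) — to a nonzero term precisely when $\chi(\xi)=\chi(\check\sigma_{\tau,w})$, contributing the factor $(-1)^v\cdot(-1)^{\ell(w)+1}$ (the extra $+1$ coming from the shift $p\mapsto p+1$ in \eqref{equp}, the $(-1)^v$ from the $\pL_{\mL}$-Euler characteristic, and the comparison of $K_M$ versus $K_M^0$ invariants being handled by the fact that $\xi$ is induced from $\xi^0$). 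Finally one checks, using Proposition \ref{propCE}, that $\pi_{\xi,\lambda}(\Omega)-\tau(\Omega)=-\langle\lambda e_1,\lambda e_1\rangle+c(\xi)-\tau(\Omega)=-\lambda^2-\lambda_{\tau,w}^2$ on each summand indexed by a $w$ with $\chi(\xi)=\chi(\check\sigma_{\tau,w})$, since then $c(\xi)=c(\sigma_{\tau,w})=c(\check\sigma_{\tau,w})$ by \eqref{cdual} and $\tau(\Omega)=\lambda_{\tau,w}^2+c(\sigma_{\tau,w})$; this produces the exponential $e^{-t(\lambda^2+\lambda_{\tau,w}^2)}$ and completes the formula.

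The main obstacle I anticipate is the bookkeeping around $M$ versus its identity component $M^0$ and the discrete series parametrization — i.e.\ carefully justifying that taking $K_M$-invariants (as opposed to $K_M^0$-invariants) of $W_\xi\otimes(\cdots)$, with $\xi=\Ind_{M^0}^M\xi^0$, yields exactly the sum over those $w\in W^1$ with $\chi(\xi)=\chi(\check\sigma_{\tau,w})$ without spurious multiplicities or Weyl-group-orbit overcounting. This is where Lemma \ref{LemWA}, the connectedness of $T$, and the results of \cite[\S 8.7.1]{Wa1} on which $\xi^0$ induce the same $\xi$ will be needed; the remaining ingredients (the Grothendieck-ring manipulation of exterior powers, Kostant's theorem, the Casimir computation via Proposition \ref{propCE}) are essentially formal once set up. A secondary subtlety is getting the sign $(-1)^v$ right, which amounts to identifying $\sum_j(-1)^j\dim(U\otimes\Lambda^j\pL_{\mL}^*)^{K_M^0}$ with $(-1)^v\dim U^{K_M^0}$-type statements coming from the fact that $\pL_{\mL}$ carries no nontrivial $\tL$-fixed vectors together with a Weyl-character/Poincaré-duality argument on the compact dual of $M^0/K_M^0$.
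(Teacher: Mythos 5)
Your proposal is correct and follows essentially the same route as the paper: Frobenius reciprocity to pass to $K_M$-invariants, the reduction \eqref{equp} and the factorization $\Lambda^\bullet\pL_Y^*\cong\Lambda^\bullet\pL_{\mL}^*\otimes\Lambda^\bullet\nL^*$, Kostant's theorem (Corollary \ref{Kostant}), the $(\mL,K_M^0)$-cohomology interpretation giving the $(-1)^v$ via Proposition \ref{PropDS}, and the Casimir bookkeeping through Proposition \ref{propCE} and \eqref{cdual}. The $M$-versus-$M^0$ issue you flag as the main obstacle is in fact harmless and is exactly the step the paper disposes of first: since $T$ is connected, $\xi|_{K_M}$ is induced from $\xi^0|_{K_M^0}$, so Frobenius reciprocity directly identifies $[\Lambda^p\pL^*\otimes W_\xi\otimes V_\tau]^{K_M}$ with $[\Lambda^p\pL^*\otimes W_{\xi^0}\otimes V_\tau]^{K_M^0}$, and the orbit-counting issues you worry about only enter later (in Proposition \ref{PropTor}, via the factor $|W_\mL|/|W_{K_M}|$), not here.
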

\begin{proof}
Let $\xi^0$, $\Lambda_{\xi^0}$ be as above. Then one has
\begin{align*}
\pi_{\xi,\lambda}
(\Omega)=-\lambda^2+\left\|\Lambda_\xi\right\|^2-\left\|\rho_G\right\|^2.
\end{align*}
Thus if $\sigma\in\Rep(M^0)$ is such 
that $\chi_\sigma=\chi_\xi$ one has
\begin{align}\label{infchtheta}
\pi_{\xi,\lambda}
(\Omega)=-\lambda^2+c(\sigma).
\end{align} 
Moreover
$\xi|_{K_M}$ is induced 
from $\xi^0|_{K^0_{M}} $ and so by Frobenius reciprocity one has 
\begin{align*}
\left[\Lambda^p\pL^*\otimes\cH_\xi\otimes
V_\tau\right]^{K}=\left[\Lambda^p\pL^*\otimes W_\xi\otimes
V_\tau\right]^{K_M}=\left[\Lambda^p\pL^*\otimes W_{\xi_0}\otimes
V_\tau\right]^{K^0_M}.
\end{align*}
Thus by \eqref{equtrace2} one
has
\begin{align*}
\Theta_{\xi,\lambda}(k_t^\tau)=e^{t(\pi_{\xi,\lambda}(\Omega)-\tau(\Omega))}
\sum_{p=0}
^d(-1)^pp\left[\Lambda^p\pL^*\otimes W_{\xi^0}\otimes V_\tau\right]^{K^0_M}.
\end{align*}
Let $\pL_Y$ be as in Proposition \ref{equcharac}. Since $\dim\aL=1$, it follows 
that as $K_M^0$ modules $\pL_Y\cong\pL_{\mL}\oplus\nL$. Using \eqref{equp}, it
follows that as $K_M^0$ modules we have
\begin{align*}
&\sum_{p=0}^d(-1)^pp\Lambda^p\pL^*=\sum_{p=0}^{d}(-1)^{p+1}\Lambda^p(\pL_{\mL}
^*\oplus\nL^*)\\ =&\sum_{k=0}^{2n}(-1)^{k+1}\left(\Lambda^{\mathrm{ev}}\pL_{\mL}
^*-\Lambda^{\mathrm{odd}}\pL_{\mL}
^*\right)\otimes\Lambda^k\nL^*.
\end{align*}
Thus together with Corollary \ref{Kostant} and the Poincar\'e principle one gets
\begin{align*}
&\sum_{p=0}
^d(-1)^pp\left[\Lambda^p\pL^*\otimes W_{\xi^0}\otimes V_\tau\right]^{K^0_M}\\
=
&\sum_{w\in W^1}(-1)^{\ell(w)+1}\left[\left(\Lambda^{\mathrm{ev}}\pL_{\mL}
^*-\Lambda^{\mathrm{odd}}\pL_{\mL}^*\right)\otimes W_{\xi^0}\otimes 
V_{\tau(w)}\right]^{K_{M^0}}\\
=&\sum_{w\in W^1}(-1)^{\ell(w)+1}\chi(\mL,K_{M^0};W_{\xi^0}\otimes V_{\tau(w)}),
\end{align*}
where $\chi(\mL,K_{M}^0;W_{\xi^0}\otimes V_{\tau(w)})$ denotes the
Euler-characteristic
of the $(\mL,K_{M}^0)$-cohomology with coefficients in the $M^0$-module 
$V_{\tau(w)}\otimes W_{\xi^0}$. Thus the proposition follows from Proposition
\ref{PropDS}, Proposition \ref{propCE}, equation \eqref{infchtheta} and equation
\eqref{cdual}. 
\end{proof}

Next we come to the Plancherel measures. For $\xi\in\hat{M}_d$ we let
$\xi^0\in\hat{M}^0_d$ 
be as above. Fix a regular $\Lambda_{\xi^0}\in i\tL^*$ corresponding to $\xi^0$
as in section \ref{secDS} and let $\Lambda_\xi:=\Lambda_{\xi^0}$.
Choose positive roots $\Delta^+(\mL_\C,\tL_\C;\Lambda_{\xi})$ such that
$\Lambda_\xi$ is 
dominant with respect to these roots. Let $\Delta^+(\gL_\C,\hL_\C;\Lambda_\xi)$
be positive
roots defined via 
$\Delta^+(\mL_\C,\tL_\C;\Lambda_\xi)$ and $e_1$ and let $\rho_{G,\Lambda_\xi}$
be the
half-sum of the
elements 
of $\Delta^+(\mL_\C,\tL_\C;\Lambda_\xi)$. For $\lambda\in\R$ we let
$\mu_\xi(\lambda)$ 
be the Plancherel measure of $\pi_{\xi,\lambda}$.
Then there exists a polynomial
$P_\xi(z)$ 
such that one has
\begin{align}\label{muP}
\mu_{\xi}(\lambda)=P_\xi(i\lambda).
\end{align}
The polynomial $P_\xi(z)$ is given as follows. There exists a constant
$c_{\tilde{X}}$
which depends only on $\tilde{X}$ such that one has
\begin{align}\label{PolyI}
P_\xi(z)=(-1)^nc_{\tilde{X}}\prod_{\alpha\in
\Delta^+(\gL_\C,\hL_\C;\Lambda_\xi)}\frac{\left<\alpha,\Lambda_\xi+z
e_1\right>}{\left<\alpha,\rho_{G,\Lambda_\xi}\right>},
\end{align}
\cite[Theorem 13.11]{Kn1}, \cite[Theorem 13.5.1]{Wa2}.
By \cite[Lemma 5.1]{Ol} and our normalizations one has 
\begin{align}\label{Konstante0}
c_{\tilde{X}}=\frac{1}{|W(A)|\vol(\tilde{X}_d)}.
\end{align}
Note that $P_\xi(z)$ is an even polynomial in $z$. Now let
$w\in W_{\mL}$. We regard
$W_{\mL}$ as a subgroup of $W_{\gL}$. Then if we replace $\Lambda_\xi$ by
$w\Lambda_\xi$,
we have to replace 
$\Delta^+(\gL_\C,\tL_\C;\Lambda_\xi)$ by $w\Delta^+(\gL_\C,\tL_\C;\Lambda_\xi)$.
This implies 
that $P_\xi(z)$ depends only on the $W_{\mL}$-orbit of $\Lambda_\xi$ or
equivalently on the 
infinitesimal character  $\chi(\xi)$ of $\xi$. Thus if for
$\sigma\in\Rep(M^0)$
with 
highest weight $\Lambda(\sigma)$ we let 
\begin{align}\label{Ppoly}
P_\sigma(z):=(-1)^nc_{\tilde{X}}\prod_{\alpha\in
\Delta^+(\gL_\C,\hL_\C)}\frac{\left<\alpha,
\Lambda(\sigma)+\rho_M+z
e_1\right>}{\left<\alpha,\rho_G\right>},
\end{align}
where $c_{\tilde{X}}$ is as in \eqref{PolyI}, it follows that 
$P_\xi(\lambda)= P_\sigma(\lambda)$ if $\chi(\sigma)=\chi(\xi)$. Putting
everything together, we obtain the following corollary. 
\begin{prop}\label{PropTor}
Let $\tau\in\Rep(G)$ and assume that $\tau\not\cong\tau_\theta$. 
Then one has
\begin{align*}
\log{T_X^{(2)}(\tau)}=(-1)^{v}\pi\vol(X)\frac{|W_{\mL}|}{|W_{K_M}|}\sum_{w\in
W^1}(-1)^{\ell(w)}\int_{0}^{|\lambda_{\tau,w}|}P_{\check{\sigma}_{\tau,w}}(t)dt.
\end{align*}

\end{prop}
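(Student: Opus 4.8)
The plan is to combine the spectral-side expansion of the $L^2$-torsion kernel with the Fourier transform computation from Proposition \ref{propthetaxi} and the explicit Plancherel polynomials. First I would start from the definition \eqref{l2tor3}, \eqref{l2tor2}, so that $\log T_X^{(2)}(\tau) = \vol(X)\cdot t^{(2)}_{\widetilde X}(\tau)$ with $t^{(2)}_{\widetilde X}(\tau)$ the Mellin-regularized integral of $K^{(2)}(t,\tau) = \sum_p (-1)^p p\, h_t^{\tau,p}(1)$. The point is that $K^{(2)}(t,\tau)$ equals the identity contribution to the spectral side of the trace formula; by the Plancherel theorem (as in \eqref{planch12}, but now with $\delta(\widetilde X)=1$ so that the only cuspidal parabolic contributing is the fundamental one $P=MAN$ with $\dim\aL=1$), one gets
\[
K^{(2)}(t,\tau) = \sum_{\xi\in\hat M_d} \int_{\R} \Theta_{\xi,\lambda}(k_t^\tau)\,\mu_\xi(\lambda)\,d\lambda .
\]
Substituting the formula from Proposition \ref{propthetaxi} for $\Theta_{\xi,\lambda}(k_t^\tau)$ and collecting the $\xi$ with $\chi(\xi)=\chi(\check\sigma_{\tau,w})$ (using that there are exactly $|W_{K_M}|/|W_{\kL_\mL}|$ such $\xi$, since discrete series of $M^0$ in a given $W_{K_M}$-orbit induce the same discrete series of $M$, together with Proposition \ref{PropDS} to count the $M^0$-discrete series with a fixed infinitesimal character as $|W_\mL|/|W_{\kL_\mL}|$), and replacing $\mu_\xi$ by $P_{\check\sigma_{\tau,w}}$ via \eqref{Ppoly} and \eqref{cdual}, I would obtain
\[
K^{(2)}(t,\tau) = (-1)^v \frac{|W_\mL|}{|W_{K_M}|}\cdot\frac{|W_{K_M}|}{|W_{\kL_\mL}|}\cdot\frac{|W_{\kL_\mL}|}{|W_\mL|}\cdots
\]
— more precisely, a sum over $w\in W^1$ of $(-1)^{\ell(w)+1}$ times $e^{-t\lambda_{\tau,w}^2}$ times $\int_\R e^{-t\lambda^2}P_{\check\sigma_{\tau,w}}(\lambda)\,d\lambda$, with an overall combinatorial constant that I would bookkeep carefully; Lemma \ref{LemWA} enters here to reconcile the various Weyl-group indices.

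The next step is to apply the Mellin transform $t\mapsto\frac{1}{\Gamma(s)}\int_0^\infty t^{s-1}(\cdot)\,dt$ term by term and differentiate at $s=0$. For a fixed even polynomial $P_{\check\sigma_{\tau,w}}(\lambda) = \sum_j a_j \lambda^{2j}$ and a fixed $\lambda_{\tau,w}=:\mu$, one has
\[
\int_0^\infty t^{s-1} e^{-t\mu^2}\!\!\int_\R e^{-t\lambda^2}\lambda^{2j}\,d\lambda\,dt
= \Gamma\!\left(s+j+\tfrac12\right)\Gamma(1)\cdot(\text{const})\cdot\!\int \cdots,
\]
so the key elementary computation is the classical identity that the finite-part / derivative-at-zero of $\frac{1}{\Gamma(s)}\int_0^\infty t^{s-1}e^{-t(\mu^2+\lambda^2)}\,dt$ produces, after integrating $\lambda$ against $\lambda^{2j}$ over $\R$ and differentiating in $s$ at $0$, a term proportional to $\int_0^{|\mu|} \lambda^{2j}\,d\lambda$. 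Concretely, the mechanism is: $\frac{d}{ds}\big|_{s=0}\frac{1}{\Gamma(s)}\int_0^\infty t^{s-1}e^{-tb}\,dt = -\log b$ (this is the scalar zeta-regularization), and $-\frac{1}{2}\int_\R \log(\mu^2+\lambda^2)\,Q(\lambda)\,d\lambda$ is evaluated by differentiating under the integral in $\mu$: $\frac{\partial}{\partial\mu}$ of that expression is $-\int_\R \frac{\mu}{\mu^2+\lambda^2}Q(\lambda)\,d\lambda = -\pi Q(\mu)$ for $\mu>0$ when $Q$ is an even polynomial (by a contour / residue computation, the half-line being the source of the factor $\pi$ rather than $2\pi i$), whence the antiderivative $-\pi\int_0^{|\mu|}Q(t)\,dt$. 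This is where the factor $\pi$ in the statement comes from, and it is the one genuinely analytic point; everything else is bookkeeping.

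Finally I would assemble: plugging $Q = P_{\check\sigma_{\tau,w}}$ and $\mu=\lambda_{\tau,w}$, summing over $w\in W^1$ with the signs $(-1)^{\ell(w)}$ (absorbing the $+1$ in $(-1)^{\ell(w)+1}$ into the overall sign, and noting the factor $1/2$ in \eqref{l2tor3} cancels against the $2$ from the two half-lines $\pm|\mu|$, or is handled by the evenness of $P$), multiplying by $\vol(X)$ from \eqref{l2tor2} and by $(-1)^v$ and the Weyl-group ratio $|W_\mL|/|W_{K_M}|$ emerging from the multiplicity count of discrete series, yields exactly
\[
\log T_X^{(2)}(\tau) = (-1)^v\pi\vol(X)\frac{|W_\mL|}{|W_{K_M}|}\sum_{w\in W^1}(-1)^{\ell(w)}\int_0^{|\lambda_{\tau,w}|}P_{\check\sigma_{\tau,w}}(t)\,dt .
\]
The main obstacle I anticipate is not any single hard estimate but rather the careful tracking of constants: the normalization of the Killing form and of Plancherel measure, the precise multiplicity with which each $\xi\in\hat M_d$ occurs (reconciling $\hat M_d$ with $\hat{M^0}_d$ via induction, and using Proposition \ref{PropDS} together with Lemma \ref{LemWA}), the convergence/interchange of the $\xi$-sum, the $\lambda$-integral and the Mellin integral (justified because $h_t^{\tau,p}$ is a $K$-finite Schwartz function and, by Lemma \ref{exponents} together with $\tau\not\cong\tau_\theta$, the exponents $\tau(\Omega)-c(\xi)=\lambda_{\tau,w}^2>0$ are strictly positive so each term decays exponentially as $t\to\infty$), and the sign $(-1)^v$ versus $(-1)^n$ coming from the $(\mL,K_{M^0})$-cohomology degree. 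Once those are pinned down, the result drops out.
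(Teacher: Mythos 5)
Your proposal follows essentially the same route as the paper: start from the Plancherel expansion of $k_t^\tau(1)$, substitute Proposition~\ref{propthetaxi}, collect the $\xi\in\hat M_d$ with a common infinitesimal character, and evaluate the resulting Mellin transform at $s=0$ term by term. The paper gets the last step by citing Lemma~2 and Lemma~3 of Fried \cite{Fried}; you reconstruct it directly via the $-\log b$ identity and differentiation in $\mu$, which is exactly what those lemmas encode. Two bookkeeping points you flagged do need repair. First, your stated count ``there are exactly $|W_{K_M}|/|W_{\kL_\mL}|$ such $\xi$'' is not the number of $\hat M_d$-elements with a fixed infinitesimal character; $|W_{K_M}|/|W_{\kL_\mL}|$ is the number of components of $M$, i.e.\ the size of a fiber under induction $\hat{M^0_d}\to\hat M_d$. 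The correct count of $\xi\in\hat M_d$ with fixed infinitesimal character is $(|W_\mL|/|W_{\kL_\mL}|)\big/(|W_{K_M}|/|W_{\kL_\mL}|)=|W_\mL|/|W_{K_M}|$, which is precisely the factor in the statement; Lemma~\ref{LemWA} then relates this quantity to $|W(A)|$ and hence to the normalization $c_{\widetilde X}$. Second, in the residue step the regularized identity is $\int_\R\frac{\mu\,Q(\lambda)}{\mu^2+\lambda^2}\,d\lambda=\pi\,Q(i\mu)$, not $\pi\,Q(\mu)$: with $Q(\lambda)=\mu_\xi(\lambda)=P_\xi(i\lambda)$ this becomes $\pi P_\xi(i\cdot i\mu)=\pi P_\xi(\mu)$ by evenness of $P_\xi$, which is why the final integrand is $P_{\check\sigma_{\tau,w}}(t)$ at real argument. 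Once those two points are pinned down your derivation matches the paper's.
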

\begin{proof}
For a given regular and integral $\Lambda\in i\tL^*$ there are exactly
$|W_{\mL}|/|W_{K_M}|$ distinct elements of $\hat{M}_d$ 
with infinitesimal character $\chi_\Lambda$. 
Thus if one combines the Plancherel-Theorem with
Proposition \ref{equcharac},
Proposition \ref{propthetaxi}, 
equation \eqref{muP} and
the previous remarks one obtains 
\begin{align*}
k_t^\tau(1)=(-1)^v\frac{|W_{\mL}|}{|W_{K_M}|}\sum_{w\in
W^1}(-1)^{\ell(w)+1}e^{-t\lambda_{\tau,w}^2}\int_{\R}e^{
-t\lambda^2}P_{\check{\sigma}_{\tau,w}}(i\lambda)d\lambda.
\end{align*}
We let
\begin{align*}
I(t,\tau):=\vol(X)k_t^{\tau}(1).
\end{align*}
By the computations below one has $|\lambda_{\tau,w}|>0$ for every $w\in W^1$.
Thus, since is $P_{\sigma}(\lambda)$
is an even polynomial of degree $2n$ for 
each $\sigma\in\hat{M^0}$, for $s\in\C$ with  $\Real(s)>2n+1$ the integral  
\begin{align*}
\mathcal{M}I(s,\tau):=\int_{0}^\infty t^{s-1}I(t,\tau)dt
\end{align*}
exists. Moreover, by \cite{Fried}, Lemma 2 and Lemma 3, $\mathcal{M}I(s,\tau)$
has a meromorphic continuation to $\C$ which is regular at $0$ and 
if $\mathcal{M}I(\tau)$ denotes its value at $0$ one has
\begin{align*}
\mathcal{M}I(\tau)
=2\pi\vol(X)(-1)^{v}\frac{|W_{\mL}|}{|W_{K_M}|}\sum_{w\in
W^1}(-1)^{\ell(w)}\int_0^{|\lambda_{\tau,w}|}
P_{\check{\sigma}_{\tau,w}}(\lambda)\,d\lambda.
\end{align*}
By definition one has
\begin{align*}
\log{T_X^{(2)}}(\tau)=\frac{1}{2}\mathcal{M}I(\tau)
\end{align*}
and the proposition follows. 
\end{proof}
Now let $G=\SO^0(p,q)$, $p>1$, $p,q$ odd, $p\geq q$, $p=2p_1+1$, $q=2q_1+1$. Let
$n:=p_1+q_1$. 
Let $K=\SO(p)\times\SO(q)$ and $\widetilde
X=G/K$. 
Then $\dim(\widetilde X)=2n+1$. The normalized Killing form is given by
\begin{align*}
\left<X,Y\right>:=\frac{1}{2n-2}B(X,Y).
\end{align*}
We equip $\tilde{X}$ with the Riemannian metric defined by the restriction
of $\left<\cdot,\cdot\right>$ to $\pL$. We have
$\mL\cong\mathfrak{so}(p-1,q-1)$. 
We realize the fundamental Cartan subalgebra as follows.
Let 
\begin{align}
H_1:=E_{p,p+1}+E_{p+1,p}.
\end{align}
Then we put
\begin{align*}
\aL=\R H_1.
\end{align*}
Moreover we let
\begin{equation}\label{basis1}
H_i:=\begin{cases} \sqrt{-1}(E_{2i-3,2i-2}-E_{2i-2,2i-3}),&2\leq i\leq p_1+1\\
\sqrt{-1}(E_{2i-1,2i}-E_{2i,2i-1})&p_1+1< i\leq n+1.\end{cases}
\end{equation}
Then 
\begin{align*}
\tL:=\bigoplus_{i=2}^{n+1}\sqrt{-1}H_i
\end{align*}
is a Cartan subalgebra of $\mL$ and 
\begin{align*}
\hL:=\aL\oplus\tL
\end{align*}
is a Cartan subalgebra of $\gL$. Define 
$e_{i}\in\mathfrak{h}_{\mathbb{C}}^{*}$, $i=1,\dots,n+1$,  by
\begin{align*}
e_{i}(H_{j})=\delta_{i,j},\: 1\leq i,j\leq n+1.
\end{align*}
Then the sets of roots of $(\gL_\C,\hL_\C)$ and $(\mL_\C,
\tL_\C)$ are given by
\begin{align*}
&\Delta(\mathfrak{g}_{\C},\mathfrak{h}_{\mathbb{C}})=\{\pm e_{i}\pm e_{j},\: 1
\leq i<j\leq n+1\}\\
&\Delta(\mathfrak{m}_{\C},\tL_{\C})=\{\pm e_{i}\pm e_{j},\: 2
\leq i<j\leq n+1\}.
\end{align*}
We fix positive systems of roots by
\begin{align*}
&\Delta^{+}(\mathfrak{g}_{\mathbb{C}},\mathfrak{h}_{\mathbb{C}})
:=\{e_{i}+e_{j},\:i\neq j\}\sqcup\{e_{i}-e_{j},\:i<j\}\\ 
& \Delta^{+}(\mathfrak{m}_{\mathbb{C}},\tL_{\mathbb{C}})
:=\{e_{i}+e_{j},\:i\neq j,\:i,j\geq 2\}\sqcup\{e_{i}-e_{j},\:2\leq i<j\}.
\end{align*}
We parametrize the finite-dimensional irreducible representations $\tau$ of $G$
by their highest weights
\begin{equation}\label{Darstellungen von G}
\begin{split}
&\Lambda(\tau)=k_{1}(\tau)e_{1}+\dots+k_{n+1}(\tau)e_{n+1},\:
(k_1(\tau),\dots,k_{n+1}(\tau))\in\mathbb{Z}^{n+1},\\
& k_{1}(\tau)\geq
k_{2}(\tau)
\geq\dots\geq k_{n}(\tau)\geq \left|k_{n+1}(\tau)\right|.
\end{split}
\end{equation}
If $\Lambda$ is a weight as in \eqref{Darstellungen von G}, then
\begin{align}\label{lambdatheta}
\Lambda_{\theta}=k_{1}(\tau)e_{1}+\dots+k_{n}(\tau)e_{n}-k_{n+1}(\tau)e_{
n+1}.
\end{align}
Now we let 
\begin{align}\label{hiweight}
\omega^+_{f,n}:=\sum_{j=1}^{n+1}e_j;\quad\omega^-_{f,n}:=(\omega^+_{f
,n})_\theta=\sum_{j=1}^{n}e_j-e_{n+1}.
\end{align}
Then $\frac{1}{2}\omega^{\pm}_{f,n}$ are the fundamental weights of
$\Delta^+(\gL_\C,\hL_\C)$ which are not invariant under $\theta$. 
We parametrize the  finite-dimensional irreducible representations 
$\sigma$ of $M^0$ by their highest weights
\begin{equation}\label{Darstellungen von M}
\begin{split}
\Lambda(\sigma)=&k_{2}(\sigma)e_{2}+\dots+k_{n+1}(\sigma)e_{n+1},
(k_2(\sigma),\dots,k_{n+1}(\sigma))\in\Z^{n} \\ 
&k_{2}(\sigma)\geq 
k_{3}(\sigma)\geq\dots\geq k_{n}(\sigma)\geq
\left|k_{n+1}(\sigma)\right|\in\mathbb{Z}^{n}.
\end{split}
\end{equation}
For $\sigma\in\Rep(M^0)$ with highest weight $\Lambda(\sigma)$ as in
\eqref{Darstellungen von M} we let
$w_0\sigma\in\Rep(M^0)$ be the representation with highest weight
\begin{align*}
\Lambda(w_0\sigma):=k_{2}(\sigma)e_{2}+\dots+k_n(\sigma)e_n-k_{n+1}(\sigma)e_{
n+1}.
\end{align*}
Then for every $\sigma\in\Rep(M^0)$ one has
$\check{\sigma}=\sigma$ if $n$ is even and $\check{\sigma}=w_0\sigma$ if $n$ is
odd.
Applying equation \eqref{Ppoly} this implies that 
\begin{align}\label{PpolyW}
P_\sigma(\lambda)=P_{w_0\sigma}(\lambda)=P_{\check{\sigma}}(\lambda)
\end{align}
for every $\sigma\in\Rep(M^0)$. 

Let $\tau\in\Rep(G)$ with highest weight $\tau_1e_1+\dots+\tau_{n+1}e_{n+1}$. 
For $k=0,\dots n$ let
\begin{align}\label{lambdatau}
\lambda_{\tau,k}=\tau_{k+1}+n-k
\end{align}
and let $\sigma_{\tau,k}$ be the representation of $G$ with highest weight
\begin{align}\label{sigmatau}
\Lambda_{\sigma_{\tau,k}}:=(\tau_{1}+1)e_{2}+\dots+(\tau_{k}+1)e_{k+1}
+\tau_{k+2}e_{k+2}+\dots+\tau_{n+1}e_{n+1}.
\end{align}
Then as in \cite[section 2.7]{MP} one has
\begin{equation}\label{lambdadecom}
\begin{split}
\{(\lambda_{\tau,w},\sigma_{\tau,w},l(w))\colon w\in W^{1}\}
&=\{(\lambda_{\tau,k},\sigma_{\tau,k},k)\colon k=0,\dots,n\}\\
&\sqcup\{(-\lambda_{\tau,k},w_{0}\sigma_{\tau,k},2n-k)\colon k=0,\dots,n\}.
\end{split}
\end{equation}
Combining \eqref{lambdatheta},  \eqref{PpolyW} and \eqref{lambdadecom} and
Proposition \ref{PropTor} it
follows that  
\begin{align}\label{Torstheta}
T_X^{(2)}(\tau)=T_X^{(2)}(\tau_\theta)
\end{align}
for each $\tau\in\Rep(G)$. Now for $p,q\in\N$ we let $\epsilon(q):=0$ for $q=1$
and 
$\epsilon(q):=1$ for $q>1$ and we let 
\begin{align}\label{const7}
C_{p,q}:=\frac{(-1)^{\frac{pq-1}{2}}2^{\epsilon(q)}\pi}{\vol(\tilde{X}
_d)}\begin{pmatrix}\frac{p+q-2}{2}\\
\frac{p-1}{2} \end{pmatrix} .
\end{align}
Then we have
\begin{prop}\label{L2spin}
For $p,q$, odd, $p\geq q$ let $\widetilde X=\SO^0(p,q)/\SO(p)\times\SO(q)$ and
let
$X=\Gamma\bs \widetilde X$. Let $\Lambda\in\hL^*_\C$ be a highest weight 
as in \eqref{Darstellungen von G} and assume that $\Lambda_\theta\neq\Lambda$. 
For $m\in\N$ let $\tau_\Lambda(m)$ be the 
irreducible representation of $\SO^0(p,q)$ with highest
weight $m\Lambda$. 
There exists a polynomial $P_\Lambda(m)$ whose coefficients depend only on
 $\Lambda$, such that for all $m\in\N$ we have
\begin{align*}
\log{T_X^{(2)}(\tau_\Lambda(m))}=C_{p.q}\vol(X)P_\Lambda(m).
\end{align*}
Moreover there is a constant $C_\Lambda>0$, which depends on $\Lambda$, 
such that
\begin{align}\label{eqP}
P_\Lambda(m)=C_\Lambda\cdot
m\dim(\tau_\Lambda(m))+O\left(\dim(\tau_\Lambda(m))\right)
\end{align}
as $m\to\infty$. If 
$\Lambda=\omega_{f,n}^\pm$, where $\omega_{f,n}^\pm$ are as in
\eqref{hiweight}, then $C_\Lambda=1$. 
\end{prop}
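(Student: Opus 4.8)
The plan is to feed $\tau=\tau_\Lambda(m)$ into Proposition \ref{PropTor} and to read off the $m$-dependence through the explicit parametrizations \eqref{lambdatau}, \eqref{sigmatau}, \eqref{lambdadecom}. First I would use \eqref{lambdatheta}: the hypothesis $\Lambda_\theta\neq\Lambda$ means exactly $k_{n+1}(\Lambda)\neq 0$, which together with the dominance inequalities of \eqref{Darstellungen von G} forces $k_1(\Lambda)\geq\cdots\geq k_n(\Lambda)\geq|k_{n+1}(\Lambda)|>0$. Writing $\Lambda(\tau_\Lambda(m))=\sum_{j}m\,k_j(\Lambda)\,e_j$, formulas \eqref{lambdatau} and \eqref{sigmatau} give $\lambda_{\tau_\Lambda(m),k}=m\,k_{k+1}(\Lambda)+n-k$ for $k<n$ and $\lambda_{\tau_\Lambda(m),n}=m\,k_{n+1}(\Lambda)$, so each $|\lambda_{\tau_\Lambda(m),k}|$ is a polynomial of degree one in $m$ with strictly positive leading coefficient; in particular $|\lambda_{\tau_\Lambda(m),w}|>0$ for all $w\in W^1$, as needed in Proposition \ref{PropTor}. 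Then I would collapse the $W^1$-sum using \eqref{lambdadecom}: the two families listed there contribute equally, since $(-1)^{2n-k}=(-1)^{k}$, since $P_\sigma$ is an even polynomial, and since $P_{\check\sigma}=P_\sigma=P_{w_0\sigma}$ by \eqref{PpolyW} (hence also $P_{\check{(w_0\sigma)}}=P_\sigma$). Thus Proposition \ref{PropTor} becomes
\[ \log T^{(2)}_X(\tau_\Lambda(m))=2(-1)^{v}\pi\vol(X)\,\frac{|W_\mL|}{|W_{K_M}|}\sum_{k=0}^{n}(-1)^{k}\int_0^{|\lambda_{\tau_\Lambda(m),k}|}P_{\sigma_{\tau_\Lambda(m),k}}(t)\,dt . \]

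Next I would read off polynomiality and the constant. By \eqref{sigmatau} the highest weight $\Lambda(\sigma_{\tau_\Lambda(m),k})$ is affine–linear in $m$, so by \eqref{Ppoly} every coefficient of the even polynomial $P_{\sigma_{\tau_\Lambda(m),k}}(t)$ is polynomial in $m$; since the upper limits of integration are linear in $m$, each integral above, hence the whole alternating sum, is a polynomial in $m$. Writing $P_\sigma(t)=(-1)^{n}c_{\widetilde X}\,Q_\sigma(t)$ with $Q_\sigma(t):=\prod_{\alpha\in\Delta^{+}(\gL_\C,\hL_\C)}\langle\alpha,\Lambda(\sigma)+\rho_M+t e_1\rangle/\langle\alpha,\rho_G\rangle$, I would combine \eqref{Konstante0} with Lemma \ref{LemWA} and the elementary identities $\dim\pL_\mL=(p-1)(q-1)$ (so $v=\tfrac12(p-1)(q-1)$), $n=\tfrac12(p+q-2)$ and $v+n=\tfrac12(pq-1)$ to check that the scalar $2(-1)^{v}\pi\vol(X)\,\tfrac{|W_\mL|}{|W_{K_M}|}\cdot(-1)^{n}c_{\widetilde X}$ equals precisely $C_{p,q}\vol(X)$ — this is where the sign $(-1)^{(pq-1)/2}$, the factor $2^{\epsilon(q)}$ and the binomial $\binom{n}{p_1}$ of \eqref{const7} appear. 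This yields $\log T^{(2)}_X(\tau_\Lambda(m))=C_{p,q}\vol(X)\,P_\Lambda(m)$ with
\[ P_\Lambda(m)=\sum_{k=0}^{n}(-1)^{k}\int_0^{|\lambda_{\tau_\Lambda(m),k}|}Q_{\sigma_{\tau_\Lambda(m),k}}(t)\,dt , \]
a polynomial in $m$ whose coefficients involve only $\Lambda$ and the fixed $p,q$, since $\vol(X)$ and $\vol(\widetilde X_d)$ have cancelled.

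For the asymptotic statement \eqref{eqP} I would extract the leading term of $P_\Lambda(m)$. Substituting $t=ms$ and replacing $\Lambda(\sigma_{\tau_\Lambda(m),k})$ and $|\lambda_{\tau_\Lambda(m),k}|$ by their degree-one parts in $m$ turns each summand into a power of $m$ times an integral of a product of linear forms in $s$; the surviving top-order contribution — after resolving the cancellations built into the alternating sum — should reassemble, via Weyl's dimension formula, into $C_\Lambda\cdot m\dim\tau_\Lambda(m)$, where $\dim\tau_\Lambda(m)$ has degree $\#\{\alpha\in\Delta^{+}(\gL_\C,\hL_\C):\langle\alpha,\Lambda\rangle\neq0\}$ and $C_\Lambda>0$ (the leading coefficient works out to a positive quantity). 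This is essentially the computation of \cite[section 2.7]{MP} for $q=1$, the difference being only that now $M^0=\SO^0(p-1,q-1)$ is noncompact, so $v>0$ in general. In the distinguished case $\Lambda=\omega^{+}_{f,n}$ one has $k_j(\Lambda)\equiv1$, hence $\lambda_{\tau_\Lambda(m),k}=m+n-k$ and the vector $\Lambda(\sigma_{\tau_\Lambda(m),k})+\rho_M+t e_1$ has coordinates $\bigl(t;\{m,m+1,\dots,m+n\}\setminus\{m+n-k\}\bigr)$, so $Q_{\sigma_{\tau_\Lambda(m),k}}(t)$ splits completely into linear forms; evaluating $\sum_{k}(-1)^{k}\int_0^{m+n-k}Q_{\sigma_{\tau_\Lambda(m),k}}(t)\,dt$ against Weyl's dimension formula for $\dim\tau_{\omega^{+}_{f,n}}(m)$ should give $C_{\omega^{+}_{f,n}}=1$, and $C_{\omega^{-}_{f,n}}=1$ then follows from \eqref{Torstheta}.

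The hard part is this last step: the alternating sum $\sum_{k}(-1)^{k}(\cdots)$ carries genuine cancellation, so one must verify that the surviving top-order term is nonzero and has exactly the asserted coefficient and sign — the safest route is to imitate the explicit $q=1$ computation of \cite{MP}. The constant bookkeeping of the second step (producing precisely $C_{p,q}$, including the sign $(-1)^{(pq-1)/2}$ and the binomial) is routine but must be carried out with care.
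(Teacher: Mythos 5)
Your first two steps — reducing the $W^1$-sum to $k=0,\dots,n$ with a factor of $2$ via \eqref{lambdadecom}, \eqref{PpolyW} and the duality/parity observations, reading off polynomiality in $m$ from \eqref{Ppoly} and \eqref{sigmatau}, and assembling the constant $C_{p,q}$ from \eqref{Konstante0}, Lemma \ref{LemWA}, $v+n=\tfrac{pq-1}{2}$, $|W_\mL|/|W_{\kL_\mL}|=2^{\epsilon(q)}\binom{n}{p_1}$ — are essentially the paper's argument, and this part is sound.

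The genuine gap is in your last step. You correctly flag it as ``the hard part,'' but your proposed route (scale $t=ms$, extract the top-order term of the alternating sum, hope the cancellation is benign) is not what the paper does, and as stated it does not resolve the cancellation: the summands you would obtain after scaling are all of the same leading order in $m$, the signs alternate, and nothing in your setup prevents the leading coefficient from vanishing. The paper closes this gap with a specific mechanism that you should cite explicitly if you want a complete proof. Namely: one first rewrites $P_{\sigma_{\tau(m),k}}$ in Lagrange-interpolation form,
\begin{equation*}
P_{\sigma_{\tau(m),k}}(t)=(-1)^{n+k}\,c_{\widetilde X}\,\dim\bigl(\tau_\Lambda(m)\bigr)\prod_{\substack{j=0\\ j\neq k}}^{n}\frac{t^{2}-\lambda_{\tau(m),j}^{2}}{\lambda_{\tau(m),k}^{2}-\lambda_{\tau(m),j}^{2}} ,
\end{equation*}
which simultaneously (a) pulls out the factor $\dim(\tau_\Lambda(m))$ you want in \eqref{eqP}, and (b) cancels the sign $(-1)^k$ against $(-1)^{n+k}$ so that the alternating sum $\sum_k(-1)^k\int_0^{\lambda_k}P_{\sigma_k}$ turns into a \emph{sign-free} sum $\dim(\tau_\Lambda(m))\sum_k\int_0^{\lambda_{\tau(m),k}}\Pi_k(t;m)\,dt$. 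Then, following \cite[Sect.\ 5.9.1]{BV}, one Abel-sums: putting $Q_k:=\sum_{j\le k}\Pi_j$, this equals $\sum_k\int_{\lambda_{\tau(m),k+1}}^{\lambda_{\tau(m),k}}Q_k(t;m)\,dt$, and the interpolation property \eqref{root} forces $0\le Q_k\le 1$ on each subinterval, giving the two-sided bound
\begin{equation*}
\tau_{n+1}m\;\le\;\sum_{k=0}^n\int_0^{\lambda_{\tau(m),k}}\Pi_k(t;m)\,dt\;\le\;(n+1)(m\tau_1+n).
\end{equation*}
Since the left and right sides are both linear in $m$ and the middle is a polynomial in $m$, the middle is exactly of degree one with positive leading coefficient $C_\Lambda\ge\tau_{n+1}>0$. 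That is what produces \eqref{eqP}. Without this interpolation-plus-Abel-summation step, or a genuinely worked-out substitute (an explicit computation à la \cite{MP}), your proposal does not establish that $C_\Lambda\neq 0$, let alone $C_\Lambda>0$.
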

\begin{proof}
Let $\Lambda=\tau_1e_1+\cdots+\tau_{n+1}e_{n+1}$. By \eqref{lambdatheta} and
\eqref{Torstheta} we may assume that $\tau_{n+1}>0$.
Put $\tau(m):=\tau_\Lambda(m)$. Then
\begin{equation}\label{lambdam}
\lambda_{\tau(m),k}=m\tau_{k+1}+n-k,\quad k=0,\dots,n,
\end{equation}
and by Proposition \ref{PropTor},\eqref{lambdadecom} and \eqref{PpolyW} we have
\begin{align*}
&\log T_X^{(2)}(\tau(m))\\
=&2\pi\vol(X)(-1)^{v}\frac{|W_{\mL}|}{|W_{K_M}|}\sum_{k=0}
^n(-1)^k\int_0^{\lambda_{\tau(m),k}}
P_{\sigma_{\tau(m),k}}(t)\,dt.
\end{align*}
In the hyperbolic case the term $(-1)^{v}|W_{\mL}|/|W_{K_M}|$ equals 1. 
Therefore this equation agrees with \cite[(5.16), (5.17)]{MP}. Note that
$n=\dim\nL$. Let $c_{\widetilde X}$ be defined by \eqref{Konstante0} and put
\begin{align}\label{polynom4}
P_\Lambda(m):=\frac{(-1)^{n}}{c_{\tilde{X}}}\sum_{k=0}
^n(-1)^k\int_0^{\lambda_{\tau(m),k}} P_{\sigma_{\tau(m),k}}(t)\,dt.
\end{align}
Then it follows from \eqref{Ppoly} and \eqref{lambdadecom} that $P_\Lambda$ is a
polynomial in $m$ whose coefficients depend only 
on $\Lambda$. By definition one has
\begin{align*}
\log T_X^{(2)}(\tau(m))=2\pi\vol(X)(-1)^{v+n}
\frac{|W_{\mL}|}{|W_{K_M}|}c_{\tilde{X}}P_\Lambda(m).
\end{align*}
So it remains to compute the constant. By \eqref{Konstante0} and Lemma 
\ref{LemWA} one has
\begin{align*}
\frac{|W_{\mL}|}{|W_{K_M}|}
c_{\tilde{X}}=\frac{|W_{\mL}|}{\left|W_{\kL_{\mL}}\right|}\frac{1}{
2\vol(\tilde{X}_d)}.
\end{align*}
Recall that $\mL_\C\cong \soL(2n,\C)$,
$(\kL_{\mL})_\C\cong\soL(2p_1,\C)\oplus\soL(2q_1,\C)$ and so by \cite[page
685]{Kn2} one has $|W_{\mL}|=n!2^{n-1}$,
$\left|W_{\kL_{\mL}}\right|=p_1!q_1!2^{n-1-\epsilon(q)}$, where 
$\epsilon(q)$ is as above. Thus 
as in \cite[Proposition 1.3]{Ol} one has
\begin{align*}
\frac{|W_{\mL}|}{\left|W_{\kL_{\mL}}\right|}=2^{\epsilon(q)}\begin{pmatrix}\frac
{p+q-2}{2}\\
\frac{p-1}{2} \end{pmatrix} .
\end{align*}
Furthermore one has $v=\frac{\dim\pL_{\mL}}{2}=\frac{(p-1)(q-1)}{2}$ and thus 
we get $v+n=\frac{pq-1}{2}$.
This proves the first part of the proposition.

To determine the highest order term of the polynomial $P_\Lambda(m)$, we
proceed 
as in \cite[Lemma 5.4]{MP} to show that
\[
 P_{\sigma_{\tau(m),k}}(t)=(-1)^{n+k} c_{\widetilde X}\dim(\tau(m))
\prod_{\substack{j=0\\ j\neq k}}^{n}\frac{t^{2}
-\lambda_{\tau(m),j}^{2}}{\lambda_{\tau(m),k}^{2}-\lambda_{\tau(m),j}^{2}}.
\]
Denote the product on the right by $\Pi_k(t;m)$. Then it follows from 
\eqref{polynom4} that
\begin{equation}\label{polynom5}
P_\Lambda(m)=\dim(\tau(m))\cdot\sum_{k=0}^n\int_0^{\lambda_{\tau(m),k}}
\Pi_k(t;m)\,dt.
\end{equation}
To deal with the sum, we follow \cite[5.9.1]{BV}. Put $\lambda_{\tau(m),n+1}=0$.
Then $\lambda_{\tau(m),k}$,
$k=0,\dots,n+1$ is a strictly decreasing sequence. For $k=0,\dots,n$ set
\[
Q_k(t;m):=\sum_{j=0}^k\Pi_j(t;m).
\]
Then $Q_k(t;m)$ is the unique even polynomial of degree $\le 2n$ which satisfies
\begin{equation}\label{root}
Q_k(\pm\lambda_{\tau(m),j})=\begin{cases}1,&\mathrm{if}\,\,j\le k,\\
0,&\mathrm{if}\,\, n\geq j>k.
\end{cases}
\end{equation}
Moreover we have
\begin{equation}\label{equat1}
\sum_{k=0}^n\int_0^{\lambda_{\tau(m),k}}\Pi_k(t;m)\,dt=\sum_{k=0}^n
\int_{\lambda_{\tau(m),k+1}}^{\lambda_{\tau(m),k}}Q_k(t;m)\,dt.
\end{equation}
As proved in \cite[Sect. 5.9.1]{BV}, each integral on the right is 
positive. This can be seen as follows. By \eqref{root}, the polynomial 
$Q_k^\prime$ has a root 
in each interval
$[\lambda_{\sigma_{\tau(m),j+1}},\lambda_{\sigma_{\tau(m),j}}]$,
$[-\lambda_{\sigma_{\tau(m),j}},-\lambda_{\sigma_{\tau(m),j+1}}]$ for $1\le
j<n$, 
$j\neq k$ and a 
root in $[-\lambda_{\sigma_{\tau(m),n}},\lambda_{\sigma_{\tau(m),n}}]$. Since
$Q_k^\prime$
is of degree $\le 2n-1$, it follows that $Q_k$ is either constant or strictly
increasing on $[\lambda_{\sigma_{\tau(m),k+1}},\lambda_{\sigma_{\tau(m),k}}]$.
Furthermore, $Q_n(t;m)$ is a polynomial of degree $2n$, which is equal to 1 
at $2n+2$ pairwise distinct points. Hence $Q_n\equiv 1$. Thus by 
\eqref{lambdam} and \eqref{equat1} we get
\begin{align}\label{inequ}
(n+1)(m\tau_1+n)=(n+1)\lambda_{\tau(m),0}&\geq\sum_{k=0}^n(\lambda_{\tau(m),k}
-\lambda_{\tau(m),k+1})\nonumber\\ &\geq\sum_{k=0}^n\int_0^{
\lambda_{\tau(m),k}}\Pi_k(t;m)\,dt\ge \tau_{n+1}m.
\end{align}
Since $P_\Lambda(m)$ is a polynomial in $m$, it follows that there exists
$C_\Lambda\ge 
\tau_{n+1}>0$ such that \eqref{eqP} holds . If $\Lambda$ is one of the
fundamental weight $\omega_{f,n}^\pm$,
defined by \eqref{hiweight}, then it follows as in \cite[Section 5]{MP} that
$C_\Lambda=1$. This proves the second part of the proposition. 
\end{proof}

Finally we turn to the case $G=\SL_3(\R)$, $K=\SO(3)$. We define our 
fundamental Cartan subalgebra as follows. Let
\begin{align*}
H_1:=\diag(1,1,-2);\quad \aL:= \R H_1.
\end{align*}
Then we have $\mL=\slL_2(\R)$, if $\slL_2(\R)$ is embedded into $\gL$ as an
upper left block.
Let 
\begin{align*}
H_2:=\begin{pmatrix}0&1\\-1&0\end{pmatrix},\quad \tL:=\R T_1
\end{align*}
embedded into $\gL$ as an upper left block.
Then $\tL$ is a Cartan subalgebra of $\mL$ and
\begin{align}\label{cartan}
\hL:=\aL\oplus\tL
\end{align}
is a $\theta$-stable fundamental Cartan subalgebra of $\gL$. Note that $\hL$ is
different from 
the usual Cartan subalgebra $ \tilde{\hL}$ of $\gL$ which consist of all
diagonal matrices of trace $0$.  
Define $f_1\in\aL^*$ and $f_2\in i\tL^*$ by
\begin{align*}
f_1(H_1)=3; \quad f_2(H_2)=i.
\end{align*}
We fix $f_1$ as a positive restricted root of $\aL$. Then we can define positive
roots by
\begin{align*}
\Delta^+(\gL_\C,\hL_\C):=\{f_1-f_2,\:f_1+f_2,\: 2f_2\};\quad
\Delta^+(\mL_\C,\tL_\C)=\{2f_2\}.
\end{align*}
Under our normalization one has 
\begin{align}\label{inProd}
\left<f_1,f_1\right>=1;\quad \left<f_2,f_2\right>=\frac{1}{3};\quad
\left<f_1,f_2\right>=0.
\end{align} 
One easily sees that $\dim\nL=2$, hence $n=1$. Moreover by 
\cite[page 485]{Kn2} one has $|W(A)|=1$.
For $k\in\N$ let $\sigma_k\in\Rep(M^0)$ be of 
highest weight $kf_2$. Then it follows from \eqref{Ppoly} and 
\eqref{Konstante0} that
\begin{align}\label{Ppolysldrei}
P_{\sigma_k}(z)=-\frac{9}{8\vol(\tilde{X}_d)}\:(k+1)\left(z^2-\left(\frac
{k+1}{3}
\right)^2\right).
\end{align}
Define $e_i\in \tilde{\hL}_\C^*$ by
$e_i(\diag(t_1,t_2,t_3))=\sum_j\delta_{i,j}t_j$.
Then one can choose positive roots 
\begin{align}\label{roots6}
\Delta^+(\gL_\C, \tilde{\hL}_\C):=\{e_1-e_2,\:e_1-e_3,\:e_2-e_3\}
\end{align}
and there is a standard inner-automorphism $\Phi$ of $\gL_\C$ which sends
$\hL_\C$ to $\tilde{\hL}_\C$ and which satisfies 
\begin{align}\label{isom6}
\Phi^*(e_1-e_2)=2f_2;\quad \Phi^*(e_1-e_3)= f_1+f_2;\quad \Phi^*(e_2-e_3)=
f_1-f_2.
\end{align}
The fundamental weights
$\tilde{\omega}_1,\tilde{\omega}_2\in\tilde{\hL}_\C^*$  
are given by
\begin{align*}
\tilde{\omega}_1=\frac{2}{3}(e_1-e_2)+\frac{1}{3}(e_2-e_3)
\end{align*}
and
\begin{align*}
\tilde{\omega}_2=\frac{1}{3}(e_1-e_2)+\frac{2}{3}(e_2-e_3).
\end{align*}
Thus the fundamental weights $\omega_1,\omega_2\in\hL_\C^*$ are given by
\begin{align}\label{fundwei1}
\omega_1:=\Phi^*(\tilde{\omega}_1)=\frac{1}{3}f_1+f_2;\quad
\omega_2:=\Phi^*(\tilde{\omega}_2)=\frac{2}{3}f_1.
\end{align}
Let $\N_0:=\N\sqcup\{0\}$. If $\Lambda$ is a weight, $\Lambda=
\tau_1\omega_1+\tau_2\omega_2$,
$\tau_1,\tau_2\in\N_0$, then a standard computation 
shows that
\begin{align}\label{Lambdatheta2}
\Lambda_{\theta}=\tau_2\omega_1+\tau_1\omega_2.
\end{align}
Now we fix $\tau_1, \tau_2\in\N_0$, $\tau_1+\tau_2>0$ 
and for $m\in\N$ we let 
$\tau(m)$ be the representation of $G$ with highest weight 
\begin{align}\label{highweight}
\Lambda(\tau(m)):=m\tau_1\omega_1+m\tau_2\omega_2.
\end{align}
We let
$\tilde{W}_{\gL}$ be the Weyl-group of $\Delta(\gL_\C,\tilde{\hL}_\C)$. 
Then $\tilde{W}_{\gL}$ consists of all permutations of $e_1,e_2,e_3$.
Let
\begin{align*}
\tilde{W}^1:=(\Phi^*)^{-1}W^1=\{w\in \tilde{W}_{\gL}\colon w^{-1}(e_1-e_2)>0\}.
\end{align*}
Then one has 
\begin{align*}
&\{(w,\ell(w)); w\in
\tilde{W}^1\} \\ =&\biggl\{(\Id,0);\:\biggl(\begin{pmatrix}e_1&e_2&e_3\\ e_1&
e_3&e_2\end{pmatrix},1\biggr);\:\biggl(\begin{pmatrix}e_1&e_2&e_3\\
e_3&e_1&e_2\end{pmatrix},2\biggr)\biggr\}.
\end{align*}
By a direct computation we get
\begin{equation}\label{wtilde}
\begin{split}
&\{w(\Lambda(\tau(m))+\tilde{\rho}_{G}),\ell(w);w\in
\tilde{W}^1\}\\
&=\biggl\{\left(\frac{2m\tau_1+m\tau_2+3}{3}(e_1-e_2)
+\frac{m\tau_1+2m\tau_2+3}{3}(e_2-e_3);0\right),\\
&\hskip1truecm\left(\frac{2m\tau_1+m\tau_2+3}{3}(e_1-e_2)
+\frac{m\tau_1-m\tau_2}{3}(e_2-e_3);1\right),\\
&\hskip1truecm\left(\frac{-m\tau_1+m\tau_2}{3}(e_1-e_2)+\frac{
-2m\tau_1-m\tau_2-3}{3}
(e_2-e_3);2\right)\biggr\}.
\end{split}
\end{equation}
As in \cite[5.9.2]{BV} we introduce the following constants 
\begin{equation}\label{Glsldrei1}
\begin{split}
&A_{1}(\tau(m)):=\frac{m\tau_1+1}{2};\:
A_2(\tau(m)):=\frac{m\tau_1+m\tau_2+2}{2};\\
&A_3(\tau(m)):=\frac{m\tau_2+1}{2}
\end{split}
\end{equation}
and 
\begin{equation}\label{Glsldrei2}
\begin{split}
&C_1(\tau(m)):=\frac{m\tau_1+2m\tau_2+3}{3};\:
C_2(\tau):=\frac{m\tau_1-m\tau_2}{3}; \\
&C_3(\tau):=\frac{2m\tau_1+m\tau_2+3}{3}.
\end{split}
\end{equation}
Note that on $\tilde{\hL}_\C^*$ one has 
$\tilde{\omega}_1=e_1;\quad \tilde{\omega}_2=e_1+e_2$,
since the matrices in $\tilde{\hL}_\C^*$ have trace $0$.
Then, combining \eqref{isom6} and \eqref{wtilde}, we get
\begin{align*}
\begin{split}
&\{\left(\Lambda(\sigma_{\tau(m),w}),\lambda_{\tau(m),w},\ell(w)\right);w\in
W^1\} =\\ &\{\left((2A_{1}(\tau(m))-1)f_2,C_{1}(\tau(m)),0\right),
\left((2A_{2}(\tau(m))-1)f_2,C_{2}(\tau(m)),1\right),
\\ &\left((2A_{3}(\tau(m))-1)f_2,-C_{3}(\tau(m)),2\right)\}.
\end{split}
\end{align*}
Thus if we apply \eqref{Ppolysldrei} we obtain 
\begin{align}\label{Glsldrei3}
&\sum_{w\in
W^{1}}(-1)^{\ell(w)}\int_{0}^{|\lambda_{\tau(m),w}|}P_{\sigma_{\tau(m),w}}
(t)dt\nonumber\\
=&-\frac{C_{\SL_3(\R)}}{\vol(\tilde{X}_d)}\sum_{k=1}^{3}(-1)^{k+1}A_{k}
(\tau(m))\int_{0}^{
\left|C_k(\tau(m))\right|}
\left(\frac{9}{4}t^2-A_{k}(\tau(m))^2\right)dt\nonumber\\
=&-\sum_{k=1}^3(-1)^{k+1}\frac{
A_{k}(\tau(m))|C_{k}(\tau(m))|}{4\vol(\tilde{X}_d)}
\left(3C_{k}(\tau(m))^2-4A_{k}(\tau(m))^2\right).
\end{align}
We can now prove our main result about the $L^2$-torsion for the case
$G=SL_3(\R)$.
\begin{prop}\label{L2SL3}
Let $\widetilde X=\SL(3,\R)/\SO(3)$ and $X=\Gamma\bs \widetilde X$. 
Let  $\Lambda\in\hL^*_\C$ be a highest weight with $\Lambda_\theta\neq\Lambda$.
For $m\in\N$ let $\tau_\Lambda(m)$ be the irreducible representation of 
$\SL(3,\R)$ with highest weight $m\Lambda$. 
There exists a polynomial $P_\Lambda$ whose coefficients depend only on
$\Lambda$ such that 
\begin{align*}
\log T_X^{(2)}(\tau_{\Lambda}(m))=\frac{\pi\vol(X)}{\vol(\widetilde X_d)}
P_\Lambda(m).
\end{align*}
Moreover, there exists a constant $C(\Lambda)>0$ depending only on $\Lambda$
such that 
\begin{align*}
P_\Lambda(m)=C(\Lambda)m\dim(\tau_{\Lambda}(m))+O(\dim(\tau_{\Lambda}(m))), 
\end{align*}
as $m\to\infty$. If $\Lambda$ equals one of the fundamental weights 
$\omega_{f,i}$ then $C(\Lambda)=4/9$.
\end{prop}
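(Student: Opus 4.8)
The plan is to substitute the explicit data assembled above for $G=\SL_3(\R)$ into Proposition \ref{PropTor} and then read off a polynomial identity; the genuinely computational part, the closed form \eqref{Glsldrei3} for the relevant alternating sum of integrals, is already in hand, so what remains is the bookkeeping of the constants followed by a leading--order analysis.

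First I would pin down the constant in Proposition \ref{PropTor} for this group. Since $\mL\cong\slL_2(\R)$ we have $\dim\pL_{\mL}=2$, hence $v=\tfrac12\dim\pL_{\mL}=1$; since $\kL_{\mL}\cong\soL(2)$ has trivial Weyl group and $|W(A)|=1$, Lemma \ref{LemWA} gives $|W_{K_M}|=2$, while $|W_{\mL}|=|W(\slL_2(\C))|=2$, so $|W_{\mL}|/|W_{K_M}|=1$. Moreover every finite--dimensional representation of $M^0$ is self--dual, so $P_{\check\sigma_{\tau,w}}=P_{\sigma_{\tau,w}}$. Thus Proposition \ref{PropTor} reads $\log T_X^{(2)}(\tau_\Lambda(m))=-\pi\vol(X)\sum_{w\in W^1}(-1)^{\ell(w)}\int_0^{|\lambda_{\tau_\Lambda(m),w}|}P_{\sigma_{\tau_\Lambda(m),w}}(t)\,dt$, and inserting the parametrization of $W^1$ recorded just before \eqref{Glsldrei3}, the Plancherel polynomial \eqref{Ppolysldrei}, and the identity \eqref{Glsldrei3}, this becomes
\[
\log T_X^{(2)}(\tau_\Lambda(m))=\frac{\pi\vol(X)}{\vol(\widetilde X_d)}\,P_\Lambda(m),
\]
where, writing $\tau(m):=\tau_\Lambda(m)$,
\[
P_\Lambda(m):=\frac14\sum_{k=1}^3(-1)^{k+1}A_k(\tau(m))\,|C_k(\tau(m))|\,\bigl(3C_k(\tau(m))^2-4A_k(\tau(m))^2\bigr),
\]
with $A_k,C_k$ given by \eqref{Glsldrei1}--\eqref{Glsldrei2}, all affine in $m$.

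Next I would check that $P_\Lambda$ is genuinely a polynomial in $m$ with coefficients depending only on $\Lambda$. The only term that is not manifestly polynomial is $|C_2(\tau(m))|=\tfrac13 m|\tau_1-\tau_2|$; but by \eqref{Lambdatheta2} the hypothesis $\Lambda_\theta\neq\Lambda$ is precisely $\tau_1\neq\tau_2$, so $C_2(\tau(m))$ keeps a fixed sign for all $m\in\N$ and its absolute value may be replaced by $\pm C_2(\tau(m))$. Hence $P_\Lambda$ is a polynomial in $m$ of degree at most $4$, with rational coefficients built from $\tau_1,\tau_2$, and $\log T_X^{(2)}(\tau_\Lambda(m))=\frac{\pi\vol(X)}{\vol(\widetilde X_d)}P_\Lambda(m)$ holds for all $m\in\N$; this is the first assertion.

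For the leading term, Weyl's dimension formula together with the definitions \eqref{Glsldrei1} gives $\dim\tau_\Lambda(m)=4\,A_1(\tau(m))A_2(\tau(m))A_3(\tau(m))$, which has degree $3$ if $\Lambda$ is regular and degree $2$ if $\Lambda$ lies on a wall, so that $m\dim\tau_\Lambda(m)$ has degree $4$, respectively $3$. I would then extract the top coefficient of $P_\Lambda$ from the displayed sum: with $a_k:=\lim_{m\to\infty}A_k(\tau(m))/m$ and $c_k:=\lim_{m\to\infty}|C_k(\tau(m))|/m$, the coefficient of $m^4$ in $P_\Lambda$ equals $\tfrac14\sum_{k=1}^3(-1)^{k+1}a_kc_k(3c_k^2-4a_k^2)$, and one checks that it vanishes exactly when $\Lambda$ lies on a wall, matching the drop in degree of $\dim\tau_\Lambda(m)$; hence $\deg P_\Lambda=\deg(m\dim\tau_\Lambda(m))$ in every case and $C(\Lambda):=\lim_{m\to\infty}P_\Lambda(m)/\bigl(m\dim\tau_\Lambda(m)\bigr)$ exists. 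Positivity of $C(\Lambda)$ is then read off from the explicit quartic; alternatively one may reorganize the alternating sum of integrals into a sum of manifestly nonnegative pieces in the spirit of \cite[5.9.2]{BV}, as in the proof of Proposition \ref{L2spin}. Finally, for $\Lambda$ a fundamental weight it suffices, by \eqref{Lambdatheta2} and \eqref{Torstheta}, to treat $\Lambda=\omega_1$, i.e. $(\tau_1,\tau_2)=(1,0)$: a direct evaluation of \eqref{Glsldrei1}--\eqref{Glsldrei3} yields $P_\Lambda(m)=\tfrac1{18}\bigl(4m^3+18m^2+24m+9\bigr)$ and $\dim\tau_\Lambda(m)=\tfrac12(m+1)(m+2)$, and comparison of leading coefficients gives $C(\Lambda)=4/9$. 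The main obstacle is not any individual step but this leading--order cancellation: one has to be sure the degree--$4$ coefficient of $P_\Lambda$ vanishes exactly on the walls and that the surviving leading coefficient is strictly positive for every admissible $\Lambda$, not merely for the fundamental weights.
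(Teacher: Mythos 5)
Your proof is correct and follows essentially the same route as the paper: substitute the explicit $\SL_3(\R)$ data into Proposition~\ref{PropTor}, use the combinatorics recorded in \eqref{Glsldrei1}--\eqref{Glsldrei3} to obtain the cubic/quartic polynomial $P_\Lambda$, and then compare leading coefficients against Weyl's dimension formula; the computation of the constant prefactor (via $v=1$, $|W_\mL|/|W_{K_M}|=1$, self-duality of $\Rep(M^0)$) and the final numerical check $C(\omega_1)=4/9$ all match. The one place where you are less explicit than the paper is the positivity and degree of the leading coefficient: you observe that the $m^4$ coefficient vanishes exactly on the walls and that $C(\Lambda)>0$ can be "read off," whereas the paper simply splits into the two cases $\tau_1\tau_2\neq 0$ and $\tau_1\tau_2=0$ and writes out the leading coefficients in closed form, namely $\alpha_4(\tau)=-\tfrac{\tau_2^4}{18}+\tfrac{2\tau_1^3\tau_2}{9}+\tfrac{\tau_1^2\tau_2^2}{3}$ (for $\tau_1\ge\tau_2$, and symmetrically otherwise) and $\alpha_3(\tau)=\tfrac{2(\tau_1^3+\tau_2^3)}{9}$, alongside the dimension-formula coefficients $d_3(\tau)=\tfrac{\tau_1^2\tau_2+\tau_2^2\tau_1}{2}$ and $d_2(\tau)=\tfrac{4\tau_1\tau_2+\tau_1^2+\tau_2^2}{2}$, so that both the degree drop on the walls and the strict positivity of $C(\Lambda)=\alpha_4/d_3$ (respectively $\alpha_3/d_2$) are immediate; you would want to carry out this short explicit check rather than appeal to the \cite{BV}-style reorganization, which was needed in the $\SO^0(p,q)$ case but is unnecessary here.
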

\begin{proof}
There exist $\tau_1,\tau_2\in\N_0$, $\tau_1\neq \tau_2$, such that
$\Lambda=\tau_1\omega_1+\tau_2\omega_2$. 
Put $\tau(m):=\tau_\Lambda(m)$. Then by Proposition \ref{PropTor}, equation
\eqref{Glsldrei1}, \eqref {Glsldrei2} and \eqref{Glsldrei3}, 
the first statement is proved and it remains to consider the asymptotic
behavior of the polynomial $P_\Lambda$.
We differentiate two cases. First we assume that $\tau_1\tau_2\neq 0$.
Then if we put
\begin{align*}
\alpha_4(\tau):=
\begin{cases}
-\frac{\tau_2^4}{18}+\frac{2\tau_1^3\tau_2}{9}+\frac{\tau_1^2\tau_2^2}{3}; &
\tau_1\geq
\tau_2\\-\frac{\tau_1^4}{18}+\frac{2\tau_2^3\tau_1}{9}+\frac{\tau_1^2\tau_2^2}{3
}; &
\tau_2\geq \tau_1,
\end{cases}
\end{align*}
an explicit computation using equation
\eqref{Glsldrei1}, \eqref {Glsldrei2} and \eqref{Glsldrei3} shows that 
\begin{align*}
\sum_{w\in
W^{1}}(-1)^{\ell(w)}\int_{0}^{|\lambda_{\tau(m),w}|}P_{\sigma_{\tau(m),w}}
(t)dt=-\frac{\alpha_4(\tau)}{\vol(\tilde{X}_d)} m^4+O(m^3),
\end{align*}
as $m\to\infty$. Note that $\alpha_4(\tau)>0$  by our
assumption on $\tau_1$ and $\tau_2$. 
Now we assume that $\tau_1\tau_2=0$.
Then if we define
\begin{align*}
\alpha_3(\tau):=\frac{2(\tau_1^3+\tau_2^3)}{9},
\end{align*}
an explicit computation using equation
\eqref{Glsldrei1}, \eqref {Glsldrei2} and \eqref{Glsldrei3} gives
\begin{align*}
\sum_{w\in
W^{1}}(-1)^{\ell(w)}\int_{0}^{\lambda_{\tau(m),w}}P_{\sigma_{\tau(m),w}}
(t)dt=-\frac{\alpha_3(\tau)}{\vol(\tilde{X}_d)} m^3+O(m^2),
\end{align*}
as $m\to\infty$. 
For $\SL_3(\R)$ one has $v=1$ and using Lemma \ref{LemWA} one gets
$\frac{|W_{\mL}|}{|W_{K_M}|}=1$. Moreover, every element of $\Rep(M^0)$ is
self-dual. Thus using
Proposition \ref{PropTor} we obtain 
\begin{align*}
\log T_X^{(2)}(\tau(m))=\vol(X)\frac{\pi \alpha_4(\tau)}
{\vol(\tilde{X}_d)}m^4+O(m^3) 
\end{align*}
as $m\to\infty$, if $\tau_1\tau_2\neq 0$,
and 
\begin{align*}
\log T_X^{(2)}(\tau(m)=\vol(X)\frac{\pi\alpha_3(\tau)}{\vol(
\tilde{X}_d)}m^3+O(m^2),
\end{align*}
as $m\to \infty$, if $\tau_1\tau_2=0$. Now we define constants 
\begin{align*}
d_3(\tau):=\frac{\tau_1^2\tau_2+\tau_2^2\tau_1}{2};\quad
d_2(\tau):=\left(\frac{
4\tau_1\tau_2+\tau_1^2+\tau_2^2}{2}\right).
\end{align*}
Then by Weyl's dimension formula one has
\begin{align*}
\dim\tau(m)=d_3(\tau)m^3+d_2(\tau)m^2+O(m),
\end{align*}
as $m\to\infty$. Note that $d_3(\tau)>0$ for $\tau_1\tau_2\neq 0$ and that
$d_3(\tau)=0$, $d_2(\tau)>0$ 
for $\tau_1\tau_2=0$. This completes the proof of the proposition.
\end{proof}

\section{Lower bounds of the spectrum}\label{seclowbd}
\setcounter{equation}{0}
In this section we assume that $\widetilde{X}$ 
is odd-dimensional and that $\delta(\widetilde X)=1$. Our goal is to
establish the lower bound \eqref{lowerbd0} for the spectrum of the Laplace 
operators $\Delta_p(\tau_\lambda(m))$. To this end we use \eqref{bochhodge1},
which reduces the problem to the estimation from below of the endomorphism 
$E_p(\tau_\lambda(m))$. \\
First we introduce some notation.
Let $\widetilde X=G/K$. 
There is a decomposition $\widetilde X=\widetilde X_0\times\widetilde X_1$ with 
$\delta(\widetilde X_0)=0$ and $\widetilde X_1$ is an irreducible symmetric
space with $\delta(\widetilde X_1)=1$. Since $\widetilde X_0$ is
even-dimensional, the dimension of
$\widetilde X_1$ is odd. Let $G=G_0\times G_1$ be the corresponding
decomposition of $G$. Then $\delta(G_0)=0$ and by the classification of simple
Lie groups, 
$G_1=\SO^0(p,q)$, $p,q$ odd,
or $G_1=\SL(3,\R)$. Let $\gL_i$, $i=0,1$ be the Lie algebra of $G_i$. Let
$\tL_0\subset \gL_0$ be a compact Cartan subalgebra and let $\hL_1\subset\gL_1$
be a fundamental Cartan subalgebra. Then $\hL_1$ is of split rank one. Put
\[
\hL:=\tL_0\oplus\hL_1.
\] 
Then $\hL$ is a Cartan subalgebra of split rank one. 
Let $(\tau,V_\tau)\in\Rep(G)$ with highest weight $\lambda\in\hL^*_\C$.
Then $\lambda=\lambda_0+\lambda_1$, where $\lambda_0\in\tL_{0,\C}^*$ and 
$\lambda_1\in\hL_{1,\C}^*$ are highest weights. Let $\theta\colon\gL\to\gL$ be 
the Cartan involution. Assume
that $\lambda_\theta\neq\lambda$. Then $\lambda_1$ satisfies $(\lambda_1)_\theta
\neq\lambda_1$. Let $(\tau_i,V_{\tau_i})\in\Rep(G_i)$, $i=0,1$, be the 
representations with highest weight $\lambda_i$. Then 
$\tau\cong\tau_0\otimes\tau_1$. Let 
\[
\gL_i=\kL_i\oplus\pL_i
\]
be the Cartan decomposition of $\gL_i$, $i=0,1$. We may choose $\pL$ such that
$\pL=\pL_0\oplus\pL_1$. Then we have
\[
\Lambda^p\pL^*\otimes V_\tau\cong\bigoplus_{r+s=p}\left(\Lambda^r\pL_0^*\otimes
V_{\tau_0}\right)\otimes \left(\Lambda^s\pL_1^*\otimes V_{\tau_1}\right)
\]
Let $\Omega_i
\in{\mathcal Z}(\gL_{i,\C})$, $i=1,2$, be the Casimir operator of $\gL_i$. Then
$\Omega=\Omega_0\otimes\Id + \Id\otimes\Omega_1$. Similarly, we have
$\Omega_K=\Omega_{0,K}\otimes\Id + \Id\otimes\Omega_{1,K}$.
Set
\[
\nu_{i,p}(\tau_i):=\Lambda^p\Ad^*_{\pL_i}\otimes \tau_i\colon K_i\to 
\GL(\Lambda^p\pL_i^*\otimes V_{\tau_i}), \quad i=0,1.
\]
Let
\begin{equation}\label{endoi}
E_{i,p}(\tau_i):=\tau_i(\Omega_i)\Id_i-\nu_p(\tau_i)(\Omega_{i,K}),\quad i=0,1.
\end{equation}
be the corresponding endomorphisms acting in $\Lambda^p\pL_i^*\otimes
V_{\tau_i}$. Then it follows that
\begin{equation}\label{decomp3}
E_p(\tau)=\bigoplus_{r+s=p}\left(E_{0,r}(\tau_0)\otimes \Id + \Id\otimes
E_{1,s}(\tau_1)\right).
\end{equation}
Therefore it suffices to estimate $E_{i,p}(\tau_i)$, $i=0,1$.\\
Let us first recall the general formula for the Casimir eigenvalues.  We let
$\gL$ be a semisimple real Lie algebra with Cartan decomposition
$\gL=\kL\oplus\pL$. Let $\tL$ be a Cartan subalgebra of $\kL$ and let
$\hL=\tL\oplus \bL$, $\bL\subset \pL$ be a $\theta$-stable
Cartan subalgebra of $\gL$ containing $\tL$. Let the associated groups $G$ and
$K$ be as in the
introduction. Let $\left\|\cdot\right\|$ denote the norm induced by the 
(suitably normalized)
Killing form 
on the real vector space $\bL^*\oplus i\tL^*$. 
Fix positive roots $\Delta^+(\gL_\C,\hL_\C)$, $\Delta^+(\kL_\C,\tL_\C)$
and let 
$\rho_G$ resp. $\rho_K$ be the half sum of the positive roots.
Let $\tau$ be an irreducible finite-dimensional complex representation of $G$
with 
highest weight $\Lambda(\tau)\in\bL^*\oplus i\tL^*$ and let $\nu$ be an
irreducible 
unitary representation of $K$ with highest weight
$\Lambda(\nu)\in i\tL^*$.
Then we have 
\begin{align}\label{casimir}
\tau(\Omega)=\|\Lambda(\tau)+\rho_G\|^2-\|\rho_G\|^2; \quad
\nu(\Omega_K)=\|\Lambda(\nu)+\rho_K\|^2-\|\rho_K\|^2.
\end{align}
Now we prove the following general bound, which we use to deal with
$E_{0,p}(\tau_0)$.
\begin{lem}\label{genbd}
Let $\lambda\in\hL^*_\C$ be a highest weight. Given $m\in\N$, let 
$\tau_\lambda(m)$ be the irreducible representation with highest weight 
$m\lambda$. There exists $C>0$ such that 
\[
E_p(\tau_\lambda(m))\ge -Cm
\]
for all $p=0,\dots,d$ and $m\in\N$. 
\end{lem}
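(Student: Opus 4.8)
The plan is to use the Casimir eigenvalue formulas \eqref{casimir} to get an explicit expression for $E_p(\tau_\lambda(m))$ and show that the only possible negative contribution grows at most linearly in $m$. First I would decompose $\nu_p(\tau_\lambda(m)) = \Lambda^p\Ad^*\otimes\tau_\lambda(m)$ into irreducible $K$-subrepresentations $\sigma$, with highest weights $\Lambda(\sigma)\in i\tL^*$; by \eqref{bhdecomp1} it suffices to bound $m\lambda(\Omega) \cdot \Id - \sigma(\Omega_K)$ from below for each such $\sigma$, i.e.\ to bound $\tau_\lambda(m)(\Omega) - \sigma(\Omega_K)$ from below by $-Cm$. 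Using \eqref{casimir} this difference equals
\[
\|m\lambda + \rho_G\|^2 - \|\rho_G\|^2 - \|\Lambda(\sigma)+\rho_K\|^2 + \|\rho_K\|^2.
\]

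Next I would control the highest weights $\Lambda(\sigma)$ that can occur. Every weight of $\nu_p(\tau_\lambda(m)) = \Lambda^p\pL^*\otimes V_{\tau_\lambda(m)}$ is of the form $\mu + m\lambda'$, where $\mu$ is a weight of $\Lambda^p\pL^*$ (a fixed finite set, independent of $m$) and $\lambda'$ is a weight of $V_\lambda$ — hence $\lambda' = \lambda - \sum_\alpha n_\alpha\alpha$ with $n_\alpha\ge 0$, so that $\|m\lambda'\| \le m\|\lambda\|$ and in fact $\langle m\lambda' + 2\rho_K, m\lambda'\rangle$ is controlled. In particular any highest weight $\Lambda(\sigma)$ of a constituent $\sigma$ satisfies $\|\Lambda(\sigma)\| \le m\|\lambda\| + c_p$ for a constant $c_p$ depending only on $p$ and the fixed weights of $\Lambda^p\pL^*$. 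Therefore
\[
\|\Lambda(\sigma) + \rho_K\|^2 \le \big(m\|\lambda\| + c_p + \|\rho_K\|\big)^2 = m^2\|\lambda\|^2 + O(m),
\]
while $\|m\lambda + \rho_G\|^2 = m^2\|\lambda\|^2 + 2m\langle\lambda,\rho_G\rangle + \|\rho_G\|^2 = m^2\|\lambda\|^2 + O(m)$. The leading $m^2\|\lambda\|^2$ terms cancel, leaving $\tau_\lambda(m)(\Omega) - \sigma(\Omega_K) \ge -Cm$ for a uniform constant $C$, which is exactly the claimed bound via \eqref{bhdecomp1}.

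The main technical point — and the step I expect to require the most care — is the uniform bound $\|\Lambda(\sigma)\|\le m\|\lambda\| + c_p$ on the highest weights of the $K$-constituents of $\Lambda^p\pL^*\otimes V_{\tau_\lambda(m)}$, independently of $m$. This follows because the weights of $V_{\tau_\lambda(m)}$ lie in the convex hull of the $W$-orbit of $m\lambda$, hence have norm at most $m\|\lambda\|$, and tensoring with the fixed finite-dimensional module $\Lambda^p\pL^*$ shifts weights by a bounded amount; a highest weight of a constituent is in particular a weight of the tensor product. One should also note that the norms $\|\cdot\|$ on $i\tL^*$ coming from the restriction of the Killing form and the one used for $\rho_K$ are compatible, and that restriction from $\hL^*$ to $i\tL^*$ is norm-nonincreasing on the relevant components, so no extra constants sneak in at leading order. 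Once these elementary facts are assembled, the cancellation of the $m^2$ terms gives the lemma with $C$ depending only on $\lambda$ and the (finitely many) values of $p$.
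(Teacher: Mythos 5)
Your proof is correct and takes essentially the same route as the paper: reduce via \eqref{bhdecomp1} and \eqref{casimir} to bounding $\tau_\lambda(m)(\Omega)-\nu(\Omega_K)$ for $K$-constituents $\nu$ of $\nu_p(\tau_\lambda(m))$, observe that any such $\nu$ has highest weight equal to a weight of $\Lambda^p\pL^*$ plus the $\tL$-restriction of a weight of $\tau_\lambda(m)|_K$, use the convex-hull bound to get $\|\text{weight of }V_{\tau_\lambda(m)}\|\le m\|\lambda\|$, and note restriction to $\tL$ is norm-nonincreasing so the $m^2$ terms cancel. One small inaccuracy worth flagging: the weights of $V_{\tau_\lambda(m)}$ are \emph{not} of the form $m\lambda'$ with $\lambda'$ a weight of $V_\lambda$ (e.g.\ for $\slL_2$, $V_{2}$ has weight $0$ which is not twice a weight of $V_1$); but you also invoke the convex hull of the $W$-orbit of $m\lambda$, which is the correct argument for the norm bound, so the proof goes through.
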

\begin{proof}
Let $\tau\in\Rep(G)$ be of highest weight $\Lambda(\tau)$. Let
$\nu^\prime\in\hat K$ with highest weight $\Lambda(\nu^\prime)\in i\tL^*$.
Assume 
that $[\tau|_K\colon\nu^\prime]\neq 0$. We claim that there is a weight 
$\lambda$ of 
$\tau$ such that $\Lambda(\nu^\prime)=\lambda|_{\tL}$. To see this, let 
$V_\tau$ be the 
space of the representation $\tau$ and let $V_\tau(\Lambda(\nu^\prime))$ be the 
eigenspace of $\tL$ with eigenvalue $\Lambda(\nu^\prime)$. Then 
$V_\tau(\Lambda(\nu^\prime))$ is invariant under $\hL$. So it decomposes into
joint eigenspaces of $\hL$. Let $\lambda$ be the weight of 
one of these eigenspaces. Then $\lambda|_{\tL}=\Lambda(\nu^\prime)$. Now we 
note that as a 
weight of $\tau$, $\lambda$ belongs to the convex hull of the Weyl group 
orbit of $\Lambda(\tau)$ (see \cite[Theorem 7.41]{Ha}). Thus we get
\begin{equation}\label{lambdaest}
\|\Lambda(\tau)\|\ge \|\lambda\| \ge \|\lambda|_\tL\|
=\|\Lambda(\nu^\prime)\|.
\end{equation}
Now let $\nu\in \widehat K$ with $[\nu_p(\tau)\colon\nu]\neq0$. Then by
\cite[Proposition 9.72]{Kn2} there exists $\nu^\prime\in\widehat K$ with
$[\tau|_K\colon\nu^\prime]\neq0$ of highest weight $\Lambda(\nu^\prime)\in 
i\tL^*$ and $\mu\in i\tL^*$  which is a weight of $\nu_p$ such that
the
highest weight $\Lambda(\nu)$ of $\nu$ is given by $\mu+\Lambda(\nu^\prime)$.
Since 
$\Lambda(\tau)$ is dominant we have 
\begin{align*}
\left\|\Lambda(\tau)+\rho_G\right\|^2\geq \left\|\Lambda(\tau)\right\|^2.
\end{align*}
Thus by  \eqref{lambdaest} we get 
\begin{align*}
&\|\Lambda(\tau)+\rho_G\|^2-\|\Lambda(\nu)+\rho_K\|^2\\&\ge 
\|\Lambda(\tau)\|^2-\|\Lambda(\nu^\prime)\|^2
-2\|\mu+\rho_K\|\cdot \|\Lambda(\nu^\prime)\|-\|\mu+\rho_K\|^2\\
&\ge -2\|\mu+\rho_K\|\cdot\|\Lambda(\tau)\|-\|\mu+\rho_K\|^2.
\end{align*}
There is $C>0$ such that $\|\mu+\rho_K\|\le C$ for all weights $\mu$ of
$\nu_p$.
Hence there is $C_1>0$ such that for all $\tau\in\Rep(G)$ one has
\begin{equation}\label{inequ1}
\|\Lambda(\tau)+\rho_G\|^2-\|\Lambda(\nu)+\rho_K\|^2\ge 
-C_1(\|\Lambda(\tau)\|+1)
\end{equation}
for all $\nu\in\widehat K$ with $[\nu_p(\tau)\colon\nu]\neq0$. Now we apply this
to $\tau_\lambda(m)$. By definition of $\tau_\lambda(m)$ we have 
$\Lambda(\tau_\lambda(m))=m\lambda$.
Using \eqref{inequ1}, \eqref{casimir},
the lemma follows.
\end{proof}
Now we turn to the estimation of $E_{1,p}(\tau_1)$. In this case we have 
either $G_1=\SO^0(p,q)$, $p,q$ odd, or $G=\SL(3,\R)$. We
deal with these
cases separately. 

\subsection{The case $G=\SO^0(p,q)$.} Let $p=2p_1+1$,
$q=2q_1+1$. 
Let $n:=p_1+q_1$. Let $K=\SO(p)\times\SO(q)$
and $\widetilde X=G/K$. We let $\tL$ and $\hL$ be as in section
\ref{secdelta1}. Also the Killing form will be normalized as in this section.
Then we have the following lemma.
\begin{lem}\label{lemconv}
Let $\Lambda\in\hL_C^*$ be given as  $\Lambda=k_1e_1+\dots+k_{n+1}e_{n+1}$,
$k_1\geq k_2\geq \dots \geq k_{n+1}\geq 0$.
Let $\Lambda'\in\hL_\C^*$ belong to the convex hull of the set
$\{w\Lambda,\:w\in W_G\}$ and let 
$\lambda\in i\tL^*$ be given by $\lambda:=\Lambda'|_{\tL}$. Then one has 
\begin{align*}
\left\|\lambda\right\|^2\leq \sum_{i=1}^{n}k_i^2.
\end{align*}
\end{lem}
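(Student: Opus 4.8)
The plan is to exploit the very explicit description of the restriction map $\hL^*_\C\to\tL^*_\C$ in the coordinates $e_1,\dots,e_{n+1}$ fixed in Section \ref{secdelta1}. Recall from there that $\aL=\R H_1$, that $\tL$ is spanned by $H_2,\dots,H_{n+1}$, and that $e_i(H_j)=\delta_{ij}$; in particular $e_1$ vanishes on $\tL$, while $e_2,\dots,e_{n+1}$ restrict to a basis of $i\tL^*$. Since $\hL=\aL\oplus\tL$ with $\aL\subset\pL$ and $\tL\subset\kL$, and $\pL\perp\kL$ for the Killing form, the line $\aL^*=\R e_1$ is orthogonal to $i\tL^*=\operatorname{span}_{\R}(e_2,\dots,e_{n+1})$; hence the restriction $\Lambda'\mapsto\Lambda'|_\tL=\lambda$ is precisely the orthogonal projection $P$ of $i\tL^*\oplus\aL^*$ onto $i\tL^*$, i.e.\ the map forgetting the $e_1$-coordinate. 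Recall also that, with the normalization of Section \ref{secdelta1}, $e_1,\dots,e_{n+1}$ is an orthonormal system, so that $\|\sum_i a_ie_i\|^2=\sum_i a_i^2$ and $\|P\sum_i a_ie_i\|^2=\sum_{i\ge 2}a_i^2$.

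First I would write $\Lambda'$ as a finite convex combination $\Lambda'=\sum_j c_j\,w_j\Lambda$ with $w_j\in W_G$, $c_j\ge 0$ and $\sum_j c_j=1$ (possible since $W_G$ is finite). Applying $P$ and the triangle inequality yields $\|\lambda\|=\|P\Lambda'\|\le\sum_j c_j\,\|P(w_j\Lambda)\|$, so it suffices to bound $\|P(w\Lambda)\|$ for a single $w\in W_G$ by $\bigl(\sum_{i=1}^n k_i^2\bigr)^{1/2}$.

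Here is the key step. The Weyl group $W_G$ of $\Delta(\gL_\C,\hL_\C)\subset\{\pm e_i\pm e_j\}$ acts on $i\tL^*\oplus\aL^*$ by signed permutations of $e_1,\dots,e_{n+1}$, so for each $w\in W_G$ there are a permutation $\sigma$ and signs $\varepsilon_i\in\{\pm1\}$ with $w\Lambda=\sum_{i=1}^{n+1}\varepsilon_ik_{\sigma(i)}e_i$; hence $P(w\Lambda)=\sum_{i=2}^{n+1}\varepsilon_ik_{\sigma(i)}e_i$ and
\[
\|P(w\Lambda)\|^2=\sum_{i=2}^{n+1}k_{\sigma(i)}^2=\sum_{i=1}^{n+1}k_i^2-k_{\sigma(1)}^2\le\sum_{i=1}^{n+1}k_i^2-k_{n+1}^2=\sum_{i=1}^{n}k_i^2 ,
\]
the inequality being exactly where the hypothesis $k_1\ge k_2\ge\dots\ge k_{n+1}\ge 0$ enters: it forces $k_{\sigma(1)}^2\ge k_{n+1}^2$. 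Combining this with the previous paragraph and $\sum_j c_j=1$ gives $\|\lambda\|^2\le\sum_{i=1}^{n}k_i^2$, as claimed. I expect no real obstacle here; the only point needing care is the identification of the restriction to $\tL$ with the orthogonal projection $P$ that discards \emph{exactly one} coordinate of $w\Lambda$, after which the bound is automatic, since that discarded coordinate $\pm k_{\sigma(1)}$ has absolute value at least $k_{n+1}$, the smallest of the $k_i$.
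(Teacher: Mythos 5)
Your proof is correct and follows essentially the same path as the paper's: write $\Lambda'$ as a convex combination of signed permutations of $\Lambda$, observe that restriction to $\tL$ discards exactly the $e_1$-coordinate (an orthogonal projection with respect to the normalized Killing form), apply the triangle inequality, and use that the discarded coordinate is some $\pm k_i$ with $k_i^2\geq k_{n+1}^2$. The only cosmetic difference is that you make the orthogonality of $\aL^*$ and $i\tL^*$ and the projection interpretation fully explicit, which the paper leaves implicit.
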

\begin{proof}
Recall that the Weyl group $W_G$ consist of permutations and even sign changes 
of the $e_1,\dots,e_{n+1}$. Thus there exist 
$\alpha_1,\dots,\alpha_m \in (0,1)$, $\sum_{j=1}^{m}\alpha_j=1$, and for each
$j=1,\dots,m$ 
a $\sigma_j\in S^{n+1}$, the symmetric group, and a sequence
$\epsilon_{j,1},\dots,\epsilon_{j,n+1}\in\{\pm 1 \}$ 
such that 
\begin{align*}
\Lambda'=\sum_{j=1}^{m}\alpha_{j}\left(\sum_{i=1}^{n+1}\epsilon_{j,i}k_ie_{
\sigma_j(i)}\right).
\end{align*}
Thus one has
\begin{align*}
\lambda=\sum_{j=1}^{m}\alpha_j\left(\sum_{\substack{i=1\\ \sigma_j(i)\neq
p_1+1}}^{n+1}\epsilon_{j,i}k_ie_{\sigma_j(i)}\right)
\end{align*}
and so one gets
\begin{align*}
\left\|\lambda\right\|\leq &\sum_{j=1}^{m}\alpha_j\left\|\sum_{\substack{i=1\\
\sigma_j(i)\neq
p_1+1}}^{n+1}\epsilon_{j,i}k_ie_{\sigma_j(i)}\right\|
= \sum_{j=1}^{m}\alpha_j\sqrt{\sum_{\substack{i=1\\
\sigma_j(i)\neq p_1+1}}^{n+1}k_i^2}\\ &\leq \sum_{j=1}^{m}\alpha_j
\sqrt{\sum_{i=1}^{n} k_i^2}=\sqrt{\sum_{i=1}^{n} k_i^2}.
\end{align*}
\end{proof}
Now we let $\Lambda(\tau)\in\hL_\C^*$ be given by 
\begin{align*}
\Lambda(\tau):=\tau_1e_1+\dots+\tau_{n+1}e_{n+1}, \quad \tau_1\geq \tau_2\geq
\dots\geq \tau_{n+1}>0. 
\end{align*}
For $m\in\N$ we let $\tau(m)$ be the representation of $G$ with highest weight
\begin{align*}
\Lambda(\tau(m)):=m\Lambda(\tau).
\end{align*}
Then we have the following proposition.
\begin{prop}\label{lowerbd5}
There exists a constant $C$ such that 
\begin{align*}
E_p(\tau(m))\geq m^2\tau_{n+1}-Cm
\end{align*}
for all $m$.
\end{prop}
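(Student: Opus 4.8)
The plan is to reduce, via the decomposition \eqref{decomp3}, the estimate for $E_p(\tau(m))$ on $\Lambda^p\pL^*\otimes V_{\tau(m)}$ to a lowest-weight-type bound on each $K$-type occurring in $\nu_p(\tau(m))$. Concretely, if $\nu\in\widehat K$ satisfies $[\nu_p(\tau(m))\colon\nu]\neq 0$, then by \eqref{casimir} one has
\[
E_p(\tau(m))\big|_{V_\nu}=\|m\Lambda(\tau)+\rho_G\|^2-\|\Lambda(\nu)+\rho_K\|^2,
\]
so it suffices to show that $\|\Lambda(\nu)+\rho_K\|^2\le \|m\Lambda(\tau)+\rho_G\|^2-m^2\tau_{n+1}+Cm$ uniformly in $\nu$ and $m$. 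The key point is that $\rho_G$ and $\rho_K$ are fixed and $\Lambda(\nu)$ differs from the restriction to $\tL$ of a weight of $\tau(m)$ by a weight $\mu$ of $\Lambda^p\pL^*$, which is bounded independently of $m$; here I use \cite[Proposition 9.72]{Kn2} exactly as in the proof of Lemma \ref{genbd}.

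First I would write $\Lambda(\nu)=\mu+\Lambda(\nu')$ where $\mu$ is a weight of $\nu_p=\Lambda^p\Ad^*_\pL$ (hence $\|\mu\|\le C_0$ for a constant $C_0$ depending only on $\widetilde X$) and $\nu'\in\widehat K$ occurs in $\tau(m)|_K$, so that $\Lambda(\nu')$ is the restriction to $\tL$ of some weight $\Lambda'$ of $\tau(m)$, i.e.\ $\Lambda'$ lies in the convex hull of $\{w\,(m\Lambda(\tau))\colon w\in W_G\}$. Now I apply Lemma \ref{lemconv} with the weight $m\Lambda(\tau)=m\tau_1e_1+\cdots+m\tau_{n+1}e_{n+1}$: it gives
\[
\|\Lambda(\nu')\|^2=\|\Lambda'|_{\tL}\|^2\le m^2\sum_{i=1}^n\tau_i^2.
\]
Combining, $\|\Lambda(\nu)+\rho_K\|\le \|\Lambda(\nu')\|+\|\mu+\rho_K\|\le m\bigl(\sum_{i=1}^n\tau_i^2\bigr)^{1/2}+C_1$ for a constant $C_1$ independent of $m$, hence $\|\Lambda(\nu)+\rho_K\|^2\le m^2\sum_{i=1}^n\tau_i^2+2C_1 m\bigl(\sum_{i=1}^n\tau_i^2\bigr)^{1/2}+C_1^2\le m^2\sum_{i=1}^n\tau_i^2+C_2 m$ for $m\ge 1$.

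On the other side, since $\Lambda(\tau(m))=m\Lambda(\tau)$ is dominant, $\langle m\Lambda(\tau),\rho_G\rangle\ge 0$, so
\[
\|m\Lambda(\tau)+\rho_G\|^2\ge m^2\|\Lambda(\tau)\|^2=m^2\sum_{i=1}^{n+1}\tau_i^2=m^2\sum_{i=1}^n\tau_i^2+m^2\tau_{n+1}^2\ge m^2\sum_{i=1}^n\tau_i^2+m^2\tau_{n+1},
\]
using $\tau_{n+1}\ge 1$ so that $\tau_{n+1}^2\ge\tau_{n+1}$. Subtracting the two displays, for every $K$-type $\nu$ of $\nu_p(\tau(m))$ one gets $E_p(\tau(m))|_{V_\nu}\ge m^2\tau_{n+1}-C_2 m$, and since this holds on every isotypic component the same bound holds on all of $\Lambda^p\pL^*\otimes V_{\tau(m)}$. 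The only mild subtlety — and the step I would be most careful about — is making sure the constant $C_2$ is genuinely uniform in $p$ and in $\nu$; this is fine because the weights $\mu$ of $\Lambda^p\pL^*$ for all $p=0,\dots,d$ form a finite set depending only on $\widetilde X$, so $C_0$, and therefore $C_1$ and $C_2$, can be chosen once and for all. Setting $C:=C_2$ completes the proof.
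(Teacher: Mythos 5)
Your argument is correct and reproduces the paper's proof almost verbatim: you invoke the same three ingredients (Knapp's Proposition 9.72 to split $\Lambda(\nu)=\mu+\Lambda(\nu')$, Hall's convex-hull theorem, and Lemma \ref{lemconv}), and your slightly more conceptual lower bound $\|m\Lambda(\tau)+\rho_G\|^2\ge m^2\|\Lambda(\tau)\|^2$ via dominance is equivalent to the paper's explicit expansion $\sum_j(m\tau_j+n+1-j)^2-\sum_j(n+1-j)^2\ge m^2\sum_j\tau_j^2$. The only slip is cosmetic: your expression $E_p(\tau(m))|_{V_\nu}=\|m\Lambda(\tau)+\rho_G\|^2-\|\Lambda(\nu)+\rho_K\|^2$ omits the constant $-\|\rho_G\|^2+\|\rho_K\|^2$ from \eqref{casimir}, but that constant is absorbed into $Cm$ for $m\ge 1$ and does not affect the conclusion.
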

\begin{proof}
Recall that $\nu_p(\tau(m))=\tau(m)|_{K}\otimes\nu_p$. 
Let $\nu\in\hat{K}$ be such that $\left[\nu_p(\tau(m)):\nu\right]\neq 0$. 
By \cite[Proposition 9.72]{Kn2}, there exists a $\nu^\prime\in\hat{K}$ with 
$\left[\tau(m):\nu^\prime\right]\neq 0$ of 
highest weight $\lambda(\nu^\prime)\in\mathfrak{b}_{\C}^*$ and a
$\mu\in\mathfrak{b}_{\C}^*$
which is a weight of $\nu_p$ such that the highest weight $\lambda(\nu)$ of
$\nu$ is given by $\mu+\lambda(\nu^\prime)$. Now there is a
$\widetilde{\Lambda}\in\hL_{\C}^*$ 
which is a weight of $\tau(m)$ such that
$\lambda(\nu')=\widetilde{\Lambda}|_{\tL}$. By \cite[Theorem 7.41]{Ha}, 
$\widetilde{\Lambda}$ belongs to the convex hull of the Weyl group 
orbit of $\Lambda(\tau(m))$.
Thus applying \eqref{casimir} and Lemma \ref{lemconv}, we obtain constants
$C_{1,2}$ which are
independent of $m$ such that
\begin{align*}
&\nu(\Omega_K)=\left\|\lambda(\nu)+\rho_K\right\|^2-\left\|\rho_K\right\|^2\leq
\left\|\lambda(\nu')\right\|^2+C_1(1+\left\|\lambda(\nu')\right\|)\\
&\leq m^2\left(\sum_{j=1}^{n}\tau_j^2\right)+C_2m.
\end{align*}
One the other hand by \eqref{casimir} we have
\begin{align*}
\tau(m)(\Omega)=&\left\|\Lambda(\tau(m))+\rho_G\right\|^2-
\left\|\rho_G\right\|^2\\ =&\sum_{j=1}^{n+1}(m\tau_{j}+n+1-j)^2-\sum_{j=1}^{n+1}
(n+1-j)^2\geq m^2\sum_{j=1}^{n+1}\tau_j^2.
\end{align*}
This implies the proposition.
\end{proof}

\subsection{The case $G=\SL(3,\R)$.} 
We use the notation of section \ref{secdelta1}. We choose the Cartan subalgebra 
$\hL\subset\gL$, which is defined by \eqref{cartan}. The fundamental weights
$\omega_i\in\hL^*_\C$, $i=1,2$, are given by \eqref{fundwei1}.
Let $\Lambda\in\hL^*_\C$ be a highest weight. For $m\in\N$ let $\tau_\Lambda(m)$
be the irreducible 
representation with highest weight $m\Lambda$.
\begin{prop}\label{CasSLdrei}
Assume that $\Lambda$ satisfies $\Lambda_\theta\neq\Lambda$. Then there exists
$C_\Lambda>0$ such that 
\[
E_p(\tau_\Lambda(m))\ge
\frac{1}{9}m^2-C_\Lambda m
\]
for all $m\in\N$ and $p=0,\dots,5$.
\end{prop}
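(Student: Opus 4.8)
The plan is to run the same argument as in the proof of Proposition~\ref{lowerbd5}, using the decomposition \eqref{bhdecomp1}
\[
E_p(\tau_\Lambda(m))=\bigoplus_{\sigma\in\hat K}m(\sigma)\bigl(\tau_\Lambda(m)(\Omega)-\sigma(\Omega_K)\bigr)\Id_{V_\sigma},
\]
the sum running over the $\sigma$ with $[\nu_p(\tau_\Lambda(m)):\sigma]\neq0$, so that it suffices to bound $\tau_\Lambda(m)(\Omega)-\sigma(\Omega_K)$ from below by $\tfrac19 m^2-C_\Lambda m$ for each such $\sigma$. Write $\Lambda=\tau_1\omega_1+\tau_2\omega_2$ with $\tau_1,\tau_2\in\N_0$; by \eqref{Lambdatheta2} the hypothesis $\Lambda_\theta\neq\Lambda$ means precisely $\tau_1\neq\tau_2$. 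The only ingredient that has to be replaced is Lemma~\ref{lemconv}, by its counterpart for the root system of $\SL(3,\R)$ in the coordinates of Section~\ref{secdelta1}.

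\textbf{The convexity step.} I would first establish: if $\Lambda'\in\hL^*_\C$ lies in the convex hull of $W_\gL\cdot\Lambda$ and $\lambda:=\Lambda'|_\tL$, then $\|\lambda\|^2\le\frac{(\tau_1+\tau_2)^2}{3}$. To see this, transport $W_\gL\cong S_3$ through $\Phi^*$: using \eqref{isom6} one checks that $W_\gL$ permutes the three weights $\varepsilon_1:=\omega_1=\tfrac13 f_1+f_2$, $\varepsilon_2:=\tfrac13 f_1-f_2$, $\varepsilon_3:=-\tfrac23 f_1$ (which satisfy $\varepsilon_1+\varepsilon_2+\varepsilon_3=0$), and that $\Lambda=(\tau_1+\tau_2)\varepsilon_1+\tau_2\varepsilon_2+0\cdot\varepsilon_3$. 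Since $f_1\in\aL^*$, $f_2\in i\tL^*$ and $\langle f_1,f_2\rangle=0$ by \eqref{inProd}, the restriction $|_\tL$ is orthogonal projection onto $\R f_2$; a direct computation with \eqref{inProd} gives, for $v=a_1\varepsilon_1+a_2\varepsilon_2+a_3\varepsilon_3$, that $v|_\tL=(a_1-a_2)f_2$ and $\|v|_\tL\|^2=\tfrac13(a_1-a_2)^2$. As $w$ ranges over $W_\gL$, $(a_1,a_2)$ ranges over ordered pairs of distinct entries of $(\tau_1+\tau_2,\tau_2,0)$, whose largest squared difference is $(\tau_1+\tau_2)^2$; the triangle inequality for $\|\cdot\|$ then extends the bound from the vertices to the whole convex hull.

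\textbf{Assembling the estimate.} For $\sigma$ with $[\nu_p(\tau_\Lambda(m)):\sigma]\neq0$, \cite[Proposition 9.72]{Kn2} provides $\sigma'\in\hat K$ with $[\tau_\Lambda(m)|_K:\sigma']\neq0$ and a weight $\mu$ of $\nu_p$ with $\Lambda(\sigma)=\mu+\Lambda(\sigma')$; as in the proof of Lemma~\ref{genbd}, $\Lambda(\sigma')$ is the restriction to $\tL$ of a weight of $\tau_\Lambda(m)$, hence by \cite[Theorem 7.41]{Ha} lies in the convex hull of $W_\gL\cdot(m\Lambda)$. The convexity step applied to $m\Lambda$ gives $\|\Lambda(\sigma')\|^2\le\frac{m^2(\tau_1+\tau_2)^2}{3}$, and since $\mu$ ranges over the finitely many weights of $\nu_p$, \eqref{casimir} yields $\sigma(\Omega_K)=\|\Lambda(\sigma)+\rho_K\|^2-\|\rho_K\|^2\le\frac{m^2(\tau_1+\tau_2)^2}{3}+C_\Lambda m$ for a suitable $C_\Lambda>0$. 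On the other hand $\tau_\Lambda(m)(\Omega)=\|m\Lambda+\rho_G\|^2-\|\rho_G\|^2\ge m^2\|\Lambda\|^2$ since $\Lambda$ is dominant. Subtracting, and computing with \eqref{inProd} and \eqref{fundwei1} the identity $\|\Lambda\|^2-\frac{(\tau_1+\tau_2)^2}{3}=\frac{(\tau_1-\tau_2)^2}{9}$, we obtain $\tau_\Lambda(m)(\Omega)-\sigma(\Omega_K)\ge\frac{(\tau_1-\tau_2)^2}{9}m^2-C_\Lambda m\ge\frac19 m^2-C_\Lambda m$ because $\tau_1\neq\tau_2$, which is the assertion.

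\textbf{Expected difficulty.} There is no conceptual obstacle; the substance is the explicit bookkeeping in the last two paragraphs. The key point is the arithmetic identity $\|\Lambda\|^2-\tfrac{(\tau_1+\tau_2)^2}{3}=\tfrac{(\tau_1-\tau_2)^2}{9}$: this is exactly the mechanism by which the condition $\Lambda_\theta\neq\Lambda$ forces a strictly positive leading coefficient, and in fact one $\ge\tfrac19$, in the lower bound. The only thing requiring care is keeping track of which constants are absolute (the bounds on $\|\mu+\rho_K\|$ coming from $\nu_p$) and which depend on $\Lambda$ (those coming from $\tau_1+\tau_2$).
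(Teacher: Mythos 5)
Your proposal is correct, but it takes a genuinely different route from the paper's own proof. The paper handles the $\SL(3,\R)$ case by invoking the explicit branching rule from $\GL_3(\C)$ to $\GL_2(\C)$ (\cite[Theorem 8.1.1]{GW}), which gives
$\tau_\Lambda(m)|_{\kL_\C}=\bigoplus_{j=0}^{m\tau_1}\bigoplus_{k=0}^{m\tau_2}\nu_{j+k}$,
so that the largest $K$-highest weight visible is $m(\tau_1+\tau_2)f_2$, and then uses the closed form $\nu_j(\Omega_K)=\tfrac{j^2}{3}+\tfrac23 j$ to bound $\nu(\Omega_K)$. You instead prove an $\SL_3$-analogue of Lemma~\ref{lemconv}: transporting $W_\gL\cong S_3$ through $\Phi^*$ to the fundamental weights $\varepsilon_1,\varepsilon_2,\varepsilon_3$ and observing that restriction to $\tL$ kills the $f_1$-component, which yields the bound $\|\Lambda'|_\tL\|^2\le\tfrac{(\tau_1+\tau_2)^2}{3}$ for $\Lambda'$ in the convex hull of $W_\gL\cdot\Lambda$, and then you feed this into the same machine as Proposition~\ref{lowerbd5}. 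I checked your convexity step ($v|_\tL=(a_1-a_2)f_2$, max of $|a_1-a_2|$ over permutations of $(\tau_1+\tau_2,\tau_2,0)$ is $\tau_1+\tau_2$) and the arithmetic identity $\|\Lambda\|^2-\tfrac{(\tau_1+\tau_2)^2}{3}=\tfrac{(\tau_1-\tau_2)^2}{9}$; both are right. Your approach has the advantage of being structurally uniform with the $\SO^0(p,q)$ case (the same convex-hull-of-Weyl-orbit mechanism as Lemma~\ref{lemconv} and Proposition~\ref{lowerbd5}), while the paper's approach trades generality for a concrete branching rule that makes the maximal $K$-type visible at a glance. Either way the condition $\tau_1\neq\tau_2$ produces the $\ge\tfrac19 m^2$ leading term.
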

\begin{proof}
There exist
$\tau_1,\tau_2\in\N_0$ such that $\Lambda=\tau_1\omega_1+\tau_2\omega_2$.
Note that by \eqref{roots6} and \eqref{isom6} one has 
$\rho_G=f_1+f_2$. Then by \eqref{fundwei1} and \eqref{inProd} we get
\begin{align*}
\tau_\Lambda(m)(\Omega)=&\left\|m\Lambda+\rho_G\right\|^2-\left\|\rho_G
\right\|^2 \\ =&\frac{4(\tau_1^2+\tau_1\tau_2+\tau_2^2)}{9}m^2
+\frac{4(\tau_1+\tau_2)}{3}m.
\end{align*}
Next recall that there is a natural isomorphism $\kL_\C\cong 
{\mathfrak{su}}(2)_\C=\slL(2,\C)$ (see \cite[Sect. 4.9]{Ha}). Furthermore if we 
embed $\slL(2,\C)$ into $\gL_\C$ as an upper left block then $\tL_\C$ is 
isomorphic to a Cartan subalgebra of $\slL(2,\C)$. For $j\in\N$ we let $\nu_j$ 
denote the representation of $\kL_\C$ with highest weight $jf_2$. Then we 
deduce from the branching law from
$\GL_3(\C)$ to $\GL_2(\C)$, \cite[Theorem 8.1.1]{GW}
that 
\begin{align*}
\tau_\Lambda(m)|_{\kL_{\C}}=\bigoplus_{j=0}^{m\tau_1}\bigoplus_{k=0}^{m\tau_2}
\nu_{j+k}.
\end{align*}
If we use 
\begin{align*}
\nu_j(\Omega_K)=\frac{j^2}{3}+\frac{2}{3}j.
\end{align*}
and argue as in the proof of Proposition \ref{lowerbd5}, we obtain a constant
$C$ which is independent of $\tau_1,\tau_2$ and 
$m$  such that for every $\nu\in\hat{K}$ with
$\left[\nu_p(\tau(m)):\nu\right]\neq 0$ for some $p$ one has 
\begin{align*}
\nu(\Omega_K)\leq
\frac{\left(m(\tau_1+\tau_2)+C\right)^2}{3}+\frac{
2\left(m(\tau_1+\tau_2)+C\right)}{3}.
\end{align*}
Thus we obtain a constant $C_\Lambda$ such that for every $m$ and every $p$ one
has 
\begin{align*}
E_p(\tau_\Lambda(m))\ge \frac{(\tau_1-\tau_2)^2}{9}m^2-C_\Lambda m.
\end{align*}
By \eqref{Lambdatheta2} the condition $\Lambda_\theta\neq\Lambda$ is equivalent
to $\tau_1\neq \tau_2$. 
This proves the Proposition.
\end{proof}

Now we can summarize our results. 
\begin{prop}\label{lowerbd6}
Let $\delta(\widetilde X)=1$, $\widetilde{X}$ odd-dimensional. Let
$\lambda\in\hL_\C^*$ be a highest weight
with $\lambda_\theta\neq\lambda$. 
For $m\in\N$ let 
$\tau_\lambda(m)$ be the irreducible representation of $G$ with highest weight 
$m\lambda$. There exist $C_1,C_2>0$ such that
\[
E_p(\tau_\lambda(m))\ge C_1m^2-C_2
\]
for all $p=0,\dots,d$ and $m\in\N$.
\end{prop}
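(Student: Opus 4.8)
The plan is to reduce everything to the three estimates already in hand by means of the product structure. Recall that $\widetilde X=\widetilde X_0\times\widetilde X_1$ with $\delta(\widetilde X_0)=0$ and $\widetilde X_1$ one of $\SO^0(p,q)/\SO(p)\times\SO(q)$ ($p,q$ odd) or $\SL(3,\R)/\SO(3)$, that $\tau_\lambda(m)\cong\tau_{\lambda_0}(m)\otimes\tau_{\lambda_1}(m)$, and that $E_p(\tau_\lambda(m))$ splits as in \eqref{decomp3}. Since for self-adjoint operators $A\ge a\,\Id$ on $V$ and $B\ge b\,\Id$ on $W$ one has $A\otimes\Id+\Id\otimes B\ge(a+b)\,\Id$ on $V\otimes W$, and since a finite orthogonal direct sum of operators each bounded below by $c$ is again bounded below by $c$, it suffices to produce constants $c_0,c_1,C'$, with $c_1>0$, such that $E_{0,r}(\tau_{\lambda_0}(m))\ge -c_0 m$ and $E_{1,s}(\tau_{\lambda_1}(m))\ge c_1 m^2-C'm$ for all $r,s$ and all $m\in\N$.

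For the $\widetilde X_0$-factor I would simply invoke Lemma~\ref{genbd} with $G=G_0$ and highest weight $\lambda_0$, which gives the bound $E_{0,r}(\tau_{\lambda_0}(m))\ge -c_0 m$ with a single constant $c_0$ valid for all $r$ and $m$. For the $\widetilde X_1$-factor, note first that the hypothesis $\lambda_\theta\neq\lambda$ implies $(\lambda_1)_\theta\neq\lambda_1$. If $G_1=\SL(3,\R)$, Proposition~\ref{CasSLdrei} applies directly and gives $E_{1,s}(\tau_{\lambda_1}(m))\ge\frac{1}{9} m^2-C'm$ for all $s$. If $G_1=\SO^0(p,q)$, I would first observe that $E_{1,s}(\tau_{\lambda_1}(m))$ has the same eigenvalues as $E_{1,s}(\tau_{(\lambda_1)_\theta}(m))$: indeed $\tau_1$ and $(\tau_1)_\theta$ agree on $K_1$ (as $\theta|_{K_1}=\Id$) and share the same Casimir eigenvalue (as $\theta(\Omega_1)=\Omega_1$), while $E_{1,s}$, by \eqref{endoi}, depends only on these two data. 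Hence, after replacing $\lambda_1$ by $(\lambda_1)_\theta$ if necessary, one may assume, using \eqref{lambdatheta}, that the last coordinate $\tau_{n+1}$ of $\lambda_1$ is $>0$; then Proposition~\ref{lowerbd5} yields $E_{1,s}(\tau_{\lambda_1}(m))\ge\tau_{n+1}m^2-C'm$ with $\tau_{n+1}>0$, uniformly in $s$.

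Combining the two bounds through the observations of the first paragraph, one gets $E_p(\tau_\lambda(m))\ge c_1 m^2-(c_0+C')m$ for all $p$ and $m$, with $c_1>0$. Completing the square, $c_1 m^2-(c_0+C')m\ge\frac{c_1}{2}m^2-\frac{(c_0+C')^2}{2c_1}$, so the proposition holds with $C_1=c_1/2$ and $C_2=(c_0+C')^2/(2c_1)$. I expect the only step requiring any care to be the normalization in the $\SO^0(p,q)$ case, namely verifying that passing to $(\lambda_1)_\theta$ leaves the spectrum of $E_{1,s}$ unchanged and that $(\lambda_1)_\theta\neq\lambda_1$ indeed forces $\tau_{n+1}\neq0$ (so that, after the reduction, $\tau_{n+1}>0$); everything else is routine assembly of the preceding results.
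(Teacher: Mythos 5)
Your proposal is correct and follows essentially the same route as the paper: split via $\widetilde X=\widetilde X_0\times\widetilde X_1$ and \eqref{decomp3}, apply Lemma \ref{genbd} on the $G_0$-factor and Propositions \ref{lowerbd5} and \ref{CasSLdrei} on the $G_1$-factor, then combine. The one place where you go beyond the paper is the explicit reduction to $\tau_{n+1}>0$ in the $\SO^0(p,q)$ case by replacing $\lambda_1$ with $(\lambda_1)_\theta$ and noting $E_{1,s}$ is insensitive to $\theta$ since it depends on $\tau_1$ only through $\tau_1|_{K_1}$ and $\tau_1(\Omega_1)$ (both $\theta$-invariant); the paper invokes Proposition \ref{lowerbd5} without spelling this out, so your extra sentence is a legitimate tightening of a step the authors left implicit.
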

\begin{proof}
Let $\lambda=\lambda_0+\lambda_1$ with $\lambda_0\in\tL_{0,\C}^*$ and
$\lambda_1\in\hL_{1,\C}^*$ highest weights, and
$(\lambda_1)_\theta\neq\lambda_1$. 
Let $\tau_i(m)$, $i=0,1$, be the 
irreducible representations of $G_i$ with highest weight $m\lambda_i$. Then
$\tau(m)=\tau_0(m)\otimes\tau_1(m)$. 
Let $E_{0,p}(\tau_0(m))$ and $E_{1,p}(\tau_1(m))$ be defined by \eqref{endoi}.
By
Lemma \ref{genbd} there exists $C>0$ such that
\[
E_{0,p}(\tau_0(m))\ge -Cm
\]
for all $p=0,\dots,d$ and $m\in\N$. Furthermore, by Proposition \ref{lowerbd5} 
and Proposition \ref{CasSLdrei} there exist $C_3,C_4>0$ such that
\[
E_{1,p}(\tau_1(m))\ge C_3 m^2-C_4
\]
for all $p=0,\dots,d$ and $m\in\N$. Combined with \eqref{decomp3} the proof
follows.
\end{proof}
\begin{corollary}\label{lowerbd7}
Let the assumptions be as in Proposition $\mathrm{\ref{lowerbd6}}$. There exist 
constants $C_1,C_2>0$ such that
\[
\Delta_p(\tau_\lambda(m))\ge C_1m^2-C_2
\]
for all $p=0,\dots,d$ and $m\in\N$.
\end{corollary}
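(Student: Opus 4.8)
The plan is to read off the estimate directly from the Bochner--Hodge decomposition \eqref{bochhodge1} together with Proposition \ref{lowerbd6}. Recall that on $\Lambda^p(X,E_{\tau_\lambda(m)})$ one has $\Delta_p(\tau_\lambda(m))=\Delta_{\nu_p(\tau_\lambda(m))}+E_p(\tau_\lambda(m))$, where $\Delta_{\nu_p(\tau_\lambda(m))}={\nabla^{\nu_p(\tau_\lambda(m))}}^*\nabla^{\nu_p(\tau_\lambda(m))}$ is the Bochner--Laplace operator attached to the representation $\nu_p(\tau_\lambda(m))$ of $K$, and $E_p(\tau_\lambda(m))$ is the bundle endomorphism defined in \eqref{endom2}. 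First I would observe that $\Delta_{\nu_p(\tau_\lambda(m))}$ is a nonnegative self-adjoint operator, being of the form $\nabla^*\nabla$; hence, as an inequality between self-adjoint operators on $L^2(X,\Lambda^p(E_{\tau_\lambda(m)}))$, one has $\Delta_p(\tau_\lambda(m))\ge E_p(\tau_\lambda(m))$.

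Next I would invoke Proposition \ref{lowerbd6}, which provides constants $C_1,C_2>0$, independent of $p$ and of $m$, such that $E_p(\tau_\lambda(m))\ge C_1m^2-C_2$ as an endomorphism of $\Lambda^p\pL^*\otimes V_{\tau_\lambda(m)}$. Since $E_p(\tau_\lambda(m))$ acts fiberwise, the same lower bound holds for the induced endomorphism of $\Lambda^pT^*(X)\otimes E_{\tau_\lambda(m)}$, and therefore for the operator it defines on sections. Combining this with the previous inequality gives $\Delta_p(\tau_\lambda(m))\ge C_1m^2-C_2$ for all $p=0,\dots,d$ and all $m\in\N$, which is the assertion. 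There is no genuine obstacle here: the substantive work has already been carried out in Propositions \ref{lowerbd5}, \ref{CasSLdrei} and \ref{lowerbd6}, and the corollary is merely the translation of the uniform-in-$p$ endomorphism bound of Proposition \ref{lowerbd6} into a spectral lower bound via the nonnegativity of the Bochner Laplacian. The only point requiring a moment's care is that the constants in Proposition \ref{lowerbd6} are uniform in $p$, which is exactly what is stated there.
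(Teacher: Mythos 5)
Your proposal is correct and follows exactly the paper's own argument: combine the Bochner--Hodge identity \eqref{bochhodge1} with the nonnegativity of $\Delta_{\nu_p(\tau_\lambda(m))}=\nabla^*\nabla$ and the endomorphism bound from Proposition \ref{lowerbd6}. The paper's proof is a two-line version of the same reasoning, so there is nothing further to compare.
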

\begin{proof}
Recall that the Bochner-Laplace operator satisfies $\Delta_{\nu_p(\tau(m))}\ge
0$. 
Hence the corollary follows from \eqref{bochhodge1} and Proposition 
\ref{lowerbd6}.
\end{proof}

\section{Proof of the main results}\label{secmainres}
\setcounter{equation}{0}
First assume that $\delta(\widetilde X)\neq0$. Note that 
$\delta(\widetilde X)=0$ implies that $\dim\widetilde X$ is even. Hence, it
follows from Proposition \ref{vanish1} that $T_X(\tau)=1$ for all 
finite-dimensional irreducible representations of $G$, which proves part (i) of
Theorem \ref{th-main1}.

Now assume that $\delta(\widetilde X)=1$. Let $\hL\subset \gL$ be a fundamental
Cartan subalgebra. 
Let $\lambda\in\hL_\C^*$ be a highest weight with
$\lambda_\theta\neq\lambda$. For $m\in\N$ let $\tau(m)$ be the irreducible
representation of $G$ with highest weight $m\lambda$. Then $\tau(m)\not\cong
\tau(m)_\theta$ for all $m\in\N$. Hence by \cite[Chapter VII, Theorem 6.7]{BW}
we have 
$H^p(X,E_{\tau(m)})=0$ for all $p=0,\dots,d$. Then by \eqref{anator2} we have
\begin{equation}\label{anator5}
\log T_X(\tau(m))=\frac{1}{2}\frac{d}{ds}\left(\frac{1}{\Gamma(s)}
\int_0^\infty t^{s-1}K(t,\tau(m))\,dt\right)\bigg|_{s=0}.
\end{equation}
Since $\tau(m)$ is acyclic and $\dim X$ is odd, $T_X(\tau(m))$ is metric
independent \cite[Corollary 2.7]{Mu2}. 
Especially we can rescale the metric by $\sqrt{m}$ without changing 
$T_X(\tau(m))$. Equivalently we can replace $\Delta_p(\tau(m))$ by $\frac{1}{m}
\Delta_p(\tau(m))$. Using \eqref{anator5} we get
\[
\log T_X(\tau(m))=\frac{1}{2}\frac{d}{ds}\left(\frac{1}{\Gamma(s)}
\int_0^\infty t^{s-1}K\left(\frac{t}{m},\tau(m)\right)\,dt\right)\bigg|_{s=0}.
\]
To continue,  we split the $t$-integral into the integral over $[0,1]$ and
the integral over $[1,\infty)$. This leads to
\begin{equation}\label{splitint}
\begin{split}
\log T_X(\tau(m))=&\frac{1}{2}\frac{d}{ds}\left(\frac{1}{\Gamma(s)}
\int_0^1 t^{s-1}K\left(\frac{t}{m},\tau(m)\right)\,dt\right)\bigg|_{s=0}\\
&+\frac{1}{2}\int_1^\infty t^{-1}K\left(\frac{t}{m},\tau(m)\right)\,dt.
\end{split}
\end{equation}
We first consider the second term on the right hand side. To this end we need
the following lemma.
\begin{lem}\label{Katotr}
Let $h_t^{\tau(m),\:p}$ be defined by \eqref{Defh} and let $H_t^0$ be the heat
kernel of the Laplacian $\widetilde{\Delta}_0$ on $C^\infty(\widetilde{X})$.
There exist $m_0\in\N$ and $C>0$ such that for all $m\geq m_0$, $g\in G$, 
$t\in(0,\infty)$ and $p\in\{0,\dots,d\}$ one has
\begin{align*}
\left|h_t^{\tau(m),\:p}(g)\right|\leq
C\dim(\tau(m))e^{-t\frac{m^2}{2}} H_t^0(g).
\end{align*}
\end{lem}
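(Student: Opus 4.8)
The statement $h_t^{\tau(m),p}(g) = \tr H_t^{\tau(m),p}(g)$, with $H_t^{\tau(m),p}$ the heat kernel on $\widetilde X$ attached to the representation $\nu_p(\tau(m))$ of $K$, so the plan is to bound the pointwise operator norm of $H_t^{\tau(m),p}(g)$ and then pay a factor of $\dim(\Lambda^p\pL^*\otimes V_{\tau(m)})$ for passing to the trace. The starting point is the factorization \eqref{heatfact}, namely $H_t^{\tau(m),p}(g) = e^{-tE_p(\tau(m))}\circ H_t^{\nu_p(\tau(m))}(g)$, together with the domination estimate of Proposition \ref{Kato}, which gives $\|H_t^{\nu_p(\tau(m))}(g)\| \le H_t^0(g)$ for all $g\in G$ and $t>0$. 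Since $E_p(\tau(m))$ is a self-adjoint endomorphism of the finite-dimensional space $\Lambda^p\pL^*\otimes V_{\tau(m)}$ that commutes with $H_t^{\nu_p(\tau(m))}(g)$ (as noted after \eqref{bhdecomp2}), one has $\|H_t^{\tau(m),p}(g)\| \le e^{-tc_p(m)} H_t^0(g)$, where $c_p(m)$ is the smallest eigenvalue of $E_p(\tau(m))$.

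The heart of the matter is then to show that $c_p(m) \ge m^2/2$ for all $m$ large enough, uniformly in $p$. This is exactly where Proposition \ref{lowerbd6} enters: under the hypothesis $\lambda_\theta \ne \lambda$ (which is in force throughout section \ref{seclowbd} and carries over here since $\tau(m)$ has highest weight $m\lambda$), there are constants $C_1, C_2 > 0$, independent of $m$ and $p$, with $E_p(\tau(m)) \ge C_1 m^2 - C_2$. Choosing $m_0$ so large that $C_1 m^2 - C_2 \ge m^2/2$ for all $m \ge m_0$ — which is possible since $C_1 m^2 - C_2 - m^2/2 = (C_1 - \tfrac12)m^2 - C_2 \to +\infty$ provided $C_1 > \tfrac12$; if $C_1 \le \tfrac12$ one simply rescales, or more honestly one should state the lemma with a constant $c$ in place of $\tfrac12$, but since the paper writes $m^2/2$ I will absorb this into the choice of $m_0$ by noting $E_p \ge C_1 m^2 - C_2 \ge \tfrac{m^2}{2}$ once $m_0^2(C_1 - \tfrac12) \ge C_2$, which needs $C_1 > \tfrac12$; this holds after the normalization of the Killing form fixed in section \ref{seclowbd}, or else one weakens $\tfrac12$ to any smaller positive constant — gives $e^{-tc_p(m)} \le e^{-tm^2/2}$ for all such $m$, $t > 0$, $p$.

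Finally, taking the matrix trace in $\End(\Lambda^p\pL^*\otimes V_{\tau(m)})$ costs at most a factor equal to the dimension $\binom{d}{p}\dim(\tau(m))$ of that space, so $|h_t^{\tau(m),p}(g)| \le \|H_t^{\tau(m),p}(g)\| \cdot \binom{d}{p}\dim(\tau(m)) \le \binom{d}{p}\dim(\tau(m)) e^{-tm^2/2} H_t^0(g)$. Setting $C := \max_{0\le p\le d}\binom{d}{p}$ — a constant depending only on $d = \dim X$ — yields the claimed inequality $|h_t^{\tau(m),p}(g)| \le C\dim(\tau(m)) e^{-tm^2/2} H_t^0(g)$ for all $m \ge m_0$, $g \in G$, $t \in (0,\infty)$, and $p \in \{0,\dots,d\}$.

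**Main obstacle.** The only genuinely substantive input is the spectral lower bound $E_p(\tau(m)) \ge C_1 m^2 - C_2$ with $C_1 > 1/2$, i.e. Proposition \ref{lowerbd6} together with a check that the normalization makes the leading constant at least $1/2$; everything else is bookkeeping (the factorization \eqref{heatfact}, semigroup domination from Proposition \ref{Kato}, commutativity of $E_p$ with the heat kernel, and the elementary passage from operator norm to trace). The mild subtlety to watch is simply that the constant $1/2$ in the exponent is not sacred — it is whatever fraction of $C_1$ survives after subtracting the lower-order term, so the role of $m_0$ is precisely to guarantee $C_1 m^2 - C_2 \ge m^2/2$ for $m \ge m_0$, which forces the harmless implicit assumption $C_1 > 1/2$ (otherwise one replaces $m^2/2$ throughout by $\varepsilon m^2$ for suitable $\varepsilon>0$, which is all that is needed in the subsequent application).
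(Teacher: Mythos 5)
Your proof follows the paper's argument exactly: the factorization \eqref{heatfact}, the semigroup domination of Proposition~\ref{Kato}, the spectral lower bound from Proposition~\ref{lowerbd6}, and a factor $\binom{d}{p}\dim(\tau(m))$ for passing from operator norm to trace. You correctly flag a precision issue the paper glosses over: Proposition~\ref{lowerbd6} only supplies $E_p(\tau(m))\ge C_1 m^2-C_2$ with $C_1$ not guaranteed to exceed $1/2$ (indeed, for $G_1=\SL(3,\R)$ Proposition~\ref{CasSLdrei} only gives $C_1\ge 1/9$), so the stated exponent $e^{-tm^2/2}$ should really be $e^{-\varepsilon tm^2}$ for some $\varepsilon>0$ depending on $\lambda$; as you observe, this is harmless for all subsequent uses of the lemma.
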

\begin{proof}
Let $p\in\{0,\dots,n\}$. 
Let $H_t^{\nu_p(\tau(m))}$ be the kernel of
$e^{-t\widetilde{\Delta}_{\nu_p(\tau(m))}}$
and let $H_t^{\tau(m),\:p}$ be the kernel of
$e^{-t\widetilde{\Delta}_p(\tau(m))}$. 
By \eqref{heatfact} we have
\begin{align*}
H_t^{\tau(m),\:p}(g)=e^{-tE_p(\tau(m))}\circ H_t^{\nu_p(\tau(m))}(g).
\end{align*}
Thus by proposition \ref{Kato} and Proposition \ref{lowerbd6} there exists
an $m_0$ such that for $m\geq m_0$ one has
\begin{equation}\label{estim6}
\left\|H_t^{\tau(m),\:p}(g)\right\|\leq
e^{-t\frac{m^2}{2}}H_t^0(g).
\end{equation}
Taking the trace in $\End(\Lambda^p\mathfrak{p}^*\otimes V_{\tau(m)})$ for
every 
$p\in\{0,\dots,d\}$, the lemma follows. 
\end{proof}
Using \eqref{anator3}, \eqref{alter} and Lemma \ref{Katotr}, we obtain
\[
\begin{split}
\left|K\left(\frac{t}{m},\tau(m)\right)\right|
&\leq C
e^{-\frac{m}{2}t}\dim(\tau(m))\int_{\Gamma\bs G}\sum_{\gamma\in\Gamma}
H_{t/m}^0(g^{-1}\gamma g)\,d\dot g\\
&= C e^{-\frac{m}{2}t}\dim(\tau(m))\Tr(e^{-\frac{t}{m}\Delta_0}).
\end{split}
\]
Furthermore, by the heat asymptotic \cite{Gi} we have 
\[
\Tr(e^{-\frac{1}{m}\Delta_0})=C_d\vol(X)m^{d/2}+O\left(m^{(d-1)/2}\right)
\]
as $m\to\infty$. Hence there exists $C_1>0$ such that
\[
\left|K\left(\frac{t}{m},\tau(m)\right)\right|\le C_1 m^{d/2}
\dim(\tau(m))e^{-\frac{m}{2}t},\quad t\ge 1.
\]
Thus we obtain
\[
\left|\int_1^\infty t^{-1}K\left(\frac{t}{m},\tau(m)\right)\;dt\right|\le
C_1 m^{d/2}\dim(\tau(m))e^{-m/4}\int_1^\infty t^{-1}e^{-\frac{m}{4}t}\;dt.
\]
Using Weyl's dimension formula, it follows that
\begin{equation}\label{term2}
\int_1^\infty
t^{-1}K\left(\frac{t}{m},\tau(m)\right)\;dt=O\left(e^{-m/8}\right).
\end{equation}
Now we turn to the first term on the right hand side of \eqref{splitint}. We
need to estimate $K(t,\tau(m))$ for $0<t\le 1$. 
To this end we use \eqref{anator3} to decompose $K(t,\tau(m))$ into the sum
of two terms: The contribution of the identity
\begin{equation}\label{contribid}
I(t,\tau(m)):=\vol(X)k_t^{\tau(m)}(1),
\end{equation}
where $k_t^{\tau(m)}$ is defined by \eqref{alter}, and the remaining term
\begin{align*}
H(t,\tau(m)):=\int_{\Gamma\bs G}\sum_{\substack{\gamma\in\Gamma\\ \gamma\neq
1}}k_t^{\tau(m)}(g^{-1}\gamma g)\,d\dot g
\end{align*}
First we consider $H(t,\tau(m))$.  
Using Proposition \ref{Katotr} and Proposition \ref{esthyp}, it follows that
for every $m\geq m_0$ and every $t\in\left(0,1\right]$ we have
\begin{align*}
\sum_{\substack{\gamma\in\Gamma\\ \gamma\neq
1}}\left|k_{t}^{\tau(m)}(g^{-1}\gamma g)\right|
&\leq C e^{-t\frac{m^2}{2}}\dim(\tau(m))
\sum_{\substack{\gamma\in\Gamma\\ \gamma\neq
1}}H_{t}^{0}(g^{-1}\gamma g)\\ &\leq C_1 \dim(\tau(m))
e^{-t\frac{m^2}{2}}e^{-c_0/t}.
\end{align*}
Hence using Weyl's dimension formula we get
\begin{equation*}
\left| H\left(\frac{t}{m},\tau(m)\right)\right|\le C_2 e^{-c_1m}e^{-c_1/t},
\quad 0<t\le 1.
\end{equation*}
This implies that there is $c_2>0$ such that
\begin{equation}\label{term3}
\begin{split}
&\frac{d}{ds}\left(\frac{1}{\Gamma(s)}\int_0^1 t^{s-1}H\left(\frac{t}{m},\tau(m)
\right)\,dt\right)\bigg|_{s=0} \\ &=\int_0^1 t^{-1}H\left(\frac{t}{m},\tau(m)
\right)\,dt=O\left(e^{-c_2m}\right)
\end{split}
\end{equation}
as $m\to\infty$.

It remains to consider the contribution of the identity $I(t,\tau(m))$. By
Lemma \ref{Katotr} there exists $C>0$ such that for all $m\ge m_0$ and
$p=0,\dots,d$ we have
\[
|h_t^{\tau(m),p}(1)|\le C \dim(\tau(m)) e^{-t\frac{m^2}{2}}H_t^0(1).
\]
Next we estimate $H_t^0(1)$ using the Plancherel-Theorem. Since the function
$H_t^0(1)$ is $K$-biinvariant, the Plancherel-Theorem for
$H_t^0(1)$ reduces to the spherical Plancherel theorem \cite[Theorem 7.5]{He}.
Thus if $Q=MAN$ is 
a fixed minimal standard parabolic subgroup, it follows from \eqref{InfchHS}
that
\begin{align*}
H_t^0(1)=e^{-t\left\|\rho_\aL\right\|^2}\int_{\aL^*}
e^{-t\left\|\nu\right\|^2}\beta(\nu)d\nu,
\end{align*}
where $\beta(\nu)$ is the spherical Plancherel-density.
Thus there  exists $C_1>0$ such that $|H_t^0(1)|\le C_1$
for $t\ge 1$. Hence, by \eqref{alter} we get
\[
|k_t^{\tau(m)}(1)|\le C_2\dim(\tau(m))e^{-t\frac{m^2}{2}}
\]
for $t\ge 1$ and $m\ge m_0$. By \eqref{contribid} and Weyl's dimension formula
 it follows that there exist $C,c>0$ such that
\begin{equation}
\Bigg|I\left(\frac{t}{m},\tau(m)\right)\Bigg|\le C e^{-cm} e^{-ct}
\end{equation}
for $t\ge 1$ and $m\ge m_0$. Hence we get
\begin{equation}
\begin{split}
&\frac{d}{ds}\left(\frac{1}{\Gamma(s)}\int_0^1t^{s-1}I\left(\frac{t}{m},\tau(m)
\right)\,dt\right)\Bigg|_{s=0} \\ &= \frac{d}{ds}\left(\frac{1}{\Gamma(s)}
\int_0^\infty t^{s-1}I\left(\frac{t}{m},\tau(m)\right)\,dt\right)\Bigg|_{s=0}
+O\left(e^{-cm}\right)
\end{split}
\end{equation}
for $m\ge m_0$. Since we are assuming that $\delta(\widetilde X)=1$, 
$\dim(X)$ is odd.
Then it follows from \eqref{asympexp3} and the definition of $k_t^{\tau(m)}$
by \eqref{alter}, that $k_t^{\tau(m)}(1)$ has an asymptotic expansion of the
form
\[
k_t^{\tau(m)}(1)\sim\sum_{j=0}^\infty c_j t^{-d/2+j}.
\]
Since $d$ is odd, the expansion has no constant term. This implies that
\[
\int_0^\infty t^{s-1} I(t,\tau(m))\;dt
\]
is holomorphic at $s=0$. Therefore we get
\begin{equation*}
\begin{split}
&\frac{d}{ds}\left(\frac{1}{\Gamma(s)}
\int_0^\infty t^{s-1}I\left(\frac{t}{m},\tau(m)\right)\,dt\right)\Bigg|_{s=0}
 \\ =&\frac{d}{ds}\left(\frac{1}{\Gamma(s)}
\int_0^\infty t^{s-1}I\left(t,\tau(m)\right)\,dt\right)\Bigg|_{s=0}.
\end{split}
\end{equation*}
By definition, the right hand side equals  $\log T_X^{(2)}(\tau(m))$, 
$T_X^{(2)}(\tau(m))$ the $L^2$-torsion. Combined
with \eqref{splitint}, \eqref{term2} and \eqref{term3} we obtain
\begin{equation}\label{torequ}
\log T_X(\tau(m))=\log T_X^{(2)}(\tau(m))+O\left(e^{-cm}\right).
\end{equation}
This proves Proposition \ref{prop-l2tor1}. 
\hfill \endproof

Combining Proposition \ref{l2torprod} with Proposition \ref{L2spin} and
Proposition \ref{L2SL3}, we obtain Proposition \ref{prop-l2tor2}. 
Together with Proposition \ref{prop-l2tor1} we obtain part (ii) of
Theorem \ref{th-main1}.

Corollary \ref{cor-spin} follows from Proposition \ref{L2spin} and
Corollary \ref{cor-sl3} follows from Proposition \ref{L2SL3}.


\begin{thebibliography}{MMMM}
\bibitem[At]{At} M.F. Atiyah, \emph{Elliptic operators, discrete groups and 
von Neumann algebras}. Colloque "Analyse et Topologie'' en l'Honneur de Henri 
Cartan (Orsay, 1974), pp. 43–72. Asterisque, No. 32-33, Soc. Math. France, 
Paris, 1976, MR0420729, Zbl 0323.58015.  
\bibitem[BM]{Barbasch} D. Barbasch, H. Moscovici, \emph{$L^{2}$-index and the 
trace formula}, J. Funct. An. \textbf{53} (1983), no.2, 151-201., MR0722507, Zbl
0537.58039. 
\bibitem[BMZ1]{BMZ1} J.-M. Bismut, X. Ma, W. Zhang, \emph{Operateurs de 
Toeplitz et torsion analytique asymptotique.}  
C. R. Math. Acad. Sci. Paris 349 (2011), no. 17-18, 977-981, MR2838248, Zbl
1227.58010. 
\bibitem[BMZ2]{BMZ2} J.-M. Bismut, X. Ma, W. Zhang, \emph{Asymptotic torsion and
Toeplitz operators}, Preprint, 2011.
\bibitem[Bor]{Borel} A. Borel, \emph{Compact Clifford-Klein forms of symmetric
spaces}, Topology \textbf{2} (1963), 111-122., MR0447474, Zbl 0116.38603.
\bibitem[Bo]{Bott} R. Bott, \emph{The index theorem for homogeneous differential
operators}, 1965 Differential and Combinatorial Topology, 
(A Symposium in Honor of Marston Morse) pp. 167--186 Princeton Univ. Press,
Princeton, N.J., MR0182022, Zbl 0173.26001. 
\bibitem[BV]{BV} N. Bergeron, A. Venkatesh, \emph{The asymptotic growth 
of torsion homology for arithmetic groups,} 
http://arxiv.org/abs/1004.1083 (2010).
\bibitem[BW]{BW} A. Borel, N. Wallach \emph{Continuous cohomology, discrete 
subgroups, and representations of reductive groups}, Princeton University 
Press, Princeton, 1980, MR0554917, Zbl 0443.22010.
\bibitem[De]{Del} P. Delorme, \emph{Formules limites et formules asymptotiques 
pour les multiplicites dans} $L^2(G/\Gamma)$, Duke Math. J. \textbf{53} (1986), 
no. 3, 691-731,MR0860667, Zbl 0623.22012. 
\bibitem[DL]{DL} H. Donnelly, P. Li, \emph{Lower bounds for the eigenvalues
of Riemannian manifolds}, Michigan math. J. \textbf{29} (1982), 149 - 161,
MR0654476, Zbl 0488.58022. 
\bibitem[Do1]{Do1} H. Donnelly, \emph{Asymptotic expansions for the compact
quotients of properly
discontinuous group actions}, Illinois J. Math. \textbf{23} (1979), no. 3,
485-496, MR0537804, Zbl 0411.53033.
\bibitem[Fr]{Fried} D. Fried, \emph{Analytic torsion and closed geodesics on 
hyperbolic manifolds}, Invent. Math. \textbf{84} (1986), no. 3, 523-540,
MR0837526, Zbl 0411.53033.   
\bibitem[Gi]{Gi} P.B. Gilkey, \emph{Invariance theory, the heat equation, and 
the Atiyah-Singer index theorem}. Second edition. Studies in Advanced 
Mathematics. CRC Press, Boca Raton, FL, 1995, MR1396308, Zbl 0856.58001. 
\bibitem[GW]{GW} R. Goodman, N. Wallach \emph{Representations and
invariants of the classical groups}, Encyclopedia of Mathematics and its
Applications, 68. Cambridge University Press, Cambridge, 1998, MR1606831, Zbl
0901.22001.
\bibitem[Gu]{Gu} B. G\"uneysu, \emph{Kato's inequality and form boundedness
of Kato potentials on arbitrary Riemannian manifolds}, 2011, arXiv:1105.0532v3.
\bibitem[Ha]{Ha} Brian C. Hall, \emph{Lie groups, Lie algebras, and 
representations}. Graduate Texts in Mathematics, \textbf{222}, 
Springer-Verlag, Berlin, New York, 2003, MR1997306, Zbl 1026.22001.
\bibitem[He]{He} S. Helgason, \emph{Groups and geometric analysis. Integral
geometry, invariant differential operators, and spherical functions}, Corrected
reprint of the 1984 original Mathematical Surveys and Monographs, 83. American
Mathematical Society, Providence, RI, 2000, MR1790156, Zbl 0965.43007. 
\bibitem[HC]{HC} Harish-Chandra \emph{Harmonic analysis on real reductive 
groups. III. The Maass-Selberg relations and the Plancherel formul}a. 
Ann. of Math. (2) \textbf{104} (1976), no. 1, 117-201, MR0439994, Zbl
0331.22007. 
\bibitem[Kn1]{Kn1} A. Knapp, \emph{Representation theory of semisimple 
groups}, Princeton University Press, Princeton, 2001, MR1880691, Zbl 0993.22001.
\bibitem[Kn2]{Kn2} A. Knapp, \emph{Lie groups beyond an introduction}, 
Second Edition, Progress in Mathematics, 140. Birkh\"auser Boston, Inc., 
Boston, MA, 2002, MR1920389, Zbl 1075.22501.
\bibitem[Ko]{Kostant} B. Kostant, \emph{Lie algebra cohomology and the 
generalized Borel-Weil theorem}, Annals of Math. (2) \textbf{74} (1961), 
329-378, MR0142696, Zbl 0134.03501.
\bibitem[Lo]{Lo} J. Lott, \emph{Heat kernels on covering spaces and 
topological invariants}. J. Differential Geom. \textbf{35} (1992), no. 2, 
471-510, MR1158345, Zbl 0770.58040. 
\bibitem[LM]{LM} H.B. Lawson, M.-L. Michelsohn, \emph{Spin geometry},
Princeton Mathematical Series, 38. Princeton University Press, Princeton, NJ, 
1989, MR1031992, Zbl 0688.57001.
\bibitem[Man]{Ma} A. Manning, \emph{Topological entropy for geodesic flows}. 
Ann. of Math. (2) \textbf{110} (1979), no. 3, 567-573, MR0554385, Zbl
0426.58016.
\bibitem[MaM]{MaM} S. Marshall, W. M\"uller, \emph{On the torsion in the 
cohomology of arithmetic hyperbolic 3-manifolds}, Preprint 2011, 
arXiv:1103.2262.
\bibitem[Mat]{Mat} V. Mathai, \emph{$L^2$-analytic torsion}, J. Funct. Anal.
\textbf{107} (1992), no. 2, 369-386, MR1172031, Zbl 0756.58047
\bibitem[MtM]{MtM} Y.Matsushima, S. Murakami, \emph{On vector bundle valued 
harmonic forms and automorphic forms on symmetric riemannian manifolds}, 
Ann. of Math. (2) \textbf{78} (1963), 365-416, MR0153028, Zbl 0125.10702.
\bibitem[Mi1]{Mi1} R. J. Miatello, \emph{The Minakshisundaram-Pleijel 
coefficients for the vector-valued heat kernel on compact locally symmetric 
spaces of negative curvature}. Trans. Amer. Math. Soc. \textbf{260} (1980), 
no. 1, 1 - 33, MR0570777, Zbl 0444.58015.
\bibitem[Mi]{Mi} J. Milnor, {\it Whitehead torsion},  Bull. Amer. Math. Soc.  
{\bf 72}  (1966), 358--426, MR0196736, Zbl 0147.23104.  
\bibitem[MS1]{MS1} H. Moscovici, R. Stanton, \emph{Eta invariants of Dirac
operators on locally symmetric manifolds}, Inv. Math. \textbf{95} (1989),
629--666, MR0979370, Zbl 0672.58043.
\bibitem[MS2]{MS2} H. Moscovici, R. Stanton,  \emph{R-torsion and zeta functions
for locally symmetric manifolds}, Inv. Math. \textbf{105} (1991), 185--216,
MR1109626, Zbl
0789.58073. 
\bibitem[Mu1]{Mu1} W. M\"uller, \emph{The trace class conjecture in the theory 
of automorphic forms}. II. Geom. Funct. Anal. \textbf{8} (1998), no. 2, 
315-355, MR1616155, Zbl 1073.1151. 
\bibitem[Mu2]{Mu2} W. M\"uller, \emph{Analytic torsion and R-torsion for 
unimodular representations}, J. Amer. Math. Soc. \textbf{6} (1993), 721-753,
MR1189689, Zbl 0789.58071.
\bibitem[Mu3]{Mu3} W. M\"uller, \emph{The asymptotics of the Ray-Singer 
analytic torsion of hyperbolic 3 manifolds}, Preprint 2010, arXiv:1003.5168,
to appear in: ``Metric and Differential Geometry'', The Jeff Cheeger
Anniversary Volume, Progress in Math. Vol. 297, pp. 317 - 352, Birkh\"auser, 
2012.
\bibitem[MP] {MP} W. M\"uller, J. Pfaff, \emph{The asymptotics of the
Ray-Singer analytic torsion for hyperbolic manifolds}, Intern. Math. Research 
Notices 2012, doi: 10.1093/imrn/rns130.
\bibitem[Ol]{Ol} M. Olbrich, \emph{$L^2$-invariants of locally symmetric 
spaces}. Doc. Math. \textbf{7} (2002), 219--237, MR1938121, Zbl 1029.58019. 
\bibitem[RS]{RS} D.B. Ray, I.M. Singer, \emph{R-torsion and the Laplacian on 
Riemannian manifolds.} Advances in Math. \textbf{7} (1971), 145-210, MR0295381,
Zbl
0239.58014.
\bibitem[Sh]{Sh} M.A. Shubin, \emph{Pseudodifferential operators and spectral 
theory.} Second edition. Springer-Verlag, Berlin, 2001,  MR1852334, Zbl
0980.35180.
\bibitem[Wa1]{Wa0} N. Wallach, \emph{On the Selberg trace formula in the case of
compact quotient}
Bull. Amer. Math. Soc. \textbf{82} (1976), no. 2, 171-195, MR0404533, Zbl
0351.22008. 
\bibitem[Wa2]{Wa1} N. Wallach, \emph{Real reductive groups. I}, Pure and Applied
Mathematics, 132. Academic Press, Inc., Boston, MA, 1988,  MR0929683, Zbl
0666.22002.
\bibitem[Wa3]{Wa2} \emph{Real reductive groups. II}, Pure and Applied
Mathematics, 132-II. Academic Press, Inc., Boston, MA, 1992, MR1170566, Zbl
0785.22001.
\bibitem[Wr]{Wr} G. Warner, \emph{Harmonic analysis on semi-simple Lie 
groups, I}, Die Grundlehren der mathematischen Wissenschaften, Band 
\textbf{188}, Springer-Verlag, New York-Heidelberg, 1972, MR0498999, Zbl
0265.22020.

\end{thebibliography}
\end{document}